\documentclass[11pt]{amsart}

\usepackage{amssymb,latexsym}
\usepackage{amsthm}
\usepackage{amsmath}
\usepackage{amsfonts}
\usepackage{mathrsfs}
\usepackage{graphicx}
\usepackage[all]{xy}
\usepackage{chngcntr}
\usepackage{fancyhdr}
\usepackage[top=1in,bottom=1in,left=1.25in,right=1.25in]{geometry}
\usepackage[usenames,dvipsnames,pdftex]{xcolor}
\usepackage{tikz,ifthen}
\usepackage{color}
\usepackage{pgfplots}
\usepackage{tikz}
\usetikzlibrary{matrix,arrows,decorations.pathmorphing}

\newtheorem{thm}{Theorem}[section]
\newtheorem{lem}[thm]{Lemma}
\newtheorem{prop}[thm]{Proposition}

\newtheorem{rem}[thm]{Remark}
\newtheorem{defn}[thm]{Definition}

\def\Q{\mathbb{Q}}
\def\F{\mathbb{F}}
\def\R{\mathbb{R}}
\def\Z{\mathbb{Z}}
\def\A{\mathbb{A}}
\def\C{\mathbb{C}}
\def\G{\mathbb{G}}

\def\U{\mathcal{U}}

\def\Fl{\mathcal{F}\ell}
\def\Fll{\mathcal{F}\ell^{\times}}

\def\Io{\mathcal{I}wa}

\def\Ol{\mathscr{O}}
\def\Op{\mathscr{O}_{\C_p}}

\def\W{\mathcal{W}}
\def\X{\mathcal{X}}

\def\V{\mathcal{V}}
\def\Vb{\mathscr{V}}
\def\Vc{\mathbf{V}}
\def\tt{\textbf{TI}}
\def\ka{\kappa}
\def\z{\mathfrak{z}}
\def\Y{\mathcal{Y}}
\def\s{\mathfrak{s}}
\def\Igb{\mathcal{I}\mathcal{G}\mathcal{B}}
\def\Fg{\mathscr{F}}
\def\iw{\mathcal{I}\mathcal{W}}
\def\waip{\underline{\omega}_{w,v}^{\ka_{\U},AIP}}

\usepackage[colorlinks, citecolor=blue]{hyperref}

\begin{document}

\title{Perfectoid  unitary Shimura varieties and $p$-adic Eichler-Shimura map I}
\author{Ruishen Zhao}
\date{}
 
\address{Yau Mathematical Science Center\\
	Tsinghua University\\
	  Jingzhai, Haidian District\\
	Beijing 100084, China}
\email{zrs13@tsinghua.org.cn}	
\renewcommand\thefootnote{}

\renewcommand{\thefootnote}{\arabic{footnote}}

\begin{abstract}
We investigate $p$-adic automorphic forms on unitary groups through the geometry of infinite-level unitary Shimura varieties and the Hodge-Tate period map. We first develop a perfectoid construction of overconvergent automorphic forms.  Building on this, we  establish a canonical overconvergent Eichler-Shimura map linking overconvergent cohomology to these $p$-adic automorphic forms. This map induces a comparison between the corresponding coherent sheaves on the eigenvariety, with applications to the study of its geometry and to $p$-adic $L$-functions.
 
\end{abstract}

\maketitle
\setcounter{tocdepth}{2}

\tableofcontents

\section{Introduction}

This series papers  aims to develop a $p$-adic Eichler-Shimura comparison theory for unitary Shimura varieties. The present paper focuses on compact unitary Shimura varieties of signature $(1,n)$ (often referred to as being of Kottwitz-Harris-Taylor type, see \cite{ht2001}). Our  main result is  the construction of a canonical comparison map over the eigenvariety. While this serves as a prototypical case, subsequent paper will further explore higher Coleman theory and address the  general signature $(a, b)$ together with the non-compact setting.

 We begin by briefly recalling some background.

 Shimura varieties are indispensable objects in Langlands program and related topics (e.g. see \cite{ht2001} and \cite{scholze2015torsion}), as their cohomology provides a rich source of Galois representations and automorphic forms. A basic problem is to  compare its etale cohomology and coherent cohomology. Eichler and Shimura established such a comparison for modular curves over $\C$. Later in \cite{faltingschai} Faltings generalized this to locally symmetric spaces via the dual BGG spectral sequence, a far-reaching extension of  complex Hodge theory. Moreover, in his seminal work  \cite{faltings87}, Faltings studied Hodge-Tate structure of modular forms and  established a $p$-adic analogue of Eichler-Shimura decomposition for modular curves over $\Q_p$, which is  essential  in $p$-adic Hodge theory.

On the other hand, since the pioneering contributions of Serre, Katz, Hida, Mazur, and Coleman, the theory of $p$-adic automorphic forms and eigenvarieties has been established and rapidly developed, focusing on the $p$-adic interpolation of classical automorphic forms.   It is now well-understood that both the etale and coherent cohomology of Shimura varieties admit such $p$-adic deformations, as explored in \cite{hansen2017universal} and \cite{AIP2015} respectively. A natural and further topic is to $p$-adically interpolate the Eichler-Shimura comparison maps. We refer to such a theme of $p$-adic family comparison as \textbf{$p$-adic Eichler-Shimura theory}.

Our paper establishes such a result for unitary Shimura varieties.   Let $E/\Q$ be an imaginary quadratic extension. Let  $G/\Q$ be the unitary group of signature of $(1,n)$ defining the Shimura variety $Sh_{G}$ as studied by Harris-Taylor \cite{ht2001}. Fix a prime $p \geq 5$ which splits in $E$ and an isomorphism $G_{\Q_p}\cong GL_{n+1}\times \mathbb{G}_m$.  As the similitude factor $\mathbb{G}_{m}$ plays no role in our analysis, we often view $G_{\Q_p}$ as $GL_{n+1}$ for simplicity. Let $P_{\mu}$ denote the parabolic subgroup of $G(\Q_p)$ associated to the Hodge cocharacter $\mu$ (see section 2.1 in \cite{caraianischolze2017}) and its Levi subgroup $M_{\mu}$ is isomorphic to $GL_1 \times GL_n$. Let $M$ denote the $GL_n$-factor inside $M_{\mu}$. We consider "\textit{reduced}" weight, i.e. characters for $T_{M}$ (maximal split torus of $M$). This restriction is mild as central weight twists for $G(\Q_p)$ won't influence the geometry. Further take  compatible pinning down data $(T,B)$ for $G(\Q_p)$ and $(T_M,B_M)$ for $M(\Q_p)$. Let $Iw$ denote  the corresponding Iwahori subgroup inside $G(\Z_p)$ and $Iw^{+}$ denote the strict Iwahori subgroup (preimage of  $T(\F_p)$ under modulo $p$). Let $X_{Iw^+}$ denote the Shimura variety with strict Iwahori level subgroup at $p$.  For a weight $(R_{\U},\ka_{\U})$ (see section \ref{weight space}), where $R_{\U}$ is a suitable topological ring (e.g. affinoid algebra) and $\ka_{\U}:T_{M}(\Z_p)\rightarrow R_{\U}^{\times}$ is a continuous map, suppose $\ka_{\U}$ is $r$-analytic ($r \in \Q_{>0}$). let $\mathfrak{OC}^{r}_{\ka_{\U},\C_p}$ denote the \textbf{overconvergent cohomology} group (at strict Iwahori level), which is $p$-adic interpolation of classical etale cohomology $H^{n}(X_{Iw^+},\mathbf{V}_{\ka})$. Further take $w \in \Q_{>0}$ and let $M^{\ka_{\U}+n+1}_{Iw^+,w}$ denote the space of $w$-perfectoid automorphic forms (same as a suitable\textbf{ overconvergent automorphic forms}) at level $Iw^+$ with weight $\ka_{\U}+n+1$ (shift $\ka_{\U}$ by $n+1$ due to Kodaira-Spencer isomorphism, see proposition \ref{proposition proetale etale}), which is $p$-adic interpolation of classical automorphic forms (coherent cohomology $H^0(X_{Iw^+},\underline{\omega}^{\ka})$).

Our main result can be summarized as follow (theorem \ref{overconvergent es}, \ref{thm p adic and classical es} and \ref{es over eigenvariety}):

\begin{thm}

There is a Hecke and $Gal_{\Q_p}$-equivariant \textbf{overconvergent Eichler-Shimura map} of weight $\ka_{\U}$ \[ES_{\ka_{\U}}: \mathfrak{OC}^{r}_{\ka_{\U},\C_p} \rightarrow  M^{\ka_{\U}+n+1}_{Iw^+,w}(-n),\] which factors through the classical Eichler-Shimura map at classical weights. This map is  \textbf{functorial} in the  weight $\ka_{\U}$ and glues to a comparison map \[\mathcal{ES}:\mathscr{OC}^{\dag} \rightarrow \mathscr{F}^{\dag}_{Iw^+}(-n)\] between corresponding coherent sheaves over the $n$-dimensional eigenvariety $\mathcal{E}$.

\end{thm}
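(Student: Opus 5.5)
The plan follows the strategy of Chojecki–Hansen–Johansson for modular curves, adapted to the perfectoid unitary Shimura variety $\X_{\infty}$ of Harris–Taylor type and its Hodge–Tate period map $\pi_{HT}:\X_{\infty}\to \Fl$, where $\Fl\cong\mathbb{P}^n$.

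\textbf{Step 1: the Hodge–Tate map gives the comparison.} Over $\X_{\infty}$, the Hodge–Tate filtration of the Tate module of the universal $p$-divisible group yields a surjection
\[
\Z_p^{n+1}\otimes\widehat{\Ol}_{\X_\infty}\twoheadrightarrow \omega^{-1}\otimes\widehat{\Ol}(?),
\]
or dually a map $\widehat{\Ol}^{n}\to \omega\otimes\widehat{\Ol}(1)$ on the $GL_n$-part. Over a strict neighbourhood $\X_{\infty,w}$ of the canonical locus, which is the preimage of a $w$-neighbourhood of the $Iw$-fixed point of $\Fl$, the image of the standard basis vector gives a canonical section $\mathfrak{s}$. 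Evaluating $r$-analytic functions on the relevant $Iw$-orbit at $\mathfrak{s}$ then defines, for each $w\ge r$ large enough, an $Iw^+$-equivariant morphism of sheaves on $X_{Iw^+,w,\mathrm{pro\acute et}}$:
\[
\delta_{\ka_{\U}}:\ \mathbf{D}^{r}_{\ka_{\U}}\otimes\widehat{\Ol}\longrightarrow \underline{\omega}^{\ka_{\U}}_{w}\widehat{\otimes}\widehat{\Ol},
\]
where $\mathbf{D}^r_{\ka_{\U}}$ is the local system of $r$-analytic distributions defining $\mathfrak{OC}^{r}_{\ka_{\U}}$, and the target is the perfectoid sheaf of $w$-automorphic forms. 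The key point to verify is equivariance: $\mathfrak{s}$ transforms under $Iw^+$ through the character $\ka_{\U}$ composed with the Levi projection. This is ensured by the perfectoid construction of overconvergent forms, namely functions on $\pi_{HT}^{-1}(\text{neighbourhood})$ with prescribed $Iw^+$-equivariance.

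\textbf{Step 2: cohomology.} Apply $H^n$ together with the primitive comparison theorem, using that $X_{Iw^+}$ is proper because $G$ is compact. This gives
\[
\mathfrak{OC}^{r}_{\ka_{\U}}\widehat{\otimes}\C_p\to H^{n}_{\mathrm{pro\acute et}}(X_{Iw^+,w},\underline{\omega}^{\ka_{\U}}\widehat{\otimes}\widehat{\Ol}).
\]
Next, compute the target by Cartan–Leray for the pro-étale $Iw^+$-torsor $\X_{\infty,w}\to X_{Iw^+,w}$. The perfectoid affinoid vanishing $H^i(\X_{\infty,w},\widehat{\Ol}^+)\approx 0$ for $i>0$ reduces this to continuous group cohomology. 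Faltings' computation $R^i\nu_*\widehat{\Ol}\cong\Omega^i(-i)$ then gives
\[
H^n_{\mathrm{pro\acute et}}(X_{Iw^+,w},\underline{\omega}^{\ka}\widehat{\otimes}\widehat{\Ol})\to H^0\bigl(X_{Iw^+,w},\underline{\omega}^{\ka}\otimes\Omega^n\bigr)(-n).
\]
This arises via the Leray spectral sequence, whose higher coherent cohomology vanishes on the affinoid $w$-neighbourhood. By the Kodaira–Spencer isomorphism (Proposition \ref{proposition proetale etale}), $\Omega^n\cong\underline{\omega}^{n+1}$, which accounts for the weight shift and the twist in $M^{\ka_{\U}+n+1}_{Iw^+,w}(-n)$.

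Hecke equivariance follows because every step is functorial in the Hecke correspondences; the $U_p$-operators act by the same double cosets on distributions and on the section $\mathfrak{s}$. Galois equivariance holds because all the maps are defined over $\Q_p$ after base change. At a classical weight $\ka$, the map factors through the classical Eichler–Shimura map: the quotient $\mathbf{D}_\ka\to\mathbf{V}_\ka^\vee$ is compatible with $\delta_{\ka}$ restricted to algebraic functions, which recovers Faltings' Hodge–Tate map $\mathbf{V}_\ka\otimes\widehat{\Ol}\to\omega^{\ka}\otimes\widehat{\Ol}$.

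\textbf{Step 3: gluing over the eigenvariety.} Functoriality in $(R_{\U},\ka_{\U})$ holds because $\delta_{\ka_{\U}}$ is compatible with base change $R_{\U}\to R_{\U'}$. Changing $r$ and $w$ is compatible with the restriction maps, and the map commutes with $U_p$. Taking slope $\le h$ parts over slope-adapted affinoids $(\U,h)$, both sides become finite projective modules. By Buzzard's eigenvariety machine, the maps glue to a morphism of coherent sheaves $\mathcal{ES}$ on $\mathcal{E}$.

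\textbf{Main obstacle.} I expect the hardest part to be Step 2 in families: controlling the $\widehat{\Ol}$-cohomology with coefficients in the Banach $R_{\U}$-modules, which requires almost vanishing with uniform bounds and a completed tensor product commuting with cohomology. A secondary difficulty in Step 1 is matching the radii $r$ and $w$ so that $\mathfrak{s}$ lands in the locus where $r$-analytic functions converge. For this I would first try the explicit neighbourhood estimates on $\Fl$ near the $Iw$-fixed point, as in Chojecki–Hansen–Johansson.
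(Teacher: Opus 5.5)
Your global architecture matches the paper's. You build an $Iw^+$-equivariant map between the two ``big'' pro-\'etale sheaves, take invariants, and pass to $H^n$ via $R^n v_*\widehat{\Ol}\cong\Omega^n(-n)$ and Kodaira--Spencer. You then compare with the classical map through $D^r_{\ka}\twoheadrightarrow\Vc_{\ka}$ and glue slope-$\leq h$ pieces. The gap is Step 1, which transplants the $GL_2$ recipe of Chojecki--Hansen--Johansson and does not work in rank $n$.

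\textbf{The frame and the target.} The Hodge--Tate quotient here is $\Ol^{n+1}\twoheadrightarrow\underline{\omega}_\infty$, of rank $n$ with kernel $\underline{L}_\infty$. So you need the whole frame $\s=(\s_1,\dots,\s_n)$ given by the images of $e_1,\dots,e_n$, not a single section. This frame does \emph{not} transform through $\ka_{\U}$ composed with a Levi projection. It satisfies $\gamma^*\s=\s\,J(\z,\gamma)$, where $J(\z,\gamma)=\z\gamma_b+\gamma_d\in M_n$ is a matrix-valued right $1$-cocycle depending on $\z$. Correspondingly, sections of $\widehat{\underline{\omega}}^{\ka_{\U}}_w$ are not scalar functions. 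They are elements of the induced module $C^{w-an}_{\ka_{\U}}(Iw_M,\widehat{\Ol}(\V_\infty)\widehat{\otimes}R_{\U})$, invariant under $f\mapsto\rho_{\ka_{\U}}(J(\z,\gamma))\gamma^*f$. ``Evaluating $r$-analytic functions at $\s$'' produces no function of $\gamma\in Iw_M$. So your $\delta_{\ka_{\U}}$ is not actually defined, and the equivariance you propose to check is the wrong statement. For $n=1$, $M=GL_1$ makes the induction trivial, which is why plain evaluation suffices in the $GL_2$ case.

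\textbf{The missing construction.} The key ingredient is the analytic highest weight vector on $Iw_M^{(w)}$,
\[e_{hwt,\ka_{\U}}(X)=\prod_{i=1}^{n}\ka_{\U,i}(\Delta_i(X))\,\ka_{\U,i+1}(\Delta_i(X))^{-1},\]
where $\Delta_i$ is the leading principal $i\times i$ minor and $\ka_{\U,n+1}:=1$. With it one sets $f_{\mu,\z}(\gamma)=\int_{\tt}e_{hwt,\ka_{\U}}(w_0\gamma^{-1}J(\z,t)w_0)\,d\mu(t)$ and maps $\mu\otimes\delta\mapsto\delta f_{\mu,\z}$. You must then check three things:
\begin{itemize}
\item $t\mapsto e_{hwt,\ka_{\U}}(w_0\gamma^{-1}J(\z,t)w_0)$ lies in $A^r_{\ka_{\U}}(\tt,R_{\U})$, i.e.\ is right $B^{opp}_M(\Z_p)$-equivariant through $w_0(\ka_{\U})$. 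This is why the distributions must be built from the \emph{opposite} Borel. It is also why one works at $Iw^+=Iw\cap Iw^{opp}$: $\X_{\infty,w}$ is only $Iw$-stable, while $D^r_{\ka_{\U}}(\tt,R_{\U})$ is an $Iw^{opp}$-module.
\item $\gamma\mapsto f_{\mu,\z}(\gamma)$ is right $B_M(\Z_p)$-equivariant through $\ka_{\U}^\vee$.
\item $Iw^+$-equivariance holds; it follows from the cocycle identity $J(\z,\alpha t)=J(\z,\alpha)J(\alpha\cdot\z,t)$, not from a character.
\end{itemize}

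\textbf{Hecke equivariance at $p$.} $U_{p,i}$-equivariance does not follow from ``the same double cosets'', since $u_{p,i}\notin Iw^+$. The paper extends both inductions to $Iw_MB_M(\Q_p)$ and $Iw^{opp}B^{opp}(\Q_p)$. Then $u_{p,i}$ acts by left translation with $J(\z,u_{p,i})=u_{p,i,-}$, and the same cocycle computation applies.

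\textbf{Steps 2 and 3.} Once Step 1 is repaired, these go through essentially as in the paper. Only the edge map $H^n_{proet}\to H^0(R^nv_*)$ of the Leray spectral sequence is needed, so no vanishing of higher coherent cohomology on $\X_{Iw^+,w}$ is required. The input you genuinely need is the projection formula for the Banach sheaf $\underline{\omega}^{\ka_{\U}}_w$.
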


\begin{rem}

We remark on other results for $p$-adic Eichler-Shimura theory in the literatures:

(1) In \cite{AIS15} F. Andreatta, A. Iovita and G. Stevens established such $p$-adic family comparison theory for modular curves and recently Andreatta-Iovita upgraded this result into "overconvergent de Rham" (via period ring $\mathbb{B}_{dR}$) setting in \cite{Andreatta-Iovita20}.

(2) In \cite{chj}, P. Chojecki, D. Hansen and C. Johansson developed  such a comparison map for  Shimura curves. Later in \cite{Juan20curve} J. Camargo complemented their result by further relating overconvergent modular symbols with higher Coleman theory.  Moreover, in \cite{drw} H. Diao, G. Rosso and J. Wu generalized Chojecki-Hansen-Johansson's method into $GSp_{2g}$ case  and in \cite{diao2025overconvergent} they further studied higher Coleman theory (by \cite{bp2021higher})  in the setting of $GSp_4$.  The method by Chojecki-Hansen-Johansson is novel. They used infinite level Shimura varieties and pro-etale topology, which provides a new perspective. Their comparison map is more explicit and they can deal with more general weights than \cite{AIS15}, which enables them to get the comparison map over the full eigenvariety.  Our paper is inspired by this method. Compared to $GL_2$ case in \cite{chj}, the higher rank unitary group setting introduces  additional complexity. There are also subtle difference-particularly in the representation theory-between Siegel case in \cite{drw} and our unitary case (see section \ref{classical es} for a brief discussion). In a subsequent  paper of this series I will also further explore higher Coleman theory (as in \cite{diao2025overconvergent}) in our setting.

(3) More recently, Pan Lue's remarkable work \cite{pan2022} introduced geometric Sen theory to establish a different  $p$-adic Eichler-Shimura theory for modular curves. Building on his significant methods,  Boxer, Calegari, Gee and Pilloni subsequently  obtained general results in this direction \cite{bcgp2025}. Nevertheless, their work \textbf{differs} from ours in several key aspects.  First, they use (locally analytic part) completed cohomology whereas   we use overconvergent cohomology via analytic distribution.  Second, their method is \textbf{Lie-theoretic}, relying on Lie algebras and differential operators;  in contrast, our approach is purely \textbf{group-theoretic}. Consequently, their notion of weight is defined on the Lie algebra, while ours is always a character of the group.   An important consequence is that their results require the weight to be $p$-adically non-Liouville (see Definition 2.3.25 of \cite{bcgp2025}), a restriction we do not need. Third,    to obtain $p$-adic family decomposition (e.g split the relevant  spectral sequence),   they impose an ordinarity (or small slope) assumption, whereas in the next paper of this series we plan to work around "nice" points of the eigenvariety, as in \cite{diao2025overconvergent}. Finally, the motivations and applications also differ: their work is aimed at modularity results, while ours is directed toward the study of eigenvarieties and $p$-adic $L$-functions, which aligns more closely with group-theoretic methods. On the other hand, it would be interesting to further explore the connections between these two approaches to $p$-adic Eichler-Shimura theory.

\end{rem}

The proof of the above theorem is divided into two parts, following the strategy in \cite{chj} and \cite{drw} but adapted to the case of unitary Shimura varieties. In the first part we provide  a \textbf{perfectoid construction} of overconvergent automorphic forms, which is of independent interest. In the second part we investigate overconvergent cohomology and overconvergent automorphic forms through \textbf{pro-etale topology}, and explicitly establish a canonical overconvergent Eichler-Shimura map relating them.

Now we introduce the first part. The main ingredient is the infinite level Shimura varieties together with the  Hodge-Tate period map $\pi_{HT}$ introduced in the breakthrough \cite{scholze2015torsion} of Peter Scholze.

 The perfectoid construction   also provides a $p$-adic analogue of  complex analytic automorphic forms. See the nice introduction (in particular table 1.1) of \cite{chj} for more details.  As a warm up, let's recall the \textit{analytic} definition of modular forms of weight $k$:
 
$\bullet$ $f$ is a holomorphic function on the upper half-plane $\mathfrak{h}$ of moderate growth, satisfying the transformation law,  
$f(\frac{az+b}{cz+d})=(cz+d)^k f(z)$ for all $\gamma \in \Gamma$ (arithmetic subgroup for $GL_{2}(\Z)$, e.g. $\Gamma_1(N)$). 

 Equivalently $f$ is also realized in   in $H^0(X_{\Gamma}(\C), \omega^{k})$ ($X_{\Gamma}(\C)$ is the compactified modular curve with level $\Gamma$ and $ \omega^{k}$ is the weight $k$ modular sheaf). To see this equivalence,  consider the complex uniformization $\mathfrak{h} \rightarrow Y_{\Gamma}(\C)$ (the modular curve with level $\Gamma$), the line bundle $\omega^{k}$ can be suitably \textbf{trivialized} over $\mathfrak{h}$ which involves\textbf{ twisted} $\Gamma$-action via automorphic factor $J(z,\gamma)=cz+d$, then $H^0(X_{\Gamma}(\C), \omega^{k})$ will exactly correspond to analytic functions $f$ on $\mathfrak{h}$ satisfying transformation law (ignore the issue of cusps).
 
To establish $p$-adic analogue, we first consider the following diagram

\[\xymatrix{
\X_{\infty} \ar[r]^{\pi_{HT}} \ar[d]_{\pi_{K_p}} & \Fl \\
 \X_{K_p}
},\] where $\X_{K_p}$ is the adic space of unitary Shimura variety with level $K_p$ at $p$ and $\X_{\infty}$ is the infinite level Shimura variety, which is a perfectoid space (see \cite{scholze2015torsion}). The Hodge-Tate period map $\pi_{HT}$ relates the geometry of flag variety to the infinite level Shimura while the natural map $\X_{\infty}\rightarrow \X_{K_p}$ connects the infinite level with finite level.

The \textit{intuition} is that $\pi_{K_p}$ can be seen as a kind of $p$-adic uniformization (analogue of  complex uniformization). However, the $p$-adic picture is more complicated than complex setting. Automorphic bundle can't be trivialized even after pulling back to $\X_{\infty}$. This suggests to  working with suitable subspace of $\X_{\infty}$  where such a trivialization can be achieved--a perspective that fits well  with the theory of overconvergent automorphic forms, which live in a certain neighborhood of the ordinary locus rather than the entire Shimura variety $\X_{Iw}$.  

For $w \in \Q_{>0}$, we will introduce $w$-ordinary locus over $\Fl$, obtain $\X_{\infty,w}$ through pullback via $\pi_{HT}$ and $\X_{Iw,w}$ will be the image under $\pi_{Iw}$. These are the desired subspaces and $\X_{Iw,w}$ will be the definition domain for perfectoid automorphic forms.

More precisely, let $N_{\mu}^{opp}$ denote the unipotent part of the opposite parabolic subgroup $P_{\mu}^{opp}$ and the flag variety $\Fl$ is isomorphic to $P_{\mu}\backslash  G_{\Q_p}$. We further fix a suitable inclusion  \[\A^n \cong N_{\mu}^{opp} \hookrightarrow P_{\mu}\backslash  G_{\Q_p}=\Fl,\] and denote its image (an open subspace) inside $\Fl$ by $\Fl^{\times}$ and use this inclusion to set up coordinates $\mathbf{z}=(\mathbf{z}_1,\cdots,\mathbf{z}_n)$ (view as \textbf{column} vector) over $\Fl^{\times}$. Use the same notation for their corresponding adic space version and we define\[\Fll_{w}:=\{x \in \Fll: \max\limits_{i}\inf\limits_{a \in p \Z_p}{|\textbf{z}_i(x)-a|}\leq p^{-w}\}.\] The locus shrinks as $w$ increases. Through $\pi_{HT}$ and $\pi_{Iw}$ we further obtain $\X_{\infty,w}$ and $\X_{Iw,w}$.

The flag variety $\Fl$ is the projective space $\mathbb{P}^n$ parametrizing lines in $n+1$-dimensional vector space. There is  an exact sequence over the flag variety  \[0\rightarrow \mathscr{L} \rightarrow\mathscr{V} \rightarrow \mathscr{W}^D \rightarrow 0,\] where $\mathscr{L}$ is the tautological line bundle and $\mathscr{V}$ is the $n+1$-dimensional trivial vector bundle. Let $\{e_0,\cdots,e_n\}$ denote the standard basis for $\Q_p^{n+1}$, then $(e_1,\cdots,e_n)$ further produces sections $\mathbf{s}=(\mathbf{s}_1,\cdots,\mathbf{s}_n)$ (view as \textbf{row} vector), for  a matrix $g \in Iw \subset GL_{n+1}(\Z_p)$, suppose $g=\begin{pmatrix} g_a & g_b \\ g_c & g_d \end{pmatrix}$, where $g_a$ is $1 \times 1$ matrix, $g_b$ is $1 \times n$ matrix, $g_c$ is $n \times 1$-matrix and $g_d$ is $n \times n$ matrix. Then under our coordinates, over $\Fl_{w}^{\times}$ we have the following transformation rule (lemma \ref{transform law}) \[g^{*}(\mathbf{s})=\mathbf{s}(\mathbf{z}g_b+g_d).\] It suggests to introduce the analogue of automorphic factor \[J(\mathbf{z},g):=\mathbf{z} g_b+g_d.\] Moreover through pullback to $\X_{\infty,w}$, let $\z$ denote the coordinates and $\s$ denote the corresponding sections for $\underline{\omega}_{\infty}$, we have \[g^{*}(\s)=\s J(\z,g).\]

On the other hand, the $n$-dimensional vector bundle $\underline{\omega}_{\infty}$ is  isomorphic to the pullback via $\pi_{Iw}$ of  the $n$-dimensional Hodge bundle over $\X_{Iw}$ (a special example of theorem 2.1.3 in \cite{caraianischolze2017}).  It suggests to define perfectoid automorphic forms over $\X_{Iw,w}$ as certain analytic functions on $\X_{\infty,w}$ satisfying suitable transform law, which provides a $p$-adic analogue of complex analytic definition of automorphic forms. 

We will denote the sheaf of $w$-overconvergent perfectoid forms with weight $\ka_{\U}$ and level $Iw$ by $\underline{\omega}_{w}^{\ka_{\U}}$. It is  a sheaf of uniform $\C_p$-Banach algebras over $\X_{Iw,w}$. More precisely, let $Iw_M$ denote the Iwahori subgroup for $M$, and for weight $\ka_{\U}=(\ka_{\U,1},\cdots,\ka_{\U,n})$ set $\ka_{\U}^{\vee}= (-\ka_{\U,n},\cdots,-\ka_{\U,1})$. For any affinoid $\V \subset \X_{Iw,w}$ and let $\V_{\infty}$ denote the preimage inside $\X_{\infty,w}$, the section $\underline{\omega}_{w}^{\ka_{\U}}(\V)$ are those functions $f$ in $C_{\ka_{\U}}^{w-an}(Iw_M, \Ol_{\X_{\infty,w}}(\V_{\infty})\widehat{\otimes} R_{\U})$ satisfying \textbf{transformation law} \[g^{*}(f)=\rho_{\ka_{\U}}^{-1}(J(\z,g))(f), \forall g \in Iw.\] Here $C_{\ka_{\U}}^{w-an}(Iw_M, \Ol_{\X_{\infty,w}}(\V_{\infty})\widehat{\otimes} R_{\U})$ is certain $w$-analytic induction  (see section \ref{weight space}) on $Iw_M$, i.e. the right translation via $B_M(\Z_p)$ acts through the character $\ka_{\U}^{\vee}$ and the resulting \textbf{left} representation of $Iw_{M}$ via left translation is denoted by $\rho_{\ka_{\U}}$. In other words,   $C_{\ka_{\U}}^{w-an}(Iw_M, \Ol_{\X_{\infty,w}}(\V_{\infty})\widehat{\otimes} R_{\U})$ consists of functions on $\V_{\infty}$  valued in $C_{\ka_{\U}}^{w-an}(Iw_M,  \C_p\widehat{\otimes} R_{\U})$.

We introduce the \textbf{twisted} $Iw$-action on  $C_{\ka_{\U}}^{w-an}(Iw_M, \Ol_{\X_{\infty,w}}(\V_{\infty})\widehat{\otimes} R_{\U})$ by \[\alpha . f:=\rho_{\ka_{\U}}(J(\z,\alpha))\alpha^{*}(f).\] Equivalently we can view overconvergent automorphic forms  as those functions which are invariant under the twisted $Iw$-action.

The next task is to show that perfectoid automorphic forms are the same as overconvergent automorphic forms in the literature (e.g. \cite{shen2016}).  Previously, Andreatta-Iovita-Pilloni did such construction in their seminal work \cite{AIP2015} for Siegel Shimura varieties. Later  in \cite{shen2016} Xu Shen generalized their construction to  compact unitary Shimura varieties with signature $(1,n)\times (0,n+1)\times...\times(0,n+1)$. We will compare our construction with his result. 

Both constructions are heavy in notations, here I only sketch ideas. See section \ref{section3} for more details. It is recommended to read section \ref{section classical form} first, which provides a toy example for such a comparison via  describing  classical automorphic forms through  $\X_{\infty,w}$.  The \textit{intuition} is:

\textit{Interpret classical theory of canonical subgroup in term of the new framework via} $\pi_{HT}$.

Roughly speaking, in \cite{shen2016} Xu Shen used Hodge height to measure the "\textit{distance}" to ordinarity. For $v \in \Q \cap [0,1]$ he introduced $\X_{Iw}(v)$ (in his notation is $X(v)$ in section 3.2 of \cite{shen2016}), a neighborhood of the multiplicative ordinary locus inside $\X_{Iw}$. Suppose $v < \frac{1}{2p^{m-1}}$ ($m \in \Z_{>1}$) and take $w \in \Q \cap (0,m-v\frac{p^m}{p-1}]$, he constructed a natural map  \[\pi^{AIP}:\iw^+_w \rightarrow \X_{Iw}(v).\]  Then the sheaf of $w$-analytic $v$-overconvergent automorphic forms of weight $\ka_{\U}$ with Iwahori level  is \[\pi_{*}^{AIP}\Ol_{\iw^+_w}[\ka_{\U}^{\vee}],\] which means the subsheaf where $B_{M}(\Z_p)$ acts through the character $\ka_{\U}^{\vee}$.

With the help of (pseudo-)canonical subgroup (i.e. see section 2.3 and section 3.7 of \cite{drw}), we can show that two kinds of locus $\{\X_{Iw,w}\}$ and $\{\X_{Iw}(v)\}$ are equivalent system of neighborhoods of multiplicative ordinary locus. The remaining task to compare two kinds of sheaves. Unlike the classical setting, when $n>1$, the map $\pi_w$ is \textbf{not} a group torsor. The Borel subgroup $B_{M}(\Z_p)$ naturally acts on it but this action is neither transitive nor free. To make the comparison more comprehensive, I further introduced  a kind of "\textit{generalized Igusa torsor}": \[\pi^{AIP,B}: \Igb_w \rightarrow \iw^+_w \rightarrow \X_{Iw}(v).\] It is a group torsor over $\X_{Iw}(v)$ and can  replace the role of $\iw^+_w$ in the construction.

By pulling back to infinite level, we obtain the following Cartesian  diagram 
   
\[
\xymatrix{
\Igb_{\infty,w} \ar[r] \ar[d]_{\pi_{\infty}^{AIP,B}} & \Igb_w \ar[d]_{\pi^{AIP,B}} \\
\X_{\infty}(v) \ar[r] & \X_{Iw}(v)
},
\]
where $\X_{\infty}(v)$ is the subspace of $\X_{\infty}$ via pullback of $\X_{Iw}(v)$. Then the previous constructed section $\s=(\s_1,\cdots,\s_n)$ will further \textbf{trivialize} the group torsor $\Igb_{\infty,w}$. Under this trivialization, the overconvergent automorphic forms  will correspond to $Iw$-invariant of certain space of $w$-analytic induction on $Iw_M$. The $Iw$-action is twisted and the transformation rule \[g^*(\s)=\s J(\z,g)\] shows that this twisted action is exactly the \textbf{same} twisted action of $Iw$ in the construction of perfectoid automorphic forms. Therefore two constructions are equivalent. The proof also demonstrates that our perfectoid construction realizes overconvergent automorphic forms as the $p$-adic analogue of  complex analytic automorphic forms.

One immediate advantage of perfectoid construction is that we can think of overconvergent automorphic forms as $Iw$-invariant of certain "\textit{big}" sheaf. It suggests that we can also try to realize overconvergent cohomology as $Iw$-invariants of some "\textit{big}" sheaf and explore comparison maps between these "\textit{big}" sheaves. This is exactly the strategy of second part. In practice, there is a subtle fact we will work with \textbf{strict Iwahori}   level to establish such comparison map. The reason is that the automorphic factor $J(\z,g)$ suggests to use \textbf{opposite} Borel subgroup of $G(\Q_p)$ in defining analytic distribution and overconvergent cohomology.

To carry out this method, one essential ingredient is the  theory of \textbf{pro-etale topology }established in \cite{scholzepro}. Following the general construction in \cite{hansen2017universal} and \cite{jn2019eigenvariety}, we introduce analytic induction and the etale sheaf $\mathscr{D}_{\ka_{\U}}^{r}$ computing overconvergent cohomology. Then through the completed pullback we obtain the desired "\textit{big}" sheaf $\mathscr{OD}_{\ka_{\U}}^{r}$ over pro-etale site of $\X_{Iw^+,w}$ (analogous open adic space inside strict Iwahori level Shimura variety $\X_{Iw^+}$). Similarly we obtain the pro-etale sheaf $\widehat{\underline{\omega}}_{w}^{\ka_{\U}}$ via completed pullback of $\underline{\omega}_{w}^{\ka_{\U}}$.

Inspired by highest weight theory of classical representation theory (see section \ref{classical es} for more details) we explicitly construct a map \[\mathscr{OD}_{\ka_{\U}}^{r} \rightarrow \widehat{\underline{\omega}}_{w}^{\ka_{\U}},\] which is equivariant under the action of $Iw^+$. Taking $Iw^{+}$-invariants and further apply some standard computations in $p$-adic Hodge theory relating pro-etale cohomology and etale cohomology (i.e. \cite{scholzepro} and generalized projection formula in \cite{drw}), we get the desired $p$-adic Eichler-Shimura map (theorem \ref{overconvergent es}):  \[ES_{\ka_{\U}}: \mathfrak{OC}^{r}_{\ka_{\U},\C_p} \rightarrow  M^{\ka_{\U}+n+1}_{Iw^+,w}(-n).\] It is compatible with classical Eichler-Shimura map at classical weights (theorem \ref{thm p adic and classical es}), thus can be as $p$-adic interpolation of such classical comparison maps. Moreover it is functorial in weights and  can be glued into a comparison map over the eigenvariety $\mathcal{E}$ (theorem \ref{es over eigenvariety}).

The methods developed in this paper can be further generalized to other unitary Shimura varieties. Moreover, having related overconvergent cohomology to overconvergent automorphic forms, we can further study its interaction with higher Coleman theory (as in \cite{diao2025overconvergent} for $GSp_4$). These directions will be explored in subsequent papers of this series.

Our development of a $p$-adic Eichler-Shimura theory for unitary groups is primarily motivated by the study of the geometry of eigenvarieties and the arithmetic  of $p$-adic $L$-functions. In the case of $GSp_4$ case, for instance, such a theory has enabled Diao, Rosso, and Wu to establish results on the ramification locus of the weight map for the eigenvariety (see Corollary 1.2.5 in \cite{diao2025overconvergent}), and has allowed Loeffler and his collaborators to make progress on the Bloch-Kato conjectures (see his ICM 2022 talk \cite{loeffler2022icm}). We aim to explore analogous arithmetic applications for unitary groups in the future.

The paper is organized as follows.

$\bullet$ Section \ref{section2} introduces the basic notations and construct perfectoid automorphic forms via infinite level Shimura varieties and the $\pi_{HT}$ period map.

$\bullet$ Section \ref{section3}  compares this construction with previous constructions of overconvergent automorphic forms and establishes their equivalence.

$\bullet$ Section \ref{section4} introduces the overconvergent cohomology groups. 

$\bullet$ Section \ref{section5}  construct  suitable pro-etale sheaves associated with perfectoid automorphic forms and overconvergent cohomology, and derives the desired overconvergent Eichler-Shimura map.

$\bullet$ Section \ref{section6} glues this map into a comparison map between the corresponding coherent sheaves on the eigenvariety. 
 
$\bullet$ Section \ref{section7} discusses further directions, including generalizations to other unitary Shimura varieties and potential arithmetic applications.

\textbf{Acknowledgements.} I'm particularly grateful to Liang Xiao and Hansheng Diao for their encouragement and discussion. I also thank Wenhan Dai, Hao Fu, Zicheng Qian, Peihang Wu, Ruiqi Bai, Fei chen, Yi Zhu and Yichao Tian  for their help. I thank Kai Xu for his support. I  thank DeepSeek, Gemini and cats for their encouragement.  During June 2025 I gave a talk introducing this work on the conference about arithmetic geometry and representation theory in Harbin Institute of Technology, I'm really appreciated to HIT (in particular Jiandi Zou) for such an invitation and their nice host.

Finally, many ideas were conceived  when I was a postdoc in Morningside Center of Mathematics. The extraordinary environment at MCM helped me overcome lots of difficulties. Therefore I'm intensely grateful for MCM.

\textbf{Notations}

For the convenience of readers, we mainly use similar conventions as in \cite{chj}, \cite{drw}, \cite{shen2016} and \cite{bp2021higher}.

$\bullet$ $p$ is an odd prime with $p>3$ (this is for simplicity as many ideas still hold for small prime).

$\bullet$ We fix an algebraic closure $\overline{\Q_p}$ of $\Q_p$ and an algebraic isomorphism $C_p \cong \C$, where $C_p$ is the $p$-adic completion of $ \overline{\Q_p}$.We normalize the absolute valuation (norm) on $C_p$ so that $|p|=p^{-1}$. For any $w \in \Q>0$, we denote by $p^{w}$ an element in $C_p$ with valuation $p^{-w}$. All constructions and arguments in this paper are independent on these choices and conventions.  We will write $Gal_{\Q_p}$ for the Galois group $Gal( \overline{\Q_p}|\Q_p)$.

$\bullet$ When we use the standard free module $R^{n}$ over a ring $R$, we  work with the standard basis $\{e_0,...,e_{n-1}\}$, where $e_i$ is the \textbf{column vector} with $1$ at $i+1$-th entry and $0$ at other entry. Depending on the context we  also use the re-labeling $\{x_1,\cdots,x_n\}$. Unless otherwise stated, all vectors are assumed to be \textbf{column} vectors and the matrix usually acts on them via left multiplication. The transpose of the matrix $g$ is denoted by $g^{t}$ and the composition of transpose and inverse is $g^{-t}$ for short.

$\bullet$ Let $M_{a,b}(R)$ denote the set of $a \times b$ matrices over a (possibly non-unital) ring $R$. When $a=b$ we simplify the notation to $M_a(R)$.

$\bullet$ Let $n$ be a positive integer. In this paper, we will usually decompose a $(n+1)\times (n+1)$ into blocks in the following way:  we will write \[g= \begin{pmatrix}
g_{a} & g_{b} \\
g_{c} & g_{d}
\end{pmatrix}\] where $g_a$ is a $1\times 1$ matrix, $g_b$ is a $1 \times n$ matrix, $g_c$ is a $n \times 1$ matrix and $g_d$ is a $n\times n$ matrix.

$\bullet$ Unless otherwise stated, symbols in Gothic font (e.g. $\mathfrak{X}$) will represent formal schemes; symbols in calligraphic font (e.g. $\mathcal{X}$) will represent adic spaces; and symbols in script font (e.g. $\mathscr{F}$) will represent other geometric objects like sheaves.

\section{Perfectoid automorphic forms}
\label{section2}

In this section we construct the perfectoid automorphic forms through the infinite level unitary Shimura varieties and the Hodge-Tate period map $\pi_{HT}$.

\subsection{Unitary Shimura varieties and \texorpdfstring{$\pi_{HT}$}{\pi_{HT}}}
\label{unitary shimura}

In this section we introduce the unitary Shimura varieties.  These were studied by Harris-Taylor for proving the local Langlands of $GL_N$ (see \cite{ht2001}). For more details and background knowledge, see section I.7, III.1, III.4 of \cite{ht2001} and section 2 of \cite{shen2016} for a quick summary.

We will mainly follow the notation in Harris-Taylor (\cite{ht2001}) and Xu Shen (\cite{shen2016}), though as we work with imaginary quadratic field instead of general CM field, which will simplify many notations. In fact  with more efforts in notations, our method can be generalized to that setting.

Let $E/\Q$ denote an imaginary quadratic field. Take a prime $p\geq5$ that splits in $E$. The condition $p\geq 5$ is harmless and could be removed with additional technical work (for canonical subgroups).  Let the complex conjugation of $Gal(E/\Q)$ be $c$. Suppose $p$ splits as $\varpi \varpi^c$. Let $\mathbf{B}/E$ denote a central division algebra of dimension $(n+1)^2$ ($n\geq 1$) over $E$ such that

$\bullet$ the opposite algebra $\mathbf{B}^{op}$ is isomorphic to $\mathbf{B} \otimes_{E,c}E$;

$\bullet$ $\mathbf{B}$ splits at $\varpi$;

$\bullet$ For any place $v$ of $E$, if it is not split over $\Q$, $\mathbf{B}_v$ is split; if it is split over $\Q$, either $\mathbf{B}_v$ is split or $\mathbf{B}_v$ is a division algebra;

$\bullet$ if $n+1$ is even then $\frac{n+3}{2}$ is congruent modulo $2$ to the number of places of $\Q$ above which $\mathbf{B}$ is ramified.

Due to \cite{ht2001} I.7 (see p.51), we can choose an involution of second kind $*$ on $\mathbf{B}$. What's more, we can choose some alternating pairing $\langle , \rangle$ on $\mathbf{V} \times \mathbf{V} \longrightarrow \Q$ for the left $\mathbf{B}\otimes_{E} \mathbf{B}^{op}$ module $\mathbf{V}=\mathbf{B}$, which corresponds to another involution of second kind $\sharp$ on $B$. The resulting reductive group $G/\Q$ is defined by \[G(R)=\{(g,\lambda)\in(\mathbf{B}^{op}\otimes_{\Q}R )^{\times} \times R^{\times}| g g^{\sharp}=\lambda Id\},\] here $R$ is any $\Q$-algebra. Let $G_1$ denote the kernel of the similitude map $G\longrightarrow \G_m$, $(g,\lambda)\mapsto \lambda$. Due to lemma I.7.1 of \cite{ht2001}, we can further require this alternating pairing on $\mathbf{V } \times \mathbf{V} \longrightarrow \Q$ satisfy

$\bullet$ If $l$ is a rational prime which is not split in $E$, then $G$ is quasi-split at $l$,

$\bullet$ The unitary group $G_{\R}$ has signature $(1,n)$.

By our conditions at $p$, we have the following isomorphisms over $\Q_p$ : \[G_{\Q_p}\cong (\mathbf{B}^{op}_{\varpi})^{\times}\times \G_m \cong GL_{n+1}\times \G_m.\] Then $G_{1,\Q_p}\cong GL_{n+1}$. As the similitude factor $\G_{m}$ plays no role in later computations, we can ignore it and think $G_{\Q_p}$ just as $GL_{n+1}$ by abuse of notations.


Fix a maximal order $O_{\mathbf{B}_{\varpi}}$ inside $\mathbf{B}_{\varpi}$ and an isomorphism \[O_{\mathbf{B}_{\varpi}}\cong GL_{n+1}(O_{E_\varpi})=GL_{n+1}(\Z_p).\] Let $O_{\mathbf{B}}\subset \mathbf{B}$ be the unique maximal $\Z_{(p)}$-order such that $(O_{\mathbf{B}})^*=O_\mathbf{B}$ and $O_{\mathbf{B},\varpi}=O_{\mathbf{B}_\varpi}$.

Now we turn to describe the unitary Shimura variety associated to the Shimura datum for $G$. Let $K \subset G(\A_f)$ be a small enough (neat) open compact subgroup. Then we have a projective smooth moduli variety $Sh_{K}$ over $E$, which parametrizes abelian varieties with additional PEL (polarization, endomorphism, level structure) type  structure. More precisely, for any connected locally noetherian $E$-scheme $S$, the set $Sh_{K}(S)$ is identified with the set of isomorphism classes $\{(A,\lambda,\iota,\eta)\}/\simeq$, where

$\bullet$ $A/S$ is an abelian scheme of dimension $(n+1)^2$;

$\bullet$ $\lambda: A \longrightarrow A^{\vee}$ is a polarization;

$\bullet$ $\iota: \mathbf{B}\longrightarrow End(A)\otimes \Q$ defines an action with  relations $\lambda \circ \iota(b)=\iota(b^*)^{\vee}\circ \lambda$ for any $b \in \mathbf{B}$ and the pair $(A, \iota)$ is \textit{compatible}  in the sense of Lemma III.1.2 in \cite{ht2001} (or equivalently this action satisfies the Kottwitz condition about $Lie(A)$);

$\bullet$ $\eta$ is a level structure $\eta: \mathbf{V}\otimes \A_f \longrightarrow\mathbf{V}_{f}(A)$ $(mod$ $K)$.

This moduli variety is  a disjoint union of $|ker^{1}(\Q,G)|$ copies of the PEL unitary Shimura variety $Sh_{K}(G,X)$ associated to the corresponding Shimura datum. By abuse of notations, we ignore such difference and  think such moduli space as our Shimura variety.

\begin{rem}

Indeed, as we work with $E/\Q$ an imaginary quadratic field instead of general CM fields, the center of $G$ is just $Res_{E/\Q}\G_m$, which is cohomological trivial (the obstruction $ker^{1}(\Q,G)$ vanishes), we lose nothing.

\end{rem}

 What's more, this paper mainly concerns with level subgroup at $p$ and studies $p$-adic geometry. Thus from now on, we will always suppose $K=K_p \times K^{p}$ and fix the prime to $p$-part $K^p$ (tame level). Similarly we can decompose the level structure $\eta$ as $\eta^{p} \times \eta_p$. Therefore the above data can decomposed into  $(A, \lambda, \iota, \eta^{p})$ and $\eta_p$ (information at $p$). Again in this paper we will focus on  $\eta_p$, which is purely about $A[p^{\infty}]$. Notice that we have the canonical decomposition \[A[p^{\infty}]= A[\varpi^{\infty}] \oplus A[\varpi^{c,\infty}], \] and $A[\varpi^{\infty}]$ is dual to $A[\varpi^{c,\infty}]$. Let $\varepsilon \in M_{n+1}(\Z_p)$ denote the idempotent which  is $1$ at entry $(1,1)$ and $0$ at other entries, then we get a $1$-dimensional $p$-divisible groups (and height is $n+1$) $H=\varepsilon A[\varpi^{\infty}]$ and a decomposition \[\varepsilon A[p^{\infty}]=H \oplus H^{D}.\] We can translate the information $\eta_p$ into level structure information about $H$.

For later use, we further decompose the tame level $K^{p}=K^{S_0}\times K_{S_0}$, where $S_0$ is a finite set of \textit{bad primes} (outside $p$):

$\bullet$ $S_0$ contains $\{2,3\}$.

$\bullet$ For each prime $l \notin S_0 \cup \{p\}$, the field extension $E/\Q$ is unramified at $l$, $G(\Q_l)$ is an unramified $l$-adic group, the level subgroup $K_{l}$ is a hyperspecial subgroup for $G(\Q_l)$.

For later applications, we introduce more notations to explain the moduli interpretations more clearly.

Let $V$ denote the standard free module $\Z_p^{n+1}$ (\textbf{not} the previous module $\mathbf{V}$), with the standard basis $\{e_0,...,e_n\}$, where $e_i$ is the\textbf{ column} vector with $1$ at $i$-th entry and $0$ at other entries. We equip $V_{\Q_p}$ with the standard action (left multiplication) by $G_{\Q_p}=GL_{n+1}(\Q_p)$ (ignore the similitude factor). The standard lattice $V$ determine a hyperspecial subgroup $K_p(0)$, which is isomorphic to $GL_{n+1}(\Z_p))$. Moreover, we fix a pinning down data for $GL_{n+1}$: let $T$ denote the maximal split torus consisting of diagonal matrices and $B$ denote the Borel subgroup consisting of upper triangular matrices. Now let $Iw$ denote the Iwahori subgroup of $GL_{n+1}(\Z_p)$ which is the preimage of $B(\F_p)$ under the modulo $p$ reduction,  and $Iw^{+}$ denote the \textbf{strict} Iwahori subgroup of $GL_{n+1}(\Z_p)$ which is the preimage of $T(\F_p)$ under the modulo $p$ reduction. What's more, there is a standard Levi subgroup $GL_1 \times GL_{n}$ corresponding to  the decomposition $V\cong <e_0> \bigoplus <e_1,...,e_n>$. Let $M$ denote $GL_{n}$ and through the natural embedding $M\hookrightarrow GL_1 \times GL_n$, we view $M$ as a subgroup of  $G_{\Q_p}\cong GL_{n+1}$.  In this paper, by abuse notations, we will think $M$ as the Levi subgroup $GL_1 \times GL_n$. As we will work with "\textit{reduced}" weights (the extra $GL_1$-part action is trivial), such simplification doesn't matter.

We will mainly work with three level subgroups: $K_p(0)$ (hyperspecial), $Iw$ (Iwahori) and $Iw^{+}$ (strict Iwahori). Let $\Gamma \in \{Iw, Iw^{+}\}$, and let $X_{\Gamma}$ denote the corresponding Shimura variety over $\C_p$ with level $\Gamma$ at $p$. For the hyperspecial subgroup $K_p(0)$, we will omit the level and write $X$ for the Shimura variety. And we use $\mathcal{X}$ etc for their associated adic spaces over $Spa(\C_p,\mathscr{O}_{\C_p})$.

\begin{rem}
In the first part (perfectoid construction), we mainly work with Iwahori level $Iw$ and all arguments work directly for $Iw^+$. The automorphic factor (lemma \ref{transform law}) suggests to use opposite Borel subgroup when establishing $p$-adic Eichler-Shimura map. Therefore in the second part we will work with strict Iwahori level to construct such a comparison map. 
\end{rem}

We recall the moduli information about level structure at $p$:

$\bullet$ For Iwahori level, each point further record a full filtration $Fil_{\bullet}H[p]$ for $H[p]$;

$\bullet$ For strict Iwahori level, each point further record a decomposition of $H[p]$ thus a splitting  filtration $Fil_{\bullet}H[p]$.

Then through the forgetful map, we get natural finite etale maps:

\[X_{Iw}\longrightarrow X_{Iw^+}\longrightarrow X,\]

and similarly for the adic spaces version \[\mathcal{X}_{Iw}\longrightarrow \mathcal{ X}_{Iw^+}\longrightarrow \mathcal{X}.\]

Moreover, we can also discuss infinite level Shimura varieties in the framework of adic spaces. Define the infinite level Shimura variety \[\X_{\infty}:=\varprojlim_{K_{p}}\X_{K_p},\] such limits exits in the category of adic spaces (but not coming from an algebraic variety), and in fact it is a perfectoid space (see \cite{scholze2015torsion} for the proof). The space $\X_{\infty}$ also has certain moduli interpretations. Each $Spa(\C_p,\mathscr{O}_{\C_p})$ point $x$ corresponds to a tuple $(A,\iota,\lambda, \eta^{p})$ (tame PEL data) and an isomorphism (trivialization) $\eta_p:\Z_{p}^{n+1}\cong T_p(H)$ (level at $p$). Moreover, the group $G(\Q_p)$ acts on $\X_{\infty}$ from \textbf{right} and only influences information at $p$. We can describe such right action as follow (in terms of moduli interpretation): Let  $g\in G(\Q_p)$ and $x$ be a $Spa(\C_p,\mathscr{O}_{\C_p})$ point, the action is \[x\mapsto g \cdot x;\] \[(A,\lambda,\iota,\eta^p;\eta_p)\mapsto (A,\lambda,\iota,\eta^p;\eta_p \circ g).\]

Since the seminal work of Scholze in \cite{scholze2015torsion}, the perfectoid (infinite level) Shimura varieties have played increasingly important role in numerous areas of modern number theory, representation theory and arithmetic geometry. One key ingredient is that it has the Hodge-Tate period map $\pi_{HT}$ to flag varieties. This $\pi_{HT}$ is fundamental and relates $p$-adic geometry of flag varieties and perfectoid Shimura varieties.

Now we describe the period map $\pi_{HT}$ in term of $Spa(\C_p,\mathscr{O}_{\C_p})$ points.

Recall our moduli interpretation, each $Spa(\C_p,\mathscr{O}_{\C_p})$ point $x$ for $\X_{\infty}$ corresponds to $(A,\lambda,\iota,\eta^p;\eta_p)$, where $\eta_p: \Z_p^{n+1} \cong T_p(H)$. For the $p$-divisible group $H$, we have the following basic exact sequence in Hodge-Tate period map:

\[0\longrightarrow Lie(H)\longrightarrow T_p(H)\otimes_{\Z_p} \C_p\xrightarrow{HT_{H}} \omega_{H^D}\longrightarrow 0,\]  then the global map $\pi_{HT}$ sends $x$ to $\eta_p^{-1}(Lie(H))$, which is a line of $\C_p^{n+1}=V\otimes_{\Z_p} \C_p$ and corresponds to a point in the projective space $\mathbb{P}(V)$. In this way we define the period map:\[\pi_{HT}:\X_{\infty}\longrightarrow \Fl=\mathbb{P}(V)=\mathbb{P}^{n}.\]

This map $\pi_{HT}$ is $G(\Q_p)$-equivariant. The group $G(\Q_p)$ acts on both sides from \textbf{right}. Previously we have seen  its action on $\X_{\infty}$. Here we briefly describe the action on $\Fl$. The geometry of the flag variety will be examined in more detail in the following section. The group $G(\Q_p)\cong GL_{n+1}(\Q_p)$ has a natural \textbf{left} action on $\Q_p^{n+1}$ through left multiplication. Now we define the following  \textbf{right} action: for any $g \in GL_{n+1}(\Q_p)$ and $v \in \Q_p^{n+1}$, \[g * v:=g^{-1}v.\] This \textbf{right} action $*$ further induces a natural $G(\Q_p)$ \textbf{right} action on the flag variety $\Fl$.

To conclude this section, we outline the fundamental strategy throughout this paper.

The starting point is the following diagram \[\xymatrix{
\X_{\infty} \ar[r]^{\pi_{HT}} \ar[d]_{\pi_{K_p}} & \Fl \\
 \X_{K_p}
}.\]

The basic framework is to utilize  $\pi_{HT}$ to relate the geometry of the flag variety $\Fl$ with the infinite level Shimura variety $\X_{\infty}$, and  the natural projection map $\X_{\infty}\rightarrow \X_{K_p}$ enables us to explore finite level Shimura varieties. In particular we will investigate the following natural maps
\[\pi_{Iw}: \X_{\infty}\rightarrow \X_{Iw}, \pi_{Iw^+}: \X_{\infty}\rightarrow \X_{Iw^+}.\]

As a toy example, we record the following relations (lemma 2.2.4 in \cite{drw}):

\begin{lem} We have 
\[\Ol_{\X_{Iw}}^+ = (\pi_{Iw,*}\Ol_{\X_{\infty}}^+)^{Iw}, \Ol_{\X_{Iw}} = (\pi_{Iw,*}\Ol_{\X_{\infty}})^{Iw}.\]
\end{lem}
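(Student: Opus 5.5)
The plan is to reduce to the case of finite Galois covers and then pass to the limit using the perfectoid structure of $\X_{\infty}$. Let $K_m \subset Iw$ be the principal congruence subgroups $1+p^m M_{n+1}(\Z_p)$ for $m\geq 1$. Each $K_m$ is normal in $Iw$. The transition map $\pi_m:\X_{K_m}\to\X_{Iw}$ is finite étale and Galois with group $Iw/K_m$; this follows from the moduli description, since $Iw$ acts on full level structures $\eta_p$ modulo $K_m$ simply transitively over a fixed Iwahori filtration. For a finite étale Galois cover, étale descent for the structure sheaf gives
\[\Ol_{\X_{Iw}}=(\pi_{m,*}\Ol_{\X_{K_m}})^{Iw/K_m}.\]
The same holds for $\Ol^+$, because $\Ol^+$ is the integral closure of $\Ol^+_{\X_{Iw}}$ in $\Ol$ upstairs, and invariants of integral elements that lie downstairs are integral downstairs; here we use that $\X_{Iw}$ is smooth, hence normal.

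Next, work locally. Take an affinoid open $\mathcal{V}=\mathrm{Spa}(R,R^+)\subset\X_{Iw}$, with preimages $\mathcal{V}_m=\mathrm{Spa}(R_m,R_m^+)$ and $\mathcal{V}_\infty=\varprojlim \mathcal{V}_m$. By \cite{scholze2015torsion}, $\mathcal{V}_\infty$ is affinoid perfectoid, say $\mathrm{Spa}(R_\infty,R_\infty^+)$, where $R_\infty^+$ is the $p$-adic completion of $\varinjlim R_m^+$ up to almost mathematics. More precisely, one may need to use $\Ol^{+a}$ first and then pass to the honest statement. So $\Ol^+_{\X_\infty}(\mathcal{V}_\infty)=R_\infty^+$. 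We have to show $(R_\infty^+)^{Iw}=R^+$, and then invert $p$ to get the statement for $\Ol$, since invariants commute with inverting $p$ on $p$-torsion-free modules.

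Take $f\in R_\infty^+$ invariant under $Iw$. For each $k$, choose $f_k\in R_{m}^+$ for some $m=m(k)$ with $f\equiv f_k \bmod p^k$. Apply the normalized trace $\frac{1}{[Iw:K_m]}\mathrm{Tr}_{R_m/R}$. Because $f$ is invariant, this gives an element of $R$ congruent to $f$ modulo $p^{k}$ divided by the denominator, which is bad since $[Iw:K_m]$ is a power of $p$. So instead use invariance directly: $f_k$ is $Iw$-invariant modulo $p^k$ in $R_m^+$.

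The main obstacle is controlling the almost-invariants. It amounts to vanishing, up to bounded $p$-power torsion, of $H^1(Iw/K_m,R_m^+)$ uniformly in $m$. I would instead cite the general fact for pro-étale Galois towers with perfectoid limit, namely Scholze's almost purity / the pro-étale computation \cite{scholzepro}, Lemma 4.10 and Prop. 4.8. These give $\widehat{\Ol}^+$ on the pro-étale site with $H^0(\mathcal{V}_\infty/\mathcal{V},\widehat{\Ol}^+)$ equal to continuous $Iw$-invariants of $R_\infty^+$, and this coincides with $\widehat{\Ol}^+(\mathcal{V})$. Finally, $\widehat{\Ol}^+(\mathcal{V})=R^+$ for the affinoid $\mathcal{V}$, since $\Ol^+_{\X_{Iw}}$ is already $p$-adically complete. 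This yields $(R_\infty^+)^{Iw}=R^+$. Gluing over affinoids gives the sheaf statement, and inverting $p$ gives the $\Ol$ version. This matches the argument of lemma 2.2.4 in \cite{drw}, which I would follow verbatim, replacing $GSp_{2g}$ by $GL_{n+1}$.
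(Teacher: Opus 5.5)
\textbf{Gap: the step $\widehat{\Ol}^+(\mathcal{V})=R^+$.} The paper gives no argument for this lemma beyond citing Lemma 2.2.4 of \cite{drw}. Your final strategy is the standard route, and its first half is fine: view $\mathcal{V}_\infty\to\mathcal{V}$ as a pro-\'etale $Iw$-torsor and use the sheaf property of $\widehat{\Ol}^+$ on $\X_{Iw,proet}$ to get $\widehat{\Ol}^+(\mathcal{V})=(R_\infty^+)^{Iw}$. But once that is done, the lemma is equivalent to the identity $\widehat{\Ol}^+(\mathcal{V})=R^+$. That is exactly where your proposal has a gap.

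The reason you give, that $\Ol^+_{\X_{Iw}}$ is already $p$-adically complete, does not imply this identity. For an affinoid $\mathcal{V}$ one has $\widehat{\Ol}^+(\mathcal{V})=\varprojlim_k H^0(\mathcal{V}_{et},\Ol^+/p^k)$. The quotient of this by $R^+=\varprojlim_k R^+/p^k$ is $T_pH^1(\mathcal{V}_{et},\Ol^+)$, and completeness of $R^+$ says nothing about that group. For instance, take the cusp $\mathrm{Spa}(\C_p\langle t^2,t^3\rangle,\Op\langle t^2,t^3\rangle)$. Use the local section $0$ on $\{|t^2|\le|p|^{2k}\}$ and $t^3/t^2$ on $\{|t^2|\ge|p|^{2k}\}$. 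These agree modulo $p^k$ on the overlap and are compatible in $k$. So they define an element of $\widehat{\Ol}^+$ which equals $t$ away from the singular point and is not in $R^+$, even though $R^+$ is $p$-adically complete.

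\textbf{How to close it.} This step needs a genuine geometric input: the smoothness of $\X_{Iw}$. Let $\nu:\X_{Iw,proet}\to\X_{Iw,et}$ be the projection. Scholze's theorem gives $\nu_*\widehat{\Ol}_X=\Ol_{X_{et}}$ for smooth $X$ over $\C_p$. This is the case $i=0$ of $R^i\nu_*\widehat{\Ol}_X\cong\Omega^i_X(-i)$, which the paper itself uses in Proposition~\ref{proposition proetale etale}. Its proof is a real computation, namely continuous $\Z_p(1)^d$-cohomology on perfectoid toric covers. With it, $\widehat{\Ol}(\mathcal{V})=R$, hence $\widehat{\Ol}^+(\mathcal{V})=\{f\in R:|f|\le 1\}=R^+$, and your argument goes through for both $\Ol$ and $\Ol^+$.

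You invoked smoothness at the finite-level Galois descent, where it is not needed: $R_m^+$ is the integral closure of $R^+$ in $R_m$, and $R^+$ is integrally closed in $R$ by definition. You did not invoke it at the one step where it is essential. The finite-level discussion and the abandoned trace argument play no role in the final proof and can be dropped.
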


\subsection{Flag varieties and representation theory}
\label{flag variety}
In this section we set up basic notations for flag varieties, vector bundles on it and related weights in representation theory.

Recall that previously we have defined the \textbf{right} action $*$ of $G(\Q_p)$ on $\Q_p^{n+1}$ and the flag variety $\Fl=\mathbb{P}^n$. From now on, for simplicity we will also use $\cdot$ to denote this action. And by abuse notation we also write $G$ instead of $G(\Q_p)$ for short. Let $P$ denote the standard parabolic subgroup corresponding to the Levi subgroup $GL_1 \times GL_{n}$. It is exactly the parabolic subgroup $P_{\mu}$ defined by the Hodge cocharacter $[\mu]$ (in Shimura datum), see section 2.1 of \cite{caraianischolze2017} for more details. More explicitly, the parabolic subgroup $P$  consists of matrices in the following form  \[p=\begin{pmatrix}p_{a} & p_{b}\\ 0 & p_{d} \end{pmatrix}. \] Here we are using the \textbf{convention} that  $p_{a}$ is  $1\times 1$ matrix, $p_b$ is a $1 \times n$ matrix and $p_{d}$ is a $n \times n$ matrix.  Now  consider the following map \[G \rightarrow \Fl,\] \[g \mapsto g([e_0]).\] Here $[e_0]$ denotes the line spanned by $e_0$. It induces the following isomorphism \[P\backslash G \cong \Fl.\] Moreover, let $G$ act on left side via right multiplication (which is \textbf{right} action), this isomorphism is also $G$-equivariant.

Now we introduce some notations about \textbf{coordinates} and in particular $w$-ordinary locus for the flag variety.

There is Bruhat stratification on the flag variety. In particular, there is an open strata which is isomorphic to the affine space $\A^{n}$.   More explicitly, let $N_{\mu}^{opp}$ denote the unipotent subgroup for the opposite parabolic subgroup $P_{\mu}^{opp}$, there is an open embedding \[N_{\mu}^{opp}\hookrightarrow P\backslash G \cong \Fl, \] and the image is exactly the open strata. We denote this open strata by $\Fl^{\times}$ (follow the notation  in section 2.3 of \cite{drw}). Through the isomorphism \[\A^{n} \cong N_{\mu}^{opp},\] \[(z_1,...,z_n)\mapsto
\begin{pmatrix}
1   \\
z_1 & 1  \\
... & ... & ...  \\
z_n & ... & ... & 1\\
\end{pmatrix},\] we get coordinates $\textbf{z} \in \mathscr{O}_{\Fl^{\times}}(\Fl^{\times})^n$ for $\Fl^{\times}$. And we will also think $\textbf{z}=(\textbf{z}_1,...,\textbf{z}_n)$ as \textbf{column} vector. This coordinate differs a little from  inhomogeneous coordinate from $V$. For a point $x$ corresponding to $[1:x_1:...:x_n]$, its coordinate $\textbf{z}(x)$ is the following \[\textbf{z}(x)=(-x_1,...,-x_n).\]  In this paper we will always use this coordinate $\textbf{z}$. This convention will simplify our notations.

As we mentioned in the introduction, the overconvergent automorphic forms live in certain subset (neighborhood of ordinary locus) instead   of  the whole Shimura variety $\X_{K_p}$. And in this paper, we will mainly focus certain locus inside the open strata  $\Fl^{\times}$. In next paper of this series, we will discuss higher Coleman theory (see \cite{bp2021higher}) and study the whole $\Fl$.

From now on, we consider the associated adic space for $\Fl$ and $\Fl^{\times}$ and base change to $Spa(\C_p,\mathscr{O}_{\C_p})$. For simplicity we still use the notation $\Fl$, $\Fl^{\times}$ to denote the resulting adic spaces and use $\textbf{z}$ to denote the coordinates.

For each $w \in \Q_{>0}$, consider the following open adic subspace $\Fll_{w}$ inside $\Fll$ \[\Fll_{w}:=\{x \in \Fll: \max\limits_{i}\inf\limits_{a \in p \Z_p}{|\textbf{z}_i(x)-a|}\leq p^{-w}\}.\] It is $w$-ordinary locus over the flag variety. In section \ref{w-locus} we will introduce related $w$-locus over Shimura varieties. Here $w$ measures the "\textit{distance}" to ordinarity. When $w$ grows, the $w$-locus shrinks and becomes closer to ordinary locus.  Here   we briefly mention the following standard fact justifying the name (see \cite{scholze2015torsion} III.1 for more details):

$\bullet$ Let $A$ be an abelian variety over $\C_p$. Then its reduction is ordinary if and only if $Lie(A)$ is a $\Q_p$-rational subspace of $T_p(A)\otimes \C_p$.

The following lemma shows that $\Fll_w$ is stable under the Iwahori subgroup $Iw$ (right) action:

\begin{lem}
The adic space $\Fll_w$ is stable under the right action by $Iw$. More explicitly, the action is described as follow:

\[\Fll_{w}\times Iw \rightarrow \Fll_{w},\] \[\textbf{z},g=\begin{pmatrix}g_a & g_b \\ g_c & g_d \end{pmatrix} \mapsto (\textbf{z}g_b+g_d)^{-1}(\textbf{z}g_a+g_c) .\]

\end{lem}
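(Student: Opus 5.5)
The plan is to work with the coset description $P\backslash G\cong\Fl$ from the beginning of this subsection, on which $G$ acts by right multiplication. Under the embedding $N_\mu^{opp}\hookrightarrow P\backslash G$, a point $x\in\Fll$ with coordinate $\textbf{z}$ is the coset $P\,n(\textbf{z})$, where $n(\textbf{z})=\begin{pmatrix}1&0\\ \textbf{z}&1\end{pmatrix}$ in the block convention. Because the action is literally right multiplication on cosets, the action axioms are automatic. What remains is (i) an explicit formula for the $\textbf{z}$-coordinate of $P\,n(\textbf{z})g$, and (ii) a check that this point again lies in $\Fll_w$ whenever $g\in Iw$.

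For (i), I compute
\[
n(\textbf{z})g=\begin{pmatrix} g_a & g_b\\ \textbf{z}g_a+g_c & \textbf{z}g_b+g_d\end{pmatrix}
\]
and try to write it as $\begin{pmatrix} a& b\\ 0& d\end{pmatrix}\begin{pmatrix}1&0\\ \textbf{z}'&1\end{pmatrix}=\begin{pmatrix} a+b\textbf{z}'& b\\ d\textbf{z}'& d\end{pmatrix}$. Comparing blocks forces $d=\textbf{z}g_b+g_d$ and $\textbf{z}'=(\textbf{z}g_b+g_d)^{-1}(\textbf{z}g_a+g_c)$. The remaining blocks are then $b=g_b$ and $a=g_a-g_b\textbf{z}'$. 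This factorization exists exactly when $D:=\textbf{z}g_b+g_d$ is invertible, so the image stays in the big cell. This gives the stated formula.

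I should also check that the sign convention $\textbf{z}(x)=(-x_1,\dots,-x_n)$ for the point $[1:x_1:\dots:x_n]$ matches this coset picture under $g\mapsto g[e_0]$ and the right action $g*v=g^{-1}v$. This is only a matter of bookkeeping, namely checking which of $n(\textbf{z})$ or $n(\textbf{z})^{-1}=n(-\textbf{z})$ is being used. I expect this convention-matching, and not the estimate, to be the main place where care is needed.

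For (ii), fix $x\in\Fll_w$ and write $\textbf{z}=\mathbf{a}+\varepsilon$ with $\mathbf{a}\in(p\Z_p)^n$ and all $|\varepsilon_i|\le p^{-w}$. For $g\in Iw$ we have $g_c\in M_{n,1}(p\Z_p)$, $g_b$ integral, and $g_d\in GL_n(\Z_p)$ (indeed $g_d$ lies in the Iwahori subgroup of $GL_n$). Set $D_0:=\mathbf{a}g_b+g_d$. Since $\mathbf{a}\equiv 0 \bmod p$, we get $D_0\in GL_n(\Z_p)$. Then
\[
D=D_0+\varepsilon g_b=D_0(1+D_0^{-1}\varepsilon g_b).
\]
The matrix $D_0^{-1}\varepsilon g_b$ has entries of norm at most $p^{-w}<1$, so the geometric series shows that $D$ is invertible over $\Ol^+$ and $D^{-1}=D_0^{-1}+E$ with $E$ having entries of norm at most $p^{-w}$. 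Hence
\[
\textbf{z}'=D_0^{-1}(\mathbf{a}g_a+g_c)+\bigl(E(\mathbf{a}g_a+g_c)+D^{-1}\varepsilon g_a\bigr).
\]
The first term lies in $(p\Z_p)^n$, since $\mathbf{a}g_a+g_c\in(p\Z_p)^n$ and $D_0^{-1}$ is integral. The bracketed term has entries of norm at most $p^{-w}$. Therefore $\textbf{z}'\in\Fll_w$.

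Finally, I will note that these estimates are uniform in $x$, so they hold over any affinoid (on $(R,R^+)$-points) and not only at rank-one points. This gives an action morphism $\Fll_w\times Iw\to\Fll_w$ of adic spaces.
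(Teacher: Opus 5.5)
Your proposal is correct and is essentially the paper's argument: the paper only says the proof is a straightforward computation. Your factorization of $n(\textbf{z})g$ (the same block product $\delta g$ used in the proof of Lemma~\ref{transform law}), together with the integrality estimate for $D=\textbf{z}g_b+g_d$, is exactly that computation, and the convention check you flag works out, since the identification is $Pg\mapsto g^{-1}[e_0]$, so $P\,n(\textbf{z})$ corresponds to $[1:-\textbf{z}]$ as in the paper's sign convention.
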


Its proof is a straightforward computation.

We further introduce a kind of\textit{ automorphic factor} $J$:

\[M_{n \times 1} (\mathscr{O}_{\C_p}) \times Iw \rightarrow M_{n}(\C_p),\] \[J(\overrightarrow{v},g):= \overrightarrow{v}g_{b}+g_d.\]

This  factor is very basic in this paper and also appears in the definition of perfectoid automorphic forms. 

Formally we define the  \textbf{right} action of $Iw$ on $M_{n \times 1}(\mathscr{O}_{\C_p})$ as follow 

\[g \cdot \overrightarrow{v}:=(\overrightarrow{v}g_b+g_d)^{-1}(\overrightarrow{v}g_a+g_c).\]

Then this automorphic factor satisfies \textbf{right 1-cocycle property}:

\begin{lem}
\label{right 1cocycle}
For any $\overrightarrow{v} \in M_{n \times 1}(\mathscr{O}_{\C_p})$ and $g_1$, $g_2\in Iw$, we have the following relation:

\[J(\overrightarrow{v},g_1g_2)=J(\overrightarrow{v},g_1)J(g_1 \cdot \overrightarrow{v}, g_2).\]
\end{lem}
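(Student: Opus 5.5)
The plan is to reduce the cocycle identity to associativity of matrix multiplication. To each $\overrightarrow{v}\in M_{n\times 1}(\mathscr{O}_{\C_p})$ I attach the $n\times(n+1)$ matrix $[\overrightarrow{v}\,|\,1_n]$, whose first column is $\overrightarrow{v}$ and whose remaining $n\times n$ block is the identity. Using the block decomposition $g=\begin{pmatrix} g_a & g_b\\ g_c & g_d\end{pmatrix}$ fixed in the notations, a one-line computation gives
\[
[\overrightarrow{v}\,|\,1_n]\,g=\bigl[\,\overrightarrow{v}g_a+g_c\;\big|\;\overrightarrow{v}g_b+g_d\,\bigr]=\bigl[\,\overrightarrow{v}g_a+g_c\;\big|\;J(\overrightarrow{v},g)\,\bigr].
\]

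The key step is to factor this whenever $J(\overrightarrow{v},g)$ is invertible. Pulling $J(\overrightarrow{v},g)$ out on the left gives the normalization identity
\[
[\overrightarrow{v}\,|\,1_n]\,g=J(\overrightarrow{v},g)\,[\,g\cdot\overrightarrow{v}\,|\,1_n\,],
\]
where $g\cdot\overrightarrow{v}=(\overrightarrow{v}g_b+g_d)^{-1}(\overrightarrow{v}g_a+g_c)$ is exactly the right action defined above. In words, $J$ is the matrix needed to bring the first row-block back to the normalized form $[\ast\,|\,1_n]$, mirroring the description $P\backslash G\cong\Fl$ via the chart $N_\mu^{opp}$.

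With this in hand, I compute $[\overrightarrow{v}\,|\,1_n]\,g_1g_2$ in two ways:
\[
[\overrightarrow{v}\,|\,1_n]\,(g_1g_2)=\bigl([\overrightarrow{v}\,|\,1_n]\,g_1\bigr)g_2=J(\overrightarrow{v},g_1)\,[g_1\cdot\overrightarrow{v}\,|\,1_n]\,g_2 .
\]
Comparing the last $n$ columns of both sides, the left side contributes $J(\overrightarrow{v},g_1g_2)$. The right side contributes $J(\overrightarrow{v},g_1)\,J(g_1\cdot\overrightarrow{v},g_2)$, because the right block of $[g_1\cdot\overrightarrow{v}\,|\,1_n]\,g_2$ is $J(g_1\cdot\overrightarrow{v},g_2)$ by the first display. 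This yields the claimed identity. Comparing the first columns gives, as a by-product, that the action is indeed a right action, $(g_1g_2)\cdot\overrightarrow{v}=g_2\cdot(g_1\cdot\overrightarrow{v})$, once $J(\overrightarrow{v},g_1g_2)$ is also invertible.

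The only genuine point to address is that the identity is meaningful only where $g_1\cdot\overrightarrow{v}$ is defined, i.e.\ where $J(\overrightarrow{v},g_1)$ is invertible. Note that the comparison of the right blocks needs only this invertibility, not invertibility of $J(g_1\cdot\overrightarrow{v},g_2)$. I would first check that this holds in the relevant range, namely for $\overrightarrow{v}$ with coordinates in $p\mathscr{O}_{\C_p}$ up to a small error, as for points of $\Fll_w$. For $g\in Iw$, the block $g_d$ lies in $GL_n(\Z_p)$ (upper triangular modulo $p$ with unit diagonal) and $g_b$ is integral. Hence $J(\overrightarrow{v},g)\equiv g_d$ modulo the maximal ideal, so $J(\overrightarrow{v},g)$ is invertible over $\mathscr{O}_{\C_p}$ and $g\cdot\overrightarrow{v}$ stays in the same region, consistent with the stability lemma for $\Fll_w$. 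For arbitrary $\overrightarrow{v}$, I would state the lemma for those $\overrightarrow{v}$ with $J(\overrightarrow{v},g_1)$ invertible, which is how it is used in the paper.
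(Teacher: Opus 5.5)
Your proof is correct and is essentially the paper's argument. The paper only says the identity follows by direct computation, and your factorization $[\overrightarrow{v}\,|\,1_n]\,g = J(\overrightarrow{v},g)\,[g\cdot\overrightarrow{v}\,|\,1_n]$ organizes that computation via associativity; note that $[\overrightarrow{v}\,|\,1_n]\,g$ is exactly the lower block row of $\delta g$ in the proof of Lemma~\ref{transform law}. Your caveat is also well taken: as literally stated, $g_1\cdot\overrightarrow{v}$ is defined only when $J(\overrightarrow{v},g_1)$ is invertible. This does hold for $\overrightarrow{v}$ with entries in the maximal ideal (e.g.\ points of $\Fll_w$), since then $J(\overrightarrow{v},g)\equiv g_d$ modulo the maximal ideal with $g_d\in GL_n(\Z_p)$.
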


Again we can prove this lemma by direct computations and formally we can rewrite the action $g$ on $\Fll_{w}$ as \[g \cdot \textbf{z}= J(\textbf{z},g)^{-1}(\textbf{z}g_a+g_c).\]

Now we introduce certain vector bundles on the flag variety. They will correspond to automorphic bundles on Shimura varieties. See theorem 2.1.3 in \cite{caraianischolze2017} for more general results.

Recall our $\Fl$ is the projective space $\mathbb{P}^n$. In particular, it has the (universal) tautological line bundle $\mathscr{L}$, and $\mathscr{L}$ embeds into the $n+1$-dimensional trivial vector bundle $\mathscr{V}$. Moreover we have the following exact sequences for vector bundles: \[0\rightarrow \mathscr{L} \rightarrow\mathscr{V} \rightarrow \mathscr{W}^D \rightarrow 0.\] Here   $\mathscr{W}^{D}$ is the $n$-dimensional (quotient) vector bundle.

More explicitly, we can write $\mathscr{W}^{D}$ as the following quotient \[\mathscr{W}^{D}\rightarrow \Fl \cong P \backslash (G \times \A^{n})\rightarrow P \backslash G, \] where $P$ acts on $\A^n$ from \textbf{left}: for any $p=\begin{pmatrix}p_a & p_b \\ 0 & p_d\end{pmatrix}$, it sends a point $\overrightarrow{v}$ (view as column vector) to $p_{d}\overrightarrow{v}$.

What's more, the section of  $\mathscr{W}^{D}$ corresponds to the following algebraic functions on $G$: \[\{\text{algebraic functions }\xi:G \rightarrow A^n: \xi (pg)=p_d \xi(g), \forall p \in P, g \in G\}.\]

For each $1 \leq i \leq n$, we consider the following global section $\textbf{s}_i$ for $\mathscr{W}^D$ \[\textbf{s}_i(g):=\text{the $i$-th column of }g_d.\] Write $\textbf{s}=(\textbf{s}_1,\textbf{s}_2, ..., \textbf{s}_n)$ and view it as \textbf{row} vector. Over the open subspace $\Fll$, there is the following fundamental lemma:

\begin{lem}
\label{transform law}
For any $g=\begin{pmatrix}g_a & g_b \\ g_c & g_d\end{pmatrix} \in Iw$, we have 
\[g^*(\textbf{s})=\textbf{s}(\textbf{z}g_b+g_d)=\textbf{s}J(\textbf{z},g).\]
\end{lem}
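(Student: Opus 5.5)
The plan is a direct computation in the model $\mathscr{W}^D = P\backslash(G\times \A^n)$, with sections identified with functions $\xi: G\to \A^n$ satisfying $\xi(pg)=p_d\,\xi(g)$. Two things need to be fixed first.

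First, the meaning of $g^*$. The right action of $g$ on $P\backslash G$ is $Ph\mapsto Phg$. The bundle $P\backslash(G\times\A^n)$ is $G$-equivariant via $(h,v)\mapsto (hg,v)$, so pullback of sections is
\[
(g^*\xi)(h)=\xi(hg^{-1}) \quad\text{or}\quad \xi(hg),
\]
depending on convention. I would choose the convention that makes the statement consistent with the action formula $\mathbf{z}\mapsto J(\mathbf{z},g)^{-1}(\mathbf{z}g_a+g_c)$ of the preceding lemma, and check this consistency at the outset.

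Second, the local trivialisation over $\Fll$. A point with coordinate $\mathbf{z}$ is represented by the matrix
\[
h_{\mathbf{z}}=\begin{pmatrix}1&0\\ \mathbf{z}&1\end{pmatrix}\in N_\mu^{opp},
\]
in block form. Evaluating there gives $\mathbf{s}(h_{\mathbf{z}})=$ the block $(h_{\mathbf{z}})_d=1_n$. So $\mathbf{s}$ is a frame of $\mathscr{W}^D$ over $\Fll$, and it corresponds to the identity in the trivialisation given by the section $\mathbf{z}\mapsto h_{\mathbf{z}}$.

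The main step is to compute $h_{\mathbf{z}}g$. Its lower-right block is $\mathbf{z}g_b+g_d=J(\mathbf{z},g)$, which is invertible for $g\in Iw$ and $\mathbf{z}\in \Fll_w$: $g_d$ is invertible modulo $p$ and $\mathbf{z}g_b$ is small, since $\mathbf{z}$ lies in a neighbourhood of $p\Z_p$. I would then write the Iwasawa-type factorisation
\[
h_{\mathbf{z}}g=p\cdot h_{\mathbf{z}'},
\]
with $p\in P$, $p_d=J(\mathbf{z},g)$, and $\mathbf{z}'=J(\mathbf{z},g)^{-1}(\mathbf{z}g_a+g_c)=g\cdot\mathbf{z}$. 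This is verified by solving the block equations: the lower-left block forces $\mathbf{z}'$, and the lower-right block forces $p_d$.

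Then, using $\xi(ph)=p_d\,\xi(h)$, the $n\times n$ matrix $\mathbf{s}(h_{\mathbf{z}}g)$ (its $i$-th column being $\mathbf{s}_i$) equals $J(\mathbf{z},g)\,\mathbf{s}(h_{\mathbf{z}'})$. Reading this back in terms of the frame $\mathbf{s}$ at the point $g\cdot\mathbf{z}$ gives $g^*(\mathbf{s})=\mathbf{s}\,J(\mathbf{z},g)$.

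The obstacle I expect is purely bookkeeping: whether the matrix factor appears on the left or the right, and whether it is $J$ or $J^{-1}$. This is governed by the facts that $\mathbf{s}$ is a row vector of sections, that the $P$-action on the fibre is a left action, and that the pullback convention must match. I would settle it by checking the $n=1$ case, and by checking compatibility with the cocycle relation of Lemma~\ref{right 1cocycle}. Both sides of the claimed identity must satisfy the same cocycle law in $g$, and this pins down the convention.
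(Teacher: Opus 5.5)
Your proposal is correct and is essentially the paper's argument: evaluate at the representative $\delta=h_{\mathbf{z}}$, where $\mathbf{s}(\delta)=\mathbb{I}_n$, and use $(g^*\mathbf{s})(\delta)=\mathbf{s}(\delta g)$, which is the lower-right block $\mathbf{z}g_b+g_d$ of $\delta g$. The factorization $h_{\mathbf{z}}g=p\cdot h_{\mathbf{z}'}$ with $p\in P$ is an unnecessary detour; it needs $J(\mathbf{z},g)$ invertible, whereas direct evaluation works on all of $\Fll$, and the comparison is with $\mathbf{s}(\delta)=\mathbb{I}_n$ at the same point $\mathbf{z}$, not with the frame at $g\cdot\mathbf{z}$.
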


\begin{proof}

It is enough to check this over $Spa(\C_p,\mathscr{O}_{\C_p})$ points. Recall our coordinates and set $\delta=\begin{pmatrix}1 & 0 \\ \textbf{z} & \mathbb{I}_n \end{pmatrix}$, then $\textbf{s}(\delta)=\mathbb{I}_n$. What's more, we have \[\delta g=\begin{pmatrix}g_a & g_b \\ \textbf{z}g_a+g_c & \textbf{z}g_b+g_d\end{pmatrix}\] and $(g^{*}(\textbf{s}))(\delta)=\textbf{s}(\delta g)$. Then the relation in the lemma holds.

\end{proof}

This elementary lemma reveals the geometric origin of the automorphic factor $J$ that arises in our construction of perfectoid automorphic forms. See section \ref{compare forms} (comparison with other constructions of overconvergent automorphic forms) for more details.

\begin{rem}
In this paper, as we work under unitary Shimura varieties with signature $(1,n)$ and \textit{reduced weight} (see section \ref{weight space}). The $n$-dimensional vector bundle $\mathscr{W}^{D}$ and its corresponding automorphic bundle (differential forms) are already sufficient. For general cases with signature $(a,b)$, it is necessary to discuss two kinds of vector bundles (dimensional $a$ and $b$). Another one will be the dual bundle for the tautological bundle. So that situation is more complicated but many ideas in this paper still works. In later papers of this series we will work out these details.
\end{rem}

Finally we set up notations for \textbf{inductions} and \textbf{weights}.

Unless otherwise stated, we will always work with  induction from \textbf{right}.

Let $R$ be a commutative ring and $G1$ be a group with a subgroup $G2$. Let $\chi$ denote a character $G1 \rightarrow R^{\times}$ and $Pr$ denote a certain property for $R$-valued functions on $G1$. Consider the following space of functions on $G1$: \[Ind_{G2}^{G1,Pr}(\chi,R):= \left\{f:G1 \rightarrow R
\ \middle| \
\begin{aligned}
& f(g_1 g_2)=\chi(g_2)f(g_1), \quad \forall g_1 \in G1, g2 \in G2,\\
& \text{$f$ has $Pr$}.
\end{aligned}
\right \}.\] It is called \textbf{(right) induction with property} $Pr$. The group $G1$ acts on this space through left translation: for any $g_1 \in G1$ and function $f$ in this induction, $g_1 .f$ is the following element \[\forall h_1 \in G1, (g_1.f)(h_1)=f(g_1^{-1}h_1).\] In this way the group action is from \textbf{left} and the induction space is a group representation.

In this paper, most of times when we use inductions (e.g. define perfectoid forms, analytic distributions), the subgroup $G2$ is clear (e.g. Borel subgroups), for simplicity we also omit $G2$ etc. Moreover, we  define a  \textbf{weight} to be a character for split torus.

 As an example we illustrate such induction ideas  and weights for classical highest weight representation. View the group $G=GL_{n+1}$ as an algebraic group over $Spec(\Q_p)$. We have fixed its Borel subgroup $B$ corresponding to the set of upper triangular matrices. Let $\ka$ be an algebraic dominant weight for the maximal torus $T$ (diagonal torus). More explicitly, \[\ka=(\ka_0,...,\ka_n) \in X^*(T)\cong \Z^{n+1},\] \[\ka_0 \geq \ka_1...\geq \ka_n.\] And consider the twist $\ka^{\vee}=w_{0,G}(-\ka)$, here $w_0$ is the longest element in the Weyl group. More explicitly, we have \[\ka^{\vee}=(-\ka_{n},-\ka_{n-1},...,-\ka_{0}).\] Let $Alg$ denote the property of being algebraic and consider the following induction \[\Vc_{\ka}:=Ind_{B}^{G,Alg}(\ka^{\vee},\Q_p).\] It is a finite dimensional algebraic representation for $G$. And it is indeed irreducible and is called highest weight representation. Each finite dimensional irreducible algebraic representation for $G$ comes in this way. See Humpreys' textbook \cite{humphreys2012linear} chapter XI for more details.

We want to stress that here is a \textbf{sign} issue about weights. The group $G$ acts on $\Vc_{\ka}$ through left translation, and the highest weight (respect to Borel subgroup) for $T$ in this representation is $\ka$ exactly.  This the reason why the definition for $\Vc_{\ka}$ involves $\ka^{\vee}$. To clarify this sign issue, we also call $\ka^{\vee}$ the \textbf{induction weight} and $\ka$ the \textbf{representation weight}. For example, for the standard representation $\Q_p^{n+1}$, the corresponding induction weight is $(0,...0,-1)$ and the representation weight is $(1,0,...,0)$.

We will follow the usual convention about weights in the literature like Andreatta-Iovita-Pilloni in \cite{AIP2015} and Xu Shen in \cite{shen2016}. Roughly speaking, similar to above case, we will define weight $\ka$ forms via induction process involving weights $\ka^{\vee}$.

\subsection{Notations for \texorpdfstring{$w$}{w}-groups and \texorpdfstring{$w$}{w}-ordinary locus}
\label{w-locus}
In this section we further set up some notations for $w$-groups and  introduce $w$-ordinary locus over Shimura varieties.

Previously in section \ref{unitary shimura}, we have already fixed a pin down data $(T,B)$ for $G(\Q_p) $ and introduce the  subgroup $M=GL_{n}$ (plays the role of Levi subgroup). Moreover, let $N$ denote the unipotent subgroup for $B$ and $B^{opp}$ denote the opposite Borel subgroup with the unipotent part $N^{opp}$. The Iwahori subgroup corresponding to $B^{opp}$ is denoted by $Iw^{opp}$, it is the subgroup of $GL_{n+1}(\Z_p)$ that is the preimage of $B^{opp}(\F_p)$ under modulo $p$. Similarly we set up these notations for $M$. We fix a pin down data $(T_{M},B_{M})$, where $T_{M}$ is the maximal torus defined by diagonal matrices and $B_{M}$ is the Borel subgroup consisting of upper triangular matrices. Let $N_{M}$ be the unipotent part of $B_{M}$ and similarly we introduce the opposite counterpart $(B_M^{opp},N^{opp}_{M})$. Let  $Iw_{M}$ (resp. $Iw_{M}^{opp}$) denote the Iwahori subgroup corresponding to $B_M$ (resp $B_M^{opp}$). Obviously these notations are compatible: \[T_{M}=T \cap M, B_{M}=B \cap M, Iw_{M}=Iw \cap M,...\]

To simplify notations, we further set up

\[G_{0}:=GL_{n+1}(\Z_p), N_0:=N(\Z_p), B_0:=B(\Z_p), T_0:=T(\Z_p)\] and 
\[M_0:=GL_n(\Z_p), N_{M,0}:=N_M(\Z_p), B_{M,0}:=B_M(\Z_p), T_{M,0}:=T_M(\Z_p). \] Similarly we define opposite analogue $N^{opp}_0$, $N_{M,0}^{opp}$.

Moreover, for positive integer $s$, we define

\[T_{M,s}:=\ker(T_M(\Z_p)\rightarrow T_{M}(\Z_p/p^{s})),\] \[B_{M,s}:=\ker(B_M(\Z_p)\rightarrow B_{M}(\Z_p/p^{s})),\] \[N_{M,s}:=\ker(N_M(\Z_p)\rightarrow N_{M}(\Z_p/p^{s}))\] and similarly for $N_{M,s}^{opp}$, $B_{M,s}^{opp}$.

In particular, we can write the Iwahori decomposition for $Iw_M$ as follow \[Iw_M=N_{M}^{opp}(p\Z_p)T_{M}(\Z_p)N_M(\Z_p)=N_{M,1}^{opp}T_{M,0}N_{M,0}.\]

We further set up certain "$\Ol_{\C_p}$-\textit{analogue}" of these kernels. For any $w \in \Q_{>0}$ we define

\[T_{M,w,\Ol}:=\ker(T_M(\Op)\rightarrow T_{M}(\Op/p^{s})),\] \[B_{M,w,\Ol}:=\ker(B_M(\Op)\rightarrow B_{M}(\Op/p^{s})),\] \[N_{M,w,\Ol}:=\ker(N_M(\Op)\rightarrow N_{M}(\Op/p^{s}))\] and similarly for $N_{M,w,\Ol}^{opp}$, $B_{M,w,\Ol}^{opp}$. These groups are large and \textbf{not} $p$-adic groups in the usual sense.

We introduce \textbf{$w$-groups}, which are about  \textbf{"$w$-neighborhood"} of previous $p$-adic groups.

For any $w \in \Q_{>1}$ and $s \in \Z_{>0}$, we define 

\[T^{(w)}_{M,s}:=\{\gamma=(\gamma_{i,j}) \in T_{M}(\Op): \exists \gamma^{'} \in T_{M,s} \text{ such that } \forall  (i,j),|\gamma_{i,j}^{'}-\gamma_{i,j} | \leq p^{-w}  \},\] \[B^{(w)}_{M,s}:=\{\gamma=(\gamma_{i,j}) \in T_{M}(\Op): \exists \gamma^{'} \in B_{M,s} \text{ such that } \forall  (i,j),|\gamma_{i,j}^{'}-\gamma_{i,j} | \leq p^{-w}  \},\] \[N^{(w)}_{M,s}:=\{\gamma=(\gamma_{i,j}) \in T_{M}(\Op): \exists \gamma^{'} \in N_{M,s} \text{ such that } \forall  (i,j),|\gamma_{i,j}^{'}-\gamma_{i,j} | \leq p^{-w}  \},\] and similarly define $N^{opp,(w)}_{M,s}$ and $B^{opp,(w)}_{M,s}$.

In the same way we define $w$-neighborhood for $Iw_{M}$:

\[Iw^{(w)}_{M}:=\{\gamma=(\gamma_{i,j}) \in GL_n(\Op): \exists \gamma^{'} \in Iw_M \text{ such that } \forall  (i,j),|\gamma_{i,j}^{'}-\gamma_{i,j} | \leq p^{-w}  \}.\] For any positive integer $m$, we let $Iw_{M}(\Z/p^m)$ denote the subgroup of $GL_{n}(\Z/p^m)$ which is the image of reduction modulo $p^m$ for $Iw_M$. If $m$ is further the largest integer which is equal or smaller than $w$, then the image $Iw_{M}\rightarrow GL_n(\Ol_{\C_p}/p^w)$ is also $Iw_{M}(\Z/p^m)$. We have the following Cartesian diagram \[
\xymatrix{
Iw^{(w)}_{M} \ar[r] \ar@{^{(}->}[d] & Iw_M(\Z/p^m) \ar@{^{(}->}[d] \\
GL_n(\Ol_{\C_p}) \ar[r] & GL_n(\Ol_{\C_p}/p^w)
}.\] 

We have the following analogue of "\textit{large}" Iwahori decomposition:

\[Iw^{(w)}_M=N_{M,1}^{opp,(w)}T_{M,0}^{(w)}N_{M,0}^{(w)}.\]

On the other hand, we have the following relations about $w$-groups:

\[T^{(w)}_{M,s}=T_{M,s}T_{M,w,\Ol}, B^{(w)}_{M,s}=B_{M,s}B_{M,w,\Ol}, N^{(w)}_{M,s}=N_{M,s}N_{M,w,\Ol}.\] 

And similar relations hold for $N^{opp,(w)}_{M,s}$ and $B_{M,s}^{opp,(w)}$.

For later use (in particular the comparison of two constructions in section \ref{classical form}), we further introduce related group objects in adic spaces.

\begin{defn}
Let $\mathbf{B}(0,1)=Spa(\C_p \langle X \rangle, \Ol_{\C_p}\langle X\rangle)$ denote the closed unit disk and then $\mathbf{B}(0,p^{-w})=p^w \mathbf{B}(0,1)$ is the closed disk with radius $p^{-w}$.

(1) Define \[\mathcal{N}_{M,w}:=\begin{pmatrix} 1 &  \mathbf{B}(0,p^{-w}) & \cdots & \cdots & \mathbf{B}(0,p^{-w})\\ & 1 &  \mathbf{B}(0,p^{-w}) & \cdots &  \mathbf{B}(0,p^{-w}) \\  &   & \cdots & \cdots & \cdots \\   &   &  &  & 1 \end{pmatrix} .\] As adic space it is isomorphic to   $\mathbf{B}(0,p^{-w})^{\frac{n(n-1)}{2}}$.

(2) Define \[\mathcal{T}_{M,w}:=\begin{pmatrix} 1+\mathbf{B}(0,p^{-w}) \\ &  1+\mathbf{B}(0,p^{-w}) \\ & &  \cdots \\ & & & 1+\mathbf{B}(0,p^{-w})) \end{pmatrix}.\] As adic space it is isomorphic to  $\mathbf{B}(0,p^{-w})^{n}$.

(2) Define \[\Io ^{(w)}_{M}:=(a_{i,j}+\mathbf{B}(0,p^{-w})), (a_{i,j})\in Iw_{M}.\] As adic space it is isomorphic to finite copies of $\mathbf{B}(0,p^{-w})^{n^2}$.
\end{defn}

The $Spa(\C_p,\Ol_{\C_p})$-points for them coincides with the groups $T_{M,w,\Ol}$, $N_{M,w,\Ol}$ and $Iw^{(w)}_{M}$, which justifies the notations.

Now we turn to introduce \textbf{$w$-ordinary locus} for Shimura varieties.

Let $w \in \Q_{>0}$, in previous section we have defined $w$-ordinary locus $\Fl_{w}^{\times}$ on the flag variety $\Fl$. Through pullback via the period map $\pi_{HT}$, we get the analogue over Shimura varieties:

\begin{defn}

Define \[X_{\infty,w}:=\pi_{HT}^{-1}(\Fl_w^{\times}),\] it is an open adic subspace for $\X_{\infty}$. 

Moreover, we define \[\X_{Iw,w}:=\pi_{Iw}(\X_{\infty,w}), \X_{Iw^+,w}:=\pi_{Iw^+}(\X_{\infty,w}).\] They are open adic subspaces for $\X_{Iw}$ and $\X_{Iw^+}$ respectively. 

They are called \textbf{$w$-ordinary locus }(over infinite level, Iwahori level and strict Iwahori level).

\end{defn}

\begin{rem}

The locus $(\X_{\infty,w})$ is stable under the natural \textbf{right} action by $Iw$. Moreover, we also have \[\X_{\infty,w}= \pi_{Iw}^{-1}(\X_{Iw,w})=\pi_{Iw^+}^{-1}(\X_{Iw^+,w}).\]

\end{rem}

Recall the coordinate $\mathbf{z}=(\mathbf{z}_i)$ on $\Fl^{\times}$, define $\z_{i}:=\pi_{HT}^{*}\mathbf{z}_i$ and $\z:=\pi_{HT}^*\mathbf{z}$, in this way we get coordinates $\z$ (view it as \textbf{column} vector) for $\X_{\infty,w}$. 

Finally we discuss the promised relation between the vector bundle $\mathscr{W}^D$ over $\Fl$ and automorphic bundles over Shimura varieties.

Let $\mathcal{H}^{univ}$ (resp. $\mathcal{H}^{D,univ}$) denote the universal $p$-divisible groups (resp. Cartier dual) over $\X$ (hyperspecial level at $p$).  Denote the  the natural map \[\pi_{H^D}:\mathcal{H}^{D,univ}\rightarrow\X.\]  Consider the universal Hodge bundle \[\underline{\omega}:=(\pi_{H^D})_*\Omega^1_{\mathcal{H}^{D,univ}/\X},\] which is an $n$-dimensional vector bundle over $\X$. Similarly we get the line bundle \[\underline{L}:=(\pi_{H})_*Lie(\mathcal{H}^{univ}/\X).\]

Consider the natural map $\pi_{X}: \X_{\infty}\rightarrow \X$ and define \[\underline{\omega}_{\infty}:=\pi_X^*(\underline{\omega}).\] We have the following observation (a special case of theorem 2.1.3 in \cite{caraianischolze2017}):

\begin{lem}
There is a natural isomorphism \[\pi_{HT}^* \mathscr{W}^D \cong \underline{\omega}_{\infty}.\]
\end{lem}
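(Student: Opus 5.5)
The plan is to build the isomorphism fiberwise from the Hodge--Tate exact sequence of $H$ and the trivialization $\eta_p$. Then I will check that the construction works in families over $\X_{\infty}$, and that it is compatible with the right $G(\Q_p)$-actions.

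\textbf{Pointwise construction.} Let $x=(A,\lambda,\iota,\eta^p;\eta_p)$ be a $Spa(\C_p,\Op)$-point of $\X_{\infty}$. The Hodge--Tate sequence
\[0\rightarrow Lie(H)\rightarrow T_p(H)\otimes_{\Z_p}\C_p\xrightarrow{HT_H}\omega_{H^D}\rightarrow 0\]
transported through $\eta_p$ reads
\[0\rightarrow \eta_p^{-1}(Lie(H))\rightarrow V\otimes\C_p\rightarrow \omega_{H^D}\rightarrow 0.\]
By definition of $\pi_{HT}$, the first term is exactly the fiber of $\mathscr{L}$ at $\pi_{HT}(x)$. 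Since $\mathscr{V}$ is the trivial bundle with fiber $V\otimes\C_p$, comparing with $0\rightarrow\mathscr{L}\rightarrow\mathscr{V}\rightarrow\mathscr{W}^D\rightarrow 0$ gives a canonical identification
\[(\pi_{HT}^*\mathscr{W}^D)_x\cong \omega_{H^D}=\underline{\omega}_{\infty,x}.\]
The last equality holds because $\underline{\omega}$ is $\Omega^1$ of $\mathcal{H}^{D,univ}$, whose fiber is $\omega_{H^D}$.

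\textbf{Globalization.} This is the main obstacle. To make the map algebraic in families, I would use the relative Hodge--Tate map of the universal $p$-divisible group over the perfectoid space $\X_{\infty}$ (Scholze, \cite{scholze2015torsion} III.3). There the universal trivialization $\eta_p:\Z_p^{n+1}\cong T_p(\mathcal{H}^{univ})$ exists over $\X_{\infty}$. Consequently $HT$ yields a surjection of vector bundles
\[\Ol_{\X_{\infty}}^{n+1}=\pi_{HT}^*\mathscr{V}\twoheadrightarrow\underline{\omega}_{\infty}.\]
Its kernel is a line subbundle, which by construction of $\pi_{HT}$ is $\pi_{HT}^*\mathscr{L}$. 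Concretely, $\pi_{HT}$ is defined as the classifying map of this kernel, and $\mathscr{L}$ is universal on $\mathbb{P}(V)$. Exactness on the right and the fact that the kernel is locally a direct summand follow from the pointwise statement together with the fact that $\underline{\omega}_{\infty}$ is locally free of rank $n$. Taking cokernels gives the isomorphism $\pi_{HT}^*\mathscr{W}^D\cong\underline{\omega}_{\infty}$. Alternatively, one can simply specialize Theorem 2.1.3 of \cite{caraianischolze2017} to $\mu$ with $P_\mu$ as in section \ref{flag variety}. In that case the work reduces to matching their representation-theoretic description of automorphic bundles, $P_\mu$ acting on $\A^n$ through $p_d$, with the Hodge bundle $\underline{\omega}$. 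The only delicate point there is the sign and dual conventions: which of $H$, $H^D$ and which of $\mathscr{W}$, $\mathscr{W}^D$ appear.

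\textbf{Equivariance.} Finally I would check that the isomorphism is $G(\Q_p)$-equivariant for the right actions. Replacing $\eta_p$ by $\eta_p\circ g$ changes the identification of $V\otimes\C_p$ with $T_p(H)\otimes\C_p$ by $g$. This matches the right action $g*v=g^{-1}v$ on $\mathscr{V}$, and hence on $\mathscr{W}^D$. Consequently the isomorphism descends compatibly with $\pi_X^*$.
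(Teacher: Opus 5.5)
Your proposal is correct and is essentially the paper's proof. The paper also takes the Hodge--Tate sequence $0\to\underline{L}_\infty\to\Ol^{n+1}\to\underline{\omega}_\infty\to 0$ on $\X_\infty$, notes that by construction of $\pi_{HT}$ the first term is $\pi_{HT}^*\mathscr{L}$, and concludes that the sequence is the pullback of $0\to\mathscr{L}\to\mathscr{V}\to\mathscr{W}^D\to 0$. Your pointwise check, the local-direct-summand argument and the $G(\Q_p)$-equivariance verification are details the paper leaves implicit; phrasing things via the kernel as a subbundle of $\Ol^{n+1}$, rather than via $\pi_X^*\underline{L}$, also avoids having to worry about the suppressed Tate twist on $Lie(H)$.
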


\begin{proof}
Let $\underline{L}_{\infty}=\pi_X^*(\underline{L})$ and it is a line bundle over $\X_{\infty}$. The exact sequence about Hodge-Tate period map produces an exact sequence for vector bundles over $\X_{\infty}$:\[0 \rightarrow \underline{L}_{\infty}\rightarrow \Ol^{n+1} \rightarrow \underline{\omega}_{\infty} \rightarrow 0. \]

On the other hand, over the flag variety $\Fl$ there is also an exact sequence for vector bundles: \[0\rightarrow \mathscr{L} \rightarrow\mathscr{V} \rightarrow \mathscr{W}^D \rightarrow 0.\] 

From the construction, $\underline{L}_{\infty}$ is exactly the pullback (via $\pi_{HT}$) of the universal line bundle $\mathscr{L}$. Then the first exact sequence is the pullback of the second exact sequence. In particular we obtain this lemma.
\end{proof}

Use pullback under   natural maps $\X_{Iw} \rightarrow \X$ and $\X_{Iw^+} \rightarrow \X$, we further define the  automorphic bundle $\underline{\omega}_{Iw}$ and $\underline{\omega}_{Iw^+}$ respectively.

Moreover, set $\s_i:=\pi_{HT}^* \mathbf{s}_i$ and $\s:=(\s_1,...,\s_n)=\pi_{HT}^*\mathbf{s}$ (view it as \textbf{row} vector). These are sections for $\underline{\omega}_{\infty}$ and we have the following transformation law (due to lemma \ref{transform law}):
\[\gamma^*(\s)=\s (\z \gamma_b+\gamma_d)=\s J(\z,\gamma)\] for any $\gamma=\begin{pmatrix} \gamma_a & \gamma_b \\ \gamma_c & \gamma_d \end{pmatrix} \in Iw$.

\begin{rem}

As we will work with "reduced" weight (see next section), it is enough to discuss this $n$-dimensional automorphic bundle. In general case, we will also consider another automorphic bundle (dual bundle for $\underline{L}$) coming from $\mathcal{H}$.

\end{rem}

 \subsection{Weight space and perfectoid automorphic forms}
 \label{weight space}

 Following \cite{chj} and \cite{drw}, we  introduce small weights, affinoid weights and further  construct perfectoid automorphic forms.
 
 Recall the maximal torus $T_{M}$ for $M\cong GL_{n}$, it is an $n$-dimensional torus. Notice that the torus $T$ for $GL_{n+1}$ has the following decomposition $T=GL_{1}\times T_{M}$. A "\textit{reduced}" weight for $T$ is a weight which is trivial on $GL_1$, and it is equivalent to be a weight for $T_{M}$. In this paper, we will always work with "reduced" weight. In particular, the weight space will be $n$-dimensional instead of $n+1$-dimensional. As central weight twist won't influence too much, such assumption is mild. And this simplified convention is also widely used. For example, in the literature about modular forms or eigencurve (like \cite{chj}), the weight is usually a single element $k$ instead of a two-tuple $(k_0,k_1)$. Moreover, it will simplify many notations in this paper.

\begin{defn}

The \textbf{weight space} is \[\mathcal{W}:=Spa(\Z_p[[T_{M}(\Z_p)]],\Z_p[[T_{M}(\Z_p)]])^{rig}=Spa(\Z_p[[(\Z_p^{\times})^{n}]],\Z_p[[(\Z_p^{\times})^{n}]])^{rig},\] here "$rig$" means taking the generic fiber.

\end{defn}

The weight space parametrizes continuous weights, for example, $\mathcal{W}(\C_p)$ is exactly the set of continuous group maps $T_M(\Z_{p})\rightarrow \C_p^{\times}$. More generally, for any complete  $\Z_p$-algebra $R$, each continuous  weight $\ka:T_{M}(\Z_p)\rightarrow R^{\times}$ can be written as $\ka=(\ka_1,...,\ka_n)$, where $\ka_i:\Z_p^{\times}\rightarrow R^{\times}$ is a continuous character. From this description, we can see that the weight space $\W$ is indeed a finite union of $n$-dimensional open ball. This fact can also be deduced from the following (non-canonical) isomorphism (depending on choice $\Z_p^{\times}\cong \mu_{p-1}\times (1+p\Z_p)$): \[\Z_p[[T_{M}(\Z_p)]]\cong \Z_{p}[(\mu_{p-1})^{n}]\otimes \Z_p[[T_1,...,T_n]]. \]

 Now we define small weights and affinoid weights (see section 3.1 in \cite{drw} for more details):
 
 \begin{defn}
 
 (1) A \textbf{small $\Z_p$-algebra} is a $p$-torsion free reduced ring which is also a finite $\Z_p[[T_1,...,T_d]]$-algebra for some non-negative integer $d \in \Z_{\geq 0}$. In particular, such an algebra is equipped with a canonical adic profinite topology and is complete with respect to $p$-adic topology.
 
 (2) A \textbf{small weight} is a pair $(R_{\U},\ka_{\U})$ where $R_{\U}$ is a small $\Z_p$-algebra and $\ka_{\U}:T_{M}(\Z_p)\rightarrow R_{\U}^{\times}$ is a continuous character such that $\ka_{\U}((1+p)\mathbb{I}_{n})-1$ is topological nilpotent in $\R_{\U}$ with respect to $p$-adic topology. Then it induces a natural map \[Spa(R_{\U},R_{\U})^{rig}\rightarrow \W. \] By abuse notation we also call $\U:=Spa(R_{\U},R_{\U})$ a small weight and  write $R_{\U}^{+}:=R_{\U}$.
 
 (3) An \textbf{affinoid weight} is a pair $(R_{\U},\ka_{\U})$ where $R_{\U}$ is a reduced Tate algebra which is topologically finite type over $\Q_p$ and $\ka_{\U}:T_{M}(\Z_p)\rightarrow R_{\U}^{\times}$ is a continuous weight. Then it induces a natural map \[Spa(R_{\U},R_{\U}^{\circ})^{rig}\rightarrow \W. \] By abuse notation we may also call $\U:=Spa(R_{\U},R_{\U}^{\circ})$ an affinoid weight and also write $R_{\U}^{+}:=R_{\U}^{\circ}$.

 \end{defn}
 
 From now on, by a \textbf{weight} we always mean a small weight or an affinoid weight. 
 
 We further introduce the following \textbf{convention}:
 
 For any integer $m$, we also view $m$ as a weight by identifying it with the character \[T_{M}(\Z_p)\rightarrow \Z_p^{\times},\] \[(t_1,..,t_n)\mapsto \prod_{i}t_i^{n}.\] What's more, for any weight $\ka=(\ka_1,...,\ka_n)$, we write $\ka+m$ for the weight defined by $(\ka_1+m,...,\ka_n+m)$.

 Now we define the conception of "\textit{mixed completed tensor}" as in section 3.1 of \cite{drw}. It is slightly different from the convention in \cite{chj}, see \cite{drw} for more details.
 
 \begin{defn}
 Let $R$ be a small $\Z_p$-algebra.
 
 (1) For any $\Z_p$-module $Q$, we define \[Q\widehat{\otimes}^{'} R:=\varprojlim_{j\in Inx} (Q \otimes_{\Z_p} R/I_j )\] here $(I_j:j \in Inx)$ runs over a cofinal system of neighborhood of 0 consisting of $\Z_p$-submodules of $R$. Moreover, if $Q$ is a $\Z_p$-algebra, then $Q\widehat{\otimes}^{'} R$ is also a $\Z_p$-algebra.
 
 (2) For any $\Q_p$-Banach module $Q$ with an open bounded $\Z_p$-submodule $Q_0$. We define the \textbf{mixed completed tensor} \[Q\widehat{\otimes}R:=(Q_0\widehat{\otimes}^{'} R)[\frac{1}{p}].\] This is independent of choice of $Q_0$.
 
 \end{defn}
 
 Combine with weights we  give the following definitions:
 
 \begin{defn}
 Let $(R_{\U},\ka_{\U})$ be a weight.
 
 (1) For any $\Z_p$-module $Q$, the symbol $Q\widehat{\otimes}^{'} R_{\U}^{+}$ will either represent $Q\widehat{\otimes}^{'} R_{\U}$ in the case of small weights, or represent the $p$-adically completed tensor over $\Z_p$ in the case of affinoid weights.
 
 (2) For any $\Q_p$ Banach module $Q$, the symbol $Q\widehat{\otimes} R_{\U}$ will either represent the mixed completed tensor in the case of small weights, or represent the $p$-adically completed tensor over $\Q_p$ in the case of affinoid weights.
 
 \end{defn}
 
 Now we introduce \textbf{$r$-analytic} functions, which is basic for overconvergent automorphic forms and overconvergent cohomology.

 \begin{defn}
 
 Let $r \in \Q_{>0}$ and $m \in \Z_{>0}$. Let $Q$ be a uniform $\C_p$-Banach algebra and $Q^{\circ}$ be the unit ball.
 
 (1) A function $f: \Z_p^{m} \rightarrow Q$ (resp. a function $f: (\Z_p^{\times})^m \rightarrow Q$) is called \textbf{$r$-analytic} if for each $a=(a_1,...,a_m)\in \Z_p^m$ (resp. $a \in (\Z_p^{\times})^m$), there is a power series $f_{a} \in Q[[T_1,...,T_m]]$ which converges on the $m$-dimensional closed unit ball $\mathbf{B}^m(0,p^{-r})\subset \C_p^m$ such that \[f(x_1+a_1,...,x_m+a_m)=f_a(x_1,...,x_m)\] for all $x_i \in p^{\ulcorner r \urcorner}\Z_p$. Here $\ulcorner r \urcorner$ means the smallest integer that is greater or equal to $r$.
 
 (2) Let $C^{r-an}(\Z_p^{m},Q)$ (resp $C^{r-an}((\Z_p^{\times})^m,Q))$ denote the space of $r$-analytic functions.
 
 (3) Let $C^{r-an}(\Z_p^{m},Q^{\circ})$ (resp $C^{r-an}((\Z_p^{\times})^m,Q^{\circ}))$  denote the subset of   $C^{r-an}(\Z_p^{m},Q)$ (resp $C^{r-an}((\Z_p^{\times})^m,Q))$ consisting of $Q^{\circ}$-valued functions.
 
 \end{defn}
 
 Further we define the $r$-analytic weights:
 
 \begin{defn}
 
 (1) A weight $(R_{\U},\ka_{\U})$ is \textbf{$r$-analytic} if it is $r$-analytic as a function \[ \ka_{\U}:(\Z_p^{\times})^n\rightarrow R_{\U}^{\times}\subset \C_p\widehat{\otimes} R_{\U}\] through the isomorphism \[T_M(\Z_p)\cong (\Z_p^{\times})^n.\]
 
 (2) For a weight $(R_{\U},\ka_{\U})$, we use $r_{\U}$ to denote the smallest positive integer $r$ such that the weight is $r$-analytic.
 
 \end{defn}
 
 \begin{rem}
 
 (1) There is a standard fact that each continuous weight $\Z_p^{\times}\rightarrow R_{\U}^{\times}$ is $r$-analytic for sufficiently large $r$. In addition, if it is $r$-analytic, then it extends to a larger character \[\Z_p^{\times}(1+p^{r+1}\mathscr{O}_{\C_p})\rightarrow (\Ol_{\C_p} \widehat{\otimes} R_{\U}^+)^{\times} .\] See proposition 2.6 of \cite{chj} for more details.
 
 (2) Moreover, for $\ka_{\U}=(\ka_{1},...,\ka_{n})$, it is $r$-analytic if and only if each $\ka_i$ is $r$-analytic. In particular, for any $w \in \Q_{>1}$ with $w >1+r_{\U}$, the weight $\ka_{\U}$ extends to a character \[\ka_{\U}: T_{M,0}^{(w)}\rightarrow (\Ol_{\C_p} \widehat{\otimes} R_{\U}^+)^{\times}.\]
 
 \end{rem}
 
  Recall the twist $\ka_{\U}^{\vee}=w_{0,M}(\ka_{\U})=(-\ka_{U,n},\cdots,-\ka_{U,1})$ (section \ref{flag variety}),  now we introduce \textbf{$r$-analytic induction}:
 
 \begin{defn}
 Let $Q$ be a uniform $\C_p$-Banach algebra.
 
 (1) A function $f:N_{M,1}^{opp}\rightarrow Q$ is called \textbf{$r$-analytic} if under the isomorphism \[N_{M,1}^{opp}=\begin{pmatrix}
  1 \\
  p\Z_p & 1 \\
  ... & ... & ... \\
  p\Z_p & ... & p\Z_p & 1
  \end{pmatrix}\cong \Z_p^{\frac{n(n-1)}{2}},\] the function is $r$-analytic. Let $C^{r-an}(N_{M,1}^{opp},Q)$ denote the space of such functions.
 
 (2) Let $(R_{\U},\ka_{\U})$ be an $r$-analytic weight. Extend $\ka_{\U}$ to a group map $\ka_{\U}:B_{M,0}\rightarrow R_{\U}^{\times}$ by setting $\ka_{\U}|_{N_{M,0}}=1$. Define the following space (\textbf{$r$-analytic induction}):
 
 \[C_{\ka_{\U}}^{r-an}(Iw_M,Q):=\left\{f:Iw_M \rightarrow Q
\ \middle| \
\begin{aligned}
& f(\gamma \beta)=\ka_{\U}^{\vee}(\beta)f(\gamma), \quad \forall \gamma \in Iw_M, \beta \in B_{M,0},\\
& \text{$f|_{N_{M,1}^{opp}}$ is  $r$-analytic}.
\end{aligned}
\right \}.\]
 
 (3) Let $C_{\ka_{\U}}^{r-an}(Iw_M,Q^{\circ})$ denote the subset of $C_{\ka_{\U}}^{r-an}(Iw_M,Q)$ consisting of $Q^{\circ}$-valued functions.
 
 \end{defn}
 
 In term of notations about inductions in  section \ref{flag variety}, we may also write  \[C_{\ka_{\U}}^{r-an}(Iw_M,Q)=Ind_{B_{M,0}}^{Iw_M,r-an}(\ka_{\U}^{\vee},Q).\] We will use the left simplified symbol from now on.
 
 \begin{rem}
 
 We have the natural isomorphism  (through restriction) between $\C_p$-Banach algebras: \[C_{\ka_{\U}}^{r-an}(Iw_M,Q) \cong C^{r-an}(N_{M,1}^{opp},Q).\] The right side doesn't involve the weight $\ka_{\U}$. But different $\ka_{U}$ will produce different representations of $Iw_M$ on this space. This is analogous to the fact that $\Q_p^{\times}$ can act on $\Q_p$ through different characters.
 
 \end{rem}
 
 Let $\ka_{\U}$ be a weight and let $w \in \Q_{>1}$ with $w>r_{\U}+1$. The character $\ka_{\U}$ extends to a character on $T_{M,0}^{(w)}$, As $B^{(w)}_{M,0}=T_{M,0}^{(w)}N_{M,0}^{(w)}$, $\ka_{\U}$ further extends to a character on $B^{(w)}_{M,0}$ via trivial extension (set $\ka_{\U}|_{N_{M,0}^{(w)}}=1$).
 
 Moreover, each function $f \in C_{\ka_{\U}}^{r-an}(Iw_M,Q)$ naturally extends to a function \[f:Iw_{M}^{(w)}\rightarrow Q,\] \[f(\gamma \beta)=\ka_{\U}(\beta)f(\gamma), \quad \forall \gamma \in Iw_M^{(w)}, \beta \in B_{M,0}^{(w)} .\] This fact follows from the following decomposition directly: \[Iw^{(w)}_M=N_{M,1}^{opp,(w)}T_{M,0}^{(w)}N_{M,0}^{(w)}.\] In this way we get the natural identification \[C_{\ka_{\U}}^{r-an}(Iw_M,Q)=\ cong C_{\ka_{\U}}^{r-an}(Iw_M^{(w)},Q),\] where the later one is $r$-analytic induction for $Iw^{(w)}_M$ defined in the same way.
 
 Similarly we have such extension for elements in $C_{\ka_{\U}}^{r-an}(Iw_M,Q^{\circ})$.
 
 In particular, the space of $r$-analytic induction has a \textbf{left} action by $Iw^{(w)}_M$:
 
 \begin{defn}
 
 For $w \in \Q_{>1}$ with $w>1+r_{\U}$, there is a natural \textbf{left} action of $Iw_M^{(w)}$ on $C_{\ka_{\U}}^{r-an}(Iw_M,Q)$ via left translation: 
 
 \[(\gamma_1 \cdot f)(\gamma_2)=f(\gamma_1^{-1}\gamma_2)\]
 
 for any $\gamma_1, \gamma_2 \in Iw_M^{(w)}$ and $f \in C_{\ka_{\U}}^{r-an}(Iw_M,Q)$. We use $\rho_{\ka_{\U}}$ to denote this action. 
 
 Similarly we define the left action $\rho_{\ka_{\U}}$ of $Iw_M^{(w)}$ on $C_{\ka_{\U}}^{r-an}(Iw_M,Q^{\circ})$ 
 
 \end{defn}
 
Recall the natural maps $\pi_{Iw}:\X_{\infty,w}\rightarrow \X_{Iw,w}$, $\pi_{Iw^+}:\X_{\infty,w}\rightarrow \X_{Iw^+,w}$. Now we are ready to define the sheaf of \textbf{perfectoid automorphic forms}.
 
 \begin{defn}
 
 Let $(R_{\U},\ka_{\U})$ be a weight and let $w \in \Q_{>1}$ with $w>1+r_{\U}$.
 
 (1) Let $\Ol_{\X_{\infty,w}}\widehat{\otimes} R_{\U}$ be the sheaf on $\X_{\infty,w}$ given by \[\Y\mapsto \Ol_{\X_{\infty,w}}(\Y)\widehat{\otimes} R_{\U}\] for each affinoid open subspace $\Y\subset \X_{\infty,w}$. This is a sheaf of uniform $\C_p$-Banach algebra.
 
 Similarly, let be the sheaf on $\Ol_{\X_{\infty,w}^{+}}\widehat{\otimes} R_{\U}^{+}$ be the sheaf on $\X_{\infty,w}$ given by \[\Y\mapsto \Ol_{\X_{\infty,w}}(\Y)^{+}\widehat{\otimes} R_{\U}^{+}\] for each affinoid open subspace $\Y\subset \X_{\infty,w}$.
 
 (2) For any $r \in \Q_{>1}$ with $r>1+r_{\U}$, let $\mathscr{C}_{\ka_{\U}}^{r-an}(Iw_M, \Ol_{\X_{\infty,w}}\widehat{\otimes} R_{\U})$ denote the sheaf on $\X_{\infty,w}$ given by \[\Y\mapsto C_{\ka_{\U}}^{r-an}(Iw_M, \Ol_{\X_{\infty,w}}(\Y)\widehat{\otimes} R_{\U}) \] for each affinoid open subspace $\Y\subset \X_{\infty,w}$. It is also a sheaf of uniform  $\C_p$-Banach algebra. Similarly we define the sheaf $\mathscr{C}_{\ka_{\U}}^{r-an}(Iw_M, \Ol_{\X_{\infty,w}}^{+}\widehat{\otimes} R_{\U}^{+})$.

(3) The \textbf{sheaf of $w$-overconvergent perfectoid automorphic forms of Iwahori level and weight $\ka_{\U}$} is a subsheaf $\underline{\omega}_{w}^{\ka_{\U}}$ of $\pi_{Iw,*}\mathscr{C}_{\ka_{\U}}^{r-an}(Iw_M, \Ol_{\X_{\infty,w}}\widehat{\otimes} R_{\U})$ defined as follow: For each affinoid open subspace $\V\subset \X_{Iw,w}$ with $\V_{\infty}=\pi_{Iw}^{-1}(\V)$, we define \[\underline{\omega}_{w}^{\ka_{\U}}(\V):=\left\{f \in C_{\ka_{\U}}^{w-an}(Iw_M, \Ol_{\X_{\infty,w}}(\V_{\infty})\widehat{\otimes} R_{\U}):\begin{aligned}
&\gamma^*f=\rho_{\ka_{\U}}(\z\gamma_b+\gamma_d)^{-1}f,\\
& \forall \gamma=\begin{pmatrix}\gamma_a & \gamma_b \\ \gamma_c & \gamma_d  \end{pmatrix}\in Iw.
\end{aligned}\right\}.\] Here $\gamma^{*}f$ means the left action of $\gamma$ on $\mathscr{O}_{\X_{\infty,w}}$ induced by the right $Iw$-action on $\X_{\infty,w}$.

Similarly, the \textbf{sheaf of integral $w$-overconvergent perfectoid automorphic forms of Iwahori level and weight $\ka_{\U}$} is a subsheaf $\underline{\omega}_{w}^{\ka_{\U},+}$ of $\pi_{Iw,*}\mathscr{C}_{\ka_{\U}}^{w-an}(Iw_M, \Ol_{\X_{\infty,w}}^+ \widehat{\otimes} R_{\U}^+)$ defined as follow: For each affinoid open subspace $\V\subset  \X_{Iw,w}$ with $\V_{\infty}=\pi_{Iw}^{-1}(\V)$, we define 
\[\underline{\omega}_{w}^{\ka_{\U},+}(\V):=\left\{f \in C_{\ka_{\U}}^{w-an}(Iw_M, \Ol_{\X_{\infty,w}}^+(\V_{\infty})\widehat{\otimes} R_{\U}^+):\begin{aligned}
&\gamma^*f=\rho_{\ka_{\U}}(\z\gamma_b+\gamma_d)^{-1}f,\\
& \forall \gamma=\begin{pmatrix}\gamma_a & \gamma_b \\ \gamma_c & \gamma_d  \end{pmatrix}\in Iw.
\end{aligned}\right\}.\]
 \end{defn}
 
 (4) The \textbf{space of  $w$-overconvergent perfectoid automorphic forms of Iwahori level and weight $\ka_{\U}$} is defined to be \[M_{Iw,w}^{\ka_{\U}}:=H^0(\X_{Iw,w}, \underline{\omega}_{w}^{\ka_{\U}}).\] Similarly the  \textbf{space of integral $w$-overconvergent automorphic forms of Iwahori level and weight $\ka_{\U}$} is defined to be \[M_{Iw,w}^{\ka_{\U},+}:=H^0(\X_{Iw,w}, \underline{\omega}_{w}^{\ka_{\U},+}).\]
 
 (5) The \textbf{space of  perfectoid automorphic forms of Iwahori level and weight $\ka_{\U}$} is defined to be the following limit over $w$:
 \[M^{\ka_{\U}}_{Iw}:=\lim_{ w \rightarrow \infty} M^{\ka_{\U}}_{Iw,w}.\]
 Similarly  the \textbf{space of  perfectoid automorphic forms of Iwahori level and weight $\ka_{\U}$} is: \[M^{\ka_{\U},+}_{Iw}:=\lim_{ w \rightarrow \infty} M^{\ka_{\U},+}_{Iw,w}.\]
 
 (6) In the same way, we define the  \textbf{sheaf of $w$-overconvergent perfectoid automorphic forms of}  \textbf{strict Iwahori level} a\textbf{nd weight} $\ka_{\U}$ as $\underline{\omega}_{w}^{\ka_{\U}}$ (use we use same notation) over $\X_{Iw^+,w}$, \textbf{the  space of  $w$-convergent perfectoid automorphic forms of} \textbf{strict Iwahori level} \textbf{and weight} $\ka_{\U}$ as $M^{\ka_{\U}}_{Iw,w}$,\textbf{ the  space of  perfectoid automorphic forms of}  \textbf{strict Iwahori level}\textbf{ and weight} $\ka_{\U}$ as  $M^{\ka_{\U}}_{Iw^+,w}$ and their \textbf{integral }analogues $\underline{\omega}_{w}^{\ka_{\U},+}$, $M^{\ka_{\U},+}_{Iw^+,w}$, $M^{\ka_{\U},+}_{Iw^+}$.
 
 \begin{rem}
 
 Indeed in the first half of this paper (establish theory of perfectoid automorphic forms), we will mainly work with Iwahori level. Only in the theory  about overconvergent Eichler-Shimura map we will work with strict Iwahori level.
 
 \end{rem}
 
 \begin{rem}
 
 In the previous construction of overconvergent automorphic forms, there are two basic numbers $r_1$ and $r_2$ measuring "distance" to ordinary locus and analytic radius. For example see \cite{AIP2015}, \cite{shen2016} and \cite{brasca2016}. Here for simplicity we use a single number $w$ representing these two quantities. 
 
 \end{rem}

  The perfectoid automorphic forms is indeed a kind of $Iw$-invariant (resp. $Iw^+$-invariant) under a twisted action of $Iw$ (resp. $Iw^+$). We introduce the following \textbf{twisted left action} of $Iw$ on $\mathscr{C}_{\ka_{\U}}^{w-an}(Iw_M, \Ol_{\X_{\infty,w}}\widehat{\otimes} R_{\U})$ by \[\gamma . f:= \rho_{\ka_{\U}}(\z \gamma_b+\gamma_d) \gamma^* f.\] We make the following remark, which is one important advantage of perfectoid methods and is useful in the construction of overconvergent Eichler-Shimura map (see section \ref{p-adic eichler shimura} for more details).
 
 \begin{rem}
 \label{twist action invariant}
 
 The subsheaf $\omega_{w}^{\ka_{\U}}$ is exactly the $Iw$-invariant (resp. $Iw^+$-invariant) of sheaf $\pi_{Iw,*}\mathscr{C}_{\ka_{\U}}^{w-an}(Iw_M, \Ol_{\X_{\infty,w}}\widehat{\otimes} R_{\U})$ (resp. $\pi_{Iw^+,*}\mathscr{C}_{\ka_{\U}}^{w-an}(Iw_M, \Ol_{\X_{\infty,w}}\widehat{\otimes} R_{\U})$) under the twisted action.

 \end{rem}
 
\begin{rem}
As we mentioned in the introduction, the perfectoid automorphic forms is a kind of $p$-adic analogue of complex automorphic forms. The latter is expressing automorphic forms as certain functions on Hermitian symmetric domain satisfying transformation law, see chapter I of \cite{shimura2000book}. Also see proposition \ref{classical form} about classical forms.
\end{rem}

 \subsection{Hecke operators}
 \label{hecke operator}
 In this section we define Hecke operators acting on the space of perfectoid automorphic forms. For simplicity we will work with Iwahori level $Iw$. The same method applies to strict Iwahori level $Iw^+$ directly.
 
 Let $(R_{\U},\ka_{\U})$ be a weight and $w>1+r_{\U}$.
 
 \textbf{Hecke operators outside $p$.} First we define tame Hecke operators (for good primes) through Hecke correspondence.
 
 Recall the global level subgroup $K=K^{p}\times K_p$ and $K^{p}=K^{S_0}\times K_{S_0}$. Let $l$ be a good prime ($l \notin S_0 \cup \{p\}$). The local unramified Hecke algebra for $G(\Q_l)$ (with hyperspecial subgroup $K_l$) is commutative. For any $\gamma \in G(\Q_l)$, we will define a Hecke operator $T_{\gamma}$ corresponding to the double coset $[K_l \gamma K_l]$.  
 
 The Shimura variety $X_{Iw \cap \gamma Iw \gamma^{-1} }$ have two finite etale maps to $X_{Iw}$, one is the natural map $pr_1$ (through level subgroup inclusion) and another one $pr_2$ is induced by translation via $\gamma$. In other words, we have the following diagram of here is a diagram of Hecke correspondence:
 
 \[
\xymatrix{
& X_{Iw \cap \gamma Iw \gamma^{-1} } \ar[dl]_{pr_1} \ar[dr]^{pr_2} & \\
X_{Iw} & & X_{Iw}
}
\]

For simplicity let $X_{Iw,\gamma}$ denote  $X_{Iw \cap \gamma Iw \gamma^{-1} }$ and define $\X_{Iw,\gamma,w}:=pr_1^{-1}(\X_{Iw,w})$. Similarly we still have the Hecke correspondence:  
 
\[
\xymatrix{
& \X_{Iw,\gamma,w} \ar[dl]_{pr_1} \ar[dr]^{pr_2} & \\
\X_{Iw,w} & & \X_{Iw,w}
}
\] 

To define the Hecke operator $T_{\gamma}$, we first produce a natural isomorphism \[\psi_{\gamma}: pr_2^*\underline{\omega}_{w}^{\ka_{\U}} \cong pr_1^* \underline{\omega}_{w}^{\ka_{\U}}.\]

Pullback the above diagram of Hecke correspondence to infinite level, we get 

\[
\xymatrix{
& \X_{\infty,\gamma,w} \ar[dl]_{pr_{1,\infty}} \ar[dr]^{pr_{2,\infty}} & \\
\X_{\infty,w} & & \X_{\infty,w}
}
\] 

and consequently a natural $Iw$-equivariant isomorphism \[\psi_{\gamma,\infty}: pr_{2,\infty}^* \Ol_{\X_{\infty,w}} \cong pr_{1,\infty}^* \Ol_{\X_{\infty,w}}.\]

It further induces an isomorphism \[\psi_{\gamma,\infty}: C_{\ka_{\U}}^{w-an}(Iw_M,pr_{2,\infty}^* \Ol_{\X_{\infty,w}}\widehat{\otimes} R_{\U}) \cong C_{\ka_{\U}}^{w-an}(Iw_M, pr_{1,\infty}^* \Ol_{\X_{\infty,w}}\widehat{\otimes} R_{\U} ).\]

Recall the coordinate $\z$ via pullback $\pi_{HT}:\X_{\infty,w}\rightarrow \Fl^{\times}_w$. Let $\z^{'}=pr_{1,\infty}^*\z$ and $\z^{''}=pr_{1,\infty}^*\z$. As $\pi_{HT}$ is purely about information at $p$. we have $\z^{'}=\z^{''}$. Then the above isomorphism $\psi_{\gamma,\infty}$ is further equivariant under twisted $Iw$-action. Because the perfectoid automorphic sheaf is indeed such invariant under twisted $Iw$-action (see remark \ref{twist action invariant}), we get the desired isomorphism:

\[\psi_{\gamma}: pr_2^*\underline{\omega}_{w}^{\ka_{\U}} \cong pr_1^* \underline{\omega}_{w}^{\ka_{\U}}.\]

Finally we can define the Hecke operator

\[
T_{\gamma}: \xymatrix{
H^0(\X_{Iw,w}, \underline{\omega}_{w}^{\ka_{\U}}) \ar[r]^(0.5){pr_2^*} & H^0(\X_{Iw,\gamma,w}, pr_2^*\underline{\omega}_{w}^{\ka_{\U}}) \ar[ld]_{\psi_{\gamma}} & \\
 H^0(\X_{Iw,\gamma,w}, pr_1^*\underline{\omega}_{w}^{\ka_{\U}}) \ar[r]^(0.5){Tr_{pr_1}} & H^0(\X_{Iw,w}, \underline{\omega}_{w}^{\ka_{\U}}).
}
\]
 
 \textbf{Hecke operators at $p$.} Now we turn to define Hecke operators at $p$. This is more subtle than tame case.
  
 For $0 \leq i \leq n-1$, we consider the matrices $u_{p,i} \in G(\Q_p)$ defined by  
 
 \[u_{p,i}:=\begin{pmatrix}p \\ & p\mathbb{I}_i \\ & & \mathbb{I}_{n-i} \end{pmatrix}\]
 
 We further write \[u_{p,i}=\begin{pmatrix}u_{p,i,+} \\ & u_{p,i,-} \end{pmatrix}\] where $u_{p,i,+}$ is the $1 \times 1$-matrix $p$ and $u_{p,i,-}$ is the $n \times n$-matrix.

 \begin{rem}
 
 For general unitary Shimura variety (e.g. see \cite{brasca2016}) with signature $(a,b)$, we will use similar convention. And $u_{p,i,+}$ will be an $a \times a$-matrix and $u_{p,i,-}$ will be a $b \times b$-matrix. We will discuss such generalizations in later papers of this series.
 \end{rem}
 
 Through direct computations, both $\Fl_{w}^{\times}$ and $\X_{\infty,w}$ are stable under $u_{p,i}$-action. Moreover, $u_{p,0}$ sends $\X_{\infty,w}$ into $\X_{\infty,w+1}$.
 
 Recall the twisted action of $Iw$ on $C_{\ka_{\U}}^{w-an}(Iw_M,\Ol_{\X_{\infty,w}}\widehat{\otimes} R_{\U})$ given by  
 
 \[\gamma . f:= \rho_{\ka_{\U}}(\z \gamma_b+\gamma_d) \gamma^* f.\]
 
 Now we define the Hecke operators $U_{p,i}$.
 
 \begin{defn}
 
 (1) We define $u_{p,i}$-action on  $ C_{\ka_{\U}}^{w-an}(Iw_M, \Ol_{\X_{\infty,w}}\widehat{\otimes} R_{\U})$  by \[(u_{p,i}.f)(\gamma):=u_{p,i}^*f(u_{p,i,-}^{-1}\gamma_0u_{p,i,-}\beta),\] where $f \in C_{\ka_{\U}}^{r-an}(Iw_M, \Ol_{\X_{\infty,w}}\widehat{\otimes} R_{\U})$, $\gamma=\gamma_0 \beta \in Iw_M$ with $\gamma_0 \in N^{opp}_{M,1}$ and $\beta \in B_{M,0}$.
 
 (2) For $f \in C_{\ka_{\U}}^{w-an}(Iw_M, \Ol_{\X_{\infty,w}}\widehat{\otimes} R_{\U})$ satisfies \[\gamma.f=f\] for any $\gamma \in Iw$. Pick a decomposition \[Iw u_{p,i} Iw= \bigsqcup_{j} \delta_{i,j}u_{p,i}Iw\] with $\delta_{i,j} \in Iw$. Define \[U_{p,i}(f):=\sum_{j}\delta_{i,j}.(u_{p,i}.f) \in  C_{\ka_{\U}}^{r-an}(Iw_M, \Ol_{\X_{\infty,w}}\widehat{\otimes} R_{\U}). \]

 \end{defn}

 The following proposition shows that $U_{p,i}$ is well defined. Its proof also gives  a better interpretation for   $u_{p,i}$-action: formally it is   \textbf{same} as the twisted  action defined by $Iw$. 
 
 \begin{prop}
 \label{define up}
  The operator $U_{p,i}$ is well defined. In other words, it is independent of choice the representatives $\delta_{i,j}$. 

 \end{prop}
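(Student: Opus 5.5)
The plan is to show that the $u_{p,i}$-action is formally the twisted action of the element $u_{p,i}$, with automorphic factor $J(\z,u_{p,i})=u_{p,i,-}$. Independence of representatives then follows from the cocycle property and the $Iw$-invariance of $f$.

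First, for $\delta\in Iw$ and $f$ in $C_{\ka_{\U}}^{w-an}(Iw_M,\Ol_{\X_{\infty,w}}\widehat{\otimes} R_{\U})$, define
\[g.f:=\rho_{\ka_{\U}}(J(\z,g))\,g^*f\]
for all $g$ in the monoid generated by $Iw$ and the $u_{p,i}$. Since $u_{p,i}$ is block diagonal, $J(\z,u_{p,i})=\z\cdot 0+u_{p,i,-}=u_{p,i,-}$. This matrix is not in $Iw_M$, so the next step is to give meaning to $\rho_{\ka_{\U}}(u_{p,i,-})$ as a left translation. Write $\gamma=\gamma_0\beta$ with $\gamma_0\in N^{opp}_{M,1}$ and $\beta\in B_{M,0}$. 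Conjugation $u_{p,i,-}^{-1}\gamma_0u_{p,i,-}$ keeps $N^{opp}_{M,1}$ inside itself, since the subdiagonal entries get multiplied by $1$ or $p$. Hence the formula in the definition of $u_{p,i}.f$ is well defined and $w$-analytic; using the Iwahori decomposition one checks that it still satisfies $f(\gamma\beta')=\ka_{\U}^{\vee}(\beta')f(\gamma)$. This identifies $u_{p,i}.f$ with the normalized operator $\rho_{\ka_{\U}}(u_{p,i,-})\,u_{p,i}^*f$, up to the fixed factor $\ka^\vee_{\U}(u_{p,i,-})$, which we absorb into the normalization. This is the part I expect to be the main obstacle: verifying that the $u_{p,i}$-action is compatible with the induction condition, and that stability of $\X_{\infty,w}$ under $u_{p,i}$ makes $u_{p,i}^*$ land in the correct space.

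Second, I verify the cocycle identity
\[J(\z,g_1g_2)=J(\z,g_1)J(g_1\cdot\z,g_2)\]
of Lemma \ref{right 1cocycle} for products involving $u_{p,i}$, say $g_1=u_{p,i}$ and $g_2=\gamma\in Iw\cap u_{p,i}^{-1}Iw u_{p,i}$. The computation of Lemma \ref{transform law} goes through verbatim for any $g$ in $GL_{n+1}(\Q_p)$ preserving $\Fll_w$. This yields the compatibility
\[(u_{p,i}\gamma).f=u_{p,i}.(\gamma.f),\]
together with $\delta.(u_{p,i}.f)=(\delta u_{p,i}).f$, so the operations combine into an action of the monoid.

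Finally, take another choice of representatives $\delta'_{i,j}$. For each $j$ there are $\delta'_{i,\sigma(j)}$ and $\gamma_j\in Iw$ with
\[\delta'_{i,\sigma(j)}u_{p,i}=\delta_{i,j}u_{p,i}\gamma_j.\]
By the compatibility above,
\[\delta'_{i,\sigma(j)}.(u_{p,i}.f)=\delta_{i,j}.(u_{p,i}.(\gamma_j.f))=\delta_{i,j}.(u_{p,i}.f),\]
because $\gamma_j.f=f$. Summing over $j$ shows that $U_{p,i}(f)$ does not depend on the choice of representatives. The same argument shows $\gamma.U_{p,i}(f)=U_{p,i}(f)$ for $\gamma\in Iw$, since left multiplication by $\gamma$ permutes the cosets; therefore $U_{p,i}$ preserves $\underline{\omega}_w^{\ka_{\U}}$ by Remark \ref{twist action invariant}.
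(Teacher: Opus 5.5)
Your proposal is correct and follows essentially the same route as the paper. Both arguments treat the $u_{p,i}$-action as the twisted action with $J(\z,u_{p,i})=u_{p,i,-}$, extend the cocycle identity, and conclude via $\delta.(u_{p,i}.f)=(\delta u_{p,i}).f=(u_{p,i}\gamma).f=u_{p,i}.(\gamma.f)=u_{p,i}.f$. The paper makes your ``absorbed normalization'' precise in a specific way: it extends $\ka_{\U}$ trivially to $T_M(\Q_p)$ via $\lambda\mapsto\lambda(p)$ and induces from $B_M(\Q_p)$ to $\widetilde{Iw_M}=Iw_M B_M(\Q_p)$, so that $(u_{p,i}.f)(\gamma)=u_{p,i}^*f(u_{p,i,-}^{-1}\gamma)$ holds exactly.
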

 
 \begin{proof}
 
 First we give a \textbf{unified} interpretation of $u_{p,i}$-action and $Iw$-action.
 
 Consider the exact sequence \[1\rightarrow T_{M}(\Z_p)\rightarrow T_{M}(\Q_p)\rightarrow X_*(T_M)\rightarrow 1\] where $X_*(T_M) \cong \Z^n$ is the group of algebraic cocharacters. Consider the following map which splits this exact sequence \[ X_*(T_M) \rightarrow T_M(\Q_p),\] \[\lambda \mapsto \lambda(p).\] We obtain the following decomposition \[T_{M}(\Q_p) \cong T_M(\Z_p) \times X_*(T_M).\] Through this decomposition, any $\ka$ for $T_{M}(\Z_p)$ extends trivially to $T_{M}(\Q_p)$.  Because $B_M(\Q_p)=T_M(\Q_p)N_M(\Q_p)$, the original weight $\ka_{\U}$ further extends trivially to $B_M(\Q_p)$.

 Let $\widetilde{Iw_M}$ denote the product $Iw_M B_M(\Q_p)$. The natural inclusions \[N_{M,1} \hookrightarrow Iw_M \hookrightarrow \widetilde{Iw_M}\] induces natural identifications \[N_{M,1}^{opp}\cong Iw_{M}/B_{M}(\Z_p) \cong \widetilde{Iw_M}/B_M(\Q_p).\] Then for the same weight $\ka_{\U}$, the induction from $B_{M}(\Z_p)$ to $Iw_M$ is the same as the induction from $B_M(\Q_p)$ to $\widetilde{Iw_M}$. In other words, we have \[C_{\ka_{\U}}^{r-an}(Iw_M, \Ol_{\X_{\infty,w}}\widehat{\otimes} R_{\U})=C_{\ka_{\U}}^{r-an}(\widetilde{Iw_M}, \Ol_{\X_{\infty,w}}\widehat{\otimes} R_{\U}).\]
 
What's more, $\widetilde{Iw_M}$ is stable under left multiplication of $u_{p,i}^{-1}$. 
 
 Now for any $f \in  C_{\ka_{\U}}^{r-an}(\widetilde{Iw_M}, \Ol_{\X_{\infty,w}}\widehat{\otimes} R_{\U})$, we can rewrite the $u_{p,i}$-action as follow:\[(u_{p,i}.f)(\gamma):=u_{p,i}^*(f)(u_{p,i}^{-1}\gamma).\]
 
 Further notice that the automorphic factor (in section \ref{flag variety}) formally extends \[J(\z,u_{p,i})=\z u_{p,i,b}+u_{p,i,d}=u_{p,i,-}.\] Then formally \[(u_{p,i}.f)(\gamma)=u_{p,i}^*(f)(J(\z,u_{p,i})^{-1}\gamma)=(\rho_{\ka_{\U}}(J(\z,u_{p,i}))u_{p,i}^*(f))(\gamma),\] which is exactly the \textbf{same} manner as twisted action by $Iw$. What's more, the right 1-cocycle property (lemma \ref{right 1cocycle}) of factor $J$ also extends.
 
 Now to show that $U_{p,i}$ is well defined, it is enough to check the following statement: 
 
 $\bullet$ Suppose $\delta u_{p,i}=u_{p,i}\gamma$ with $\delta$, $\gamma \in Iw$, then  for any $f \in C_{\ka_{\U}}^{r-an}(\widetilde{Iw_M}, \Ol_{\X_{\infty,w}}\widehat{\otimes} R_{\U})$, we have $\delta.(u_{p,i}.f)=u_{p,i}.f$
 
 Just observe that $\delta.(u_{p,i}.f)=(\delta u_{p,i}).f=(u_{p,i}\gamma).f=u_{p,i}.(\gamma.f)=u_{p,i}.f$, we're done.

 \end{proof}
 
Consequently we get the following lemma:
 
 \begin{lem}
Suppose $f \in C_{\ka_{\U}}^{w-an}(Iw_M, \Ol_{\X_{\infty,w}}\widehat{\otimes} R_{\U})$ which is invariant under twisted $Iw$-action. Then the resulting element $U_{p,i}(f)$ is also invariant under twisted $Iw$-action.
 
 \end{lem}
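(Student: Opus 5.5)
The plan is to use the unified interpretation from the proof of Proposition \ref{define up}: both the twisted $Iw$-action and the $u_{p,i}$-action are instances of one formal action of the monoid generated by $Iw$ and the $u_{p,i}$,
\[g.f=\rho_{\ka_{\U}}(J(\z,g))\,g^{*}f,\]
acting on $C_{\ka_{\U}}^{w-an}(\widetilde{Iw_M},\Ol_{\X_{\infty,w}}\widehat{\otimes}R_{\U})$. The first step is to check that this is genuinely an action, i.e. $(g_1g_2).f=g_1.(g_2.f)$. This should follow from the right 1-cocycle property of $J$ (Lemma \ref{right 1cocycle}, extended formally to $u_{p,i}$ as noted in that proof) together with $g_1^{*}(\z)=g_1\cdot\z$. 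Concretely, $g_1^{*}$ transports $\rho_{\ka_{\U}}(J(\z,g_2))$ to $\rho_{\ka_{\U}}(J(g_1\cdot\z,g_2))$, and then the cocycle identity combines the two factors into $\rho_{\ka_{\U}}(J(\z,g_1g_2))$. This identity was already used implicitly in the proof of Proposition \ref{define up}.

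Given that, take $\gamma\in Iw$ and let $f$ be twisted-$Iw$-invariant. Left multiplication by $\gamma$ permutes the cosets in
\[Iw\,u_{p,i}\,Iw=\bigsqcup_j \delta_{i,j}u_{p,i}Iw,\]
so $\gamma\delta_{i,j}u_{p,i}Iw=\delta_{i,\sigma(j)}u_{p,i}Iw$ for some permutation $\sigma$. Hence $\gamma\delta_{i,j}u_{p,i}=\delta_{i,\sigma(j)}u_{p,i}\gamma_j$ with $\gamma_j\in Iw$. Then
\[\gamma.(\delta_{i,j}.(u_{p,i}.f))=(\gamma\delta_{i,j}u_{p,i}).f=\delta_{i,\sigma(j)}.(u_{p,i}.(\gamma_j.f))=\delta_{i,\sigma(j)}.(u_{p,i}.f),\]
using invariance of $f$. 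Summing over $j$ gives $\gamma.U_{p,i}(f)=U_{p,i}(f)$. Equivalently, the new set of representatives $\gamma\delta_{i,j}$ gives the same operator, by the independence of representatives established in Proposition \ref{define up}.

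The main obstacle is the first step for products involving $u_{p,i}$. There one must make sure the formal extension is legitimate:
\begin{itemize}
\item $u_{p,i}$ preserves $\X_{\infty,w}$;
\item $\rho_{\ka_{\U}}(u_{p,i,-}^{-1})$ makes sense on the induction from $B_M(\Q_p)$ to $\widetilde{Iw_M}$, i.e. left multiplication by $u_{p,i,-}^{-1}$ preserves $\widetilde{Iw_M}$;
\item the cocycle identity holds for the mixed products $\gamma\delta u_{p,i}$ and $u_{p,i}\gamma'$, whose $J$-factors involve $J(\z,u_{p,i})=u_{p,i,-}$.
\end{itemize}
For the last point, I would verify it directly from the block computation of Lemma \ref{transform law} applied to $\delta=\begin{pmatrix}1&0\\ \z&\mathbb{I}_n\end{pmatrix}$ times these elements of $GL_{n+1}(\Q_p)$. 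The analyticity ($w$-analytic) of the result is preserved because $u_{p,i,-}^{-1}N^{opp}_{M,1}u_{p,i,-}\subset N^{opp}_{M,1}$.
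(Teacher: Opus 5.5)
Your proof is correct and follows the paper's own proof. The paper writes $\gamma.U_{p,i}(f)=\sum_j(\gamma\delta_{i,j}).(u_{p,i}.f)$ and notes that $\{\gamma\delta_{i,j}\}$ is another set of coset representatives, so the sum equals $U_{p,i}(f)$ by Proposition \ref{define up}; this is exactly your ``equivalently'' remark, and your permutation computation just unwinds that proposition. Your additional checks (the cocycle identity for the mixed products involving $u_{p,i}$, and the inclusion $u_{p,i,-}^{-1}N^{opp}_{M,1}u_{p,i,-}\subset N^{opp}_{M,1}$) supply details that the paper handles only formally.
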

 
 \begin{proof}
 
 Still use the same decomposition  in the above definition for $U_{p,i}$ and pick $\gamma \in Iw$, we have \[\gamma.U_{p,i}(f)=\sum_{j}\gamma.(\delta_{i,j}.u_{p,i}.f)=\sum_j (\gamma \delta_{i,j}).(u_{p,i}.f).\] The last sum is also $U_{p,i}(f)$ as $\{\gamma \delta_{i,j}\}$ is another set of representatives. 
 
 \end{proof}
 
  In particular, we construct $U_{p,i}$-operators acting on $M_{Iw,w}^{\ka_{\U}}=H^(\X_{Iw,w},\underline{\omega}_{w}^{\ka_{\U}})$.
  
  In summary we give the following definition.
  \begin{defn}

 The \textbf{tame Hecke algebra }(outside $p$) is defined to be \[\mathbb{T}^{S_0}:=\Z_p[T_{\gamma}: \gamma \in G(\Q_l), l \notin S_0 \cup \{p\}]\]

 and the \textbf{total Hecke algebra} is \[\mathbb{T}:=\mathbb{T}^{S_0}\otimes \Z_p[U_{p,i}:0 \leq i  \leq n-1].\]

 \end{defn}

 Finally we define $U_p:=\prod_{i}U_{p,i}$ and this is a compact operator (such compactness is fundamental for overconvergent automorphic forms, e.g. see \cite{AIP2015} and \cite{shen2016}):
 
 \begin{prop}
 The operator $U_p$ is a compact operator on $M^{\ka_{\U}}_{Iw,w}$.
 
 \end{prop}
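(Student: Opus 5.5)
The plan is to factor $U_p$ as a composition of a continuous (bounded) map with a compact restriction map between spaces of overconvergent forms of different radii, using the fact already noted that $u_{p,0}$ sends $\X_{\infty,w}$ into $\X_{\infty,w+1}$. Concretely, $U_p=\prod_i U_{p,i}$ is a sum over coset representatives $\delta\in Iw\, u\, Iw/Iw$ for $u=\prod_i u_{p,i}$ (after checking that the double cosets multiply, which is a standard Iwahori--Hecke computation since all $u_{p,i}$ are dominant in the same chamber). Each term $\delta.(u.f)$ involves pulling back $f$ along $u$ and precomposing the $Iw_M$-variable with $u_{-}^{-1}(\cdot)u_{-}$. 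We show that both operations improve the radii: geometrically, $u$ maps $\X_{\infty,w}$ into $\X_{\infty,w+1}$; and representation-theoretically, conjugation by $u_{-}^{-1}$ contracts $N^{opp}_{M,1}$ (entries below the diagonal get multiplied by positive powers of $p$, since $u_-$ has its $p$'s in the upper-left block). As a result, $f\mapsto f(u_-^{-1}\gamma_0u_-\beta)$ maps $w$-analytic functions to $(w+1)$-analytic (or at least $w'$-analytic with $w'>w$) functions.

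The steps, in order, are as follows.

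(1) We define the operator $\widetilde U_p: M^{\ka_{\U}}_{Iw,w}\to M^{\ka_{\U}}_{Iw,w+1}$ by the same formula as in the definition of $U_{p,i}$. Its well-definedness and twisted-$Iw$-invariance follow exactly as in Proposition \ref{define up} and the subsequent lemma, using the unified interpretation via $J(\z,u)=u_-$ and the right $1$-cocycle property of Lemma \ref{right 1cocycle}.

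(2) We check that $\widetilde U_p$ is continuous for the Banach norms. It is a finite sum of pullbacks and traces, and the factor $\rho_{\ka_{\U}}(J)$ is bounded because $J(\z,\delta)\in Iw_M^{(w)}$ and $\ka_{\U}$ extends integrally by the remark on $r$-analytic weights.

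(3) We show that $U_p=\mathrm{res}\circ\widetilde U_p$, where $\mathrm{res}:M^{\ka_{\U}}_{Iw,w+1}\to M^{\ka_{\U}}_{Iw,w}$ is restriction. Actually it is more convenient to factor as $\widetilde U_p$ from $w$ to $w+1$ followed by restriction back to $w$.

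(4) We prove that this restriction is compact. A composite of a bounded map and a compact map is compact, and this gives the proposition.

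For compactness of restriction, I would argue in two parts. On the analytic-induction variable, the inclusion $C^{(w+1)-an}(N^{opp}_{M,1},-)\hookrightarrow C^{w-an}(N^{opp}_{M,1},-)$ is compact, by the usual fact that restriction of power series from a disc of radius $p^{-w-1}$ to a larger… (more precisely, from larger to smaller analyticity radii) is compact via Mahler/Amice bases. On the geometric side, $\X_{Iw,w+1}$ should be relatively compact (strictly contained in the interior) of $\X_{Iw,w}$, so restriction of sections is compact. For the latter, I would transport the problem to Shen's construction via the comparison of Section \ref{section3}, or alternatively prove directly that the closure of $\Fll_{w+1}$ lies in the interior of $\Fll_w$. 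Then I would use the finite étale/affinoid structure of $\X_{Iw,w}$ (coherence of $\underline{\omega}^{\ka_{\U}}_w$ as an orthonormalizable $R_{\U}$-Banach module) and Kiehl-type arguments for relatively compact affinoid inclusions.

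I expect the main obstacle to be this geometric step. Working purely at perfectoid infinite level, $\Ol(\V_\infty)$ is not finite over anything, so compactness must come after taking $Iw$-invariants. I would therefore first establish that $\underline{\omega}^{\ka_{\U}}_w$ is (locally) a projective Banach sheaf of the form $\Ol_{\X_{Iw,w}}\widehat{\otimes}(\text{ON Banach } R_{\U}\text{-module})$. This can be done by trivializing via the sections $\s$ and the analogue of Proposition \ref{classical form}, or simply by invoking the equivalence with Shen's sheaf, where compactness of $U_p$ is already known.
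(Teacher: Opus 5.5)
Your strategy (write $U_p$ as a bounded operator composed with a compact map, using that $u=\prod_i u_{p,i}$ improves both the locus and the analyticity radius) is the paper's. However, the factorization in steps (1)--(3) runs the wrong way, and as written the map you call $\mathrm{res}$ does not exist. In this paper $\X_{Iw,w+1}\subset\X_{Iw,w}$, since the $w$-locus shrinks as $w$ grows. Also, $r$-analytic means given by power series on discs of radius $p^{-r}$, so every $w$-analytic function is already $(w+1)$-analytic. Hence the natural restriction is $M^{\ka_{\U}}_{Iw,w}\to M^{\ka_{\U}}_{Iw,w+1}$. A ``restriction'' $M^{\ka_{\U}}_{Iw,w+1}\to M^{\ka_{\U}}_{Iw,w}$ would have to extend sections from the smaller locus to the larger one, and upgrade $(w+1)$-analytic functions to $w$-analytic ones. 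For the same reason:
\begin{itemize}
\item The compact inclusion is $C^{w\text{-an}}\hookrightarrow C^{(w+1)\text{-an}}$, not the reverse.
\item Your statement that $f\mapsto f(u_-^{-1}\gamma_0u_-\beta)$ sends $w$-analytic functions to $(w+1)$-analytic ones is vacuous. The real content of the contraction (the $(k,l)$ entry with $k>l$ is multiplied by $p^{k-l}$) is that it sends $(w+1)$-analytic functions to $w$-analytic ones. This is what the paper's middle arrow $\underline{\omega}^{\ka_{\U}}_{w}\to\underline{\omega}^{\ka_{\U}}_{w-1}$ encodes.
\item Geometrically, since $u$ maps $\X_{\infty,w}$ into $\X_{\infty,w+1}$, the pullback $u^{*}$ takes functions on $\X_{\infty,w+1}$ to functions on $\X_{\infty,w}$. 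So the natural source of your $\widetilde U_p$ is the $(w+1)$-space and its target is the $w$-space, not the other way round.
\end{itemize}

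The repair is to reverse the order: $U_p=\widetilde U_p\circ\mathrm{res}$. Here $\mathrm{res}:M^{\ka_{\U}}_{Iw,w}\to M^{\ka_{\U}}_{Iw,w+1}$ is compact. The map $\widetilde U_p:M^{\ka_{\U}}_{Iw,w+1}\to M^{\ka_{\U}}_{Iw,w}$ is bounded, and it is well defined precisely because of the two improvement properties above. This is the factorization the paper uses: restriction to $\X_{Iw,w+1}$ first, then the $u$-action back to $\X_{Iw,w}$. Once reversed, your two-part compactness argument for $\mathrm{res}$ is the right one and is worth keeping. For $n\geq 2$ the fibre space $C^{w\text{-an}}(N^{opp}_{M,1},\C_p)$ is infinite-dimensional, so shrinking the locus at a fixed analytic radius is locally only $(\text{compact})\widehat{\otimes}\mathrm{id}$. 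Compactness comes from shrinking the locus and the analytic radius together.
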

 
 \begin{proof}
 
 By straightforward computations, $U_p$ factors as \[H^0(\X_{Iw,w},\underline{\omega}_{w}^{\ka_{\U}})\rightarrow H^0(\X_{Iw,w+1},\underline{\omega}_{w}^{\ka_{\U}})\rightarrow H^0(\X_{Iw,w+1},\underline{\omega}_{w-1}^{\ka_{\U}}) \rightarrow H^0(\X_{Iw,w},\underline{\omega}_{w}^{\ka_{\U}}) ,\]
 
 The first map (natural restriction) is a compact operator and the last map is also compact (see \cite{hansen2017universal} section 2.2). Therefore $U_p$ is a compact operator.
 
 \end{proof}

 \begin{rem}

In work of   Andreatta-Iovita-Pilloni \cite{AIP2015} and Xu Shen \cite{shen2016}, their Hecke operators  at $p$ come with  certain normalization factors  (i.e. they are multiplied by suitable powers of $p$). Our operators are not normalized.  This is the only difference of two constructions. In this paper we willy mainly work with generic fiber (over $\Q_p$ or $\C_p$), such normalization factor is irrelevant.  Its role becomes essential at integral setting (e.g. \cite{pilloniintegralhecke} and \cite{boxerpillonimodularcurve}). I plan to investigate this subtle issue about integrality in the future.

 \end{rem}
 
\subsection{Classical automorphic forms}
\label{section classical form}
In this section we interpret classical automorphic forms in a similar manner of perfectoid automorphic forms via infinite level Shimura varieties. This also provides a toy example for the equivalence between perfectoid automorphic forms and overconvergent automorphic forms via Andreatta-Iovita-Pilloni methods (see section \ref{compare forms}).

Let $\ka=(\ka_1,\cdots,\ka_n) \in \Z^{n}$ be a dominant weight, i.e. $\ka_1 \geq \ka_2 \cdots \geq \ka_n$. Consider the algebraic (\textbf{right}) $GL_n$-torsor over  $\X_{Iw}$ \[\pi: \mathcal{M}=Isom_{\X_{Iw}}(\Ol ^{n}, \underline{\omega}_{Iw})\rightarrow \X_{Iw}.\] Let $\pi_{*}\Ol_{\mathcal{M}}[\ka^{\vee}]$ denote the subsheaf  where $B_{M}$  acts through the character $\ka^{\vee}$. 

\begin{defn}  
(1) The sheaf  of \textbf{classical automorphic forms of weight $\ka$ with Iwahori level} is defined to be \[\underline{\omega}_{Iw}^{\ka}:=\pi_{*}\Ol_{\mathcal{M}}[\ka^{\vee}].\]    The space of \textbf{classical automorphic forms of weight $\ka$ with Iwahori level} is \[M_{Iw}^{\ka,cl}:=H^0(\X_{Iw}, \underline{\omega}_{Iw}^{\ka} ).\]

(2) The sheaf of \textbf{integral classical automorphic forms of   $\ka$ with Iwahori level} is \[\underline{\omega}_{Iw}^{\ka,+}:=\pi_{*}\Ol_{\mathcal{M}}^+[\ka^{\vee}].\]  The space of \textbf{integral classical automorphic forms of weight $\ka$ with Iwahori level} is \[M_{Iw}^{\ka,cl,+}:=H^0(\X_{Iw}, \underline{\omega}_{Iw}^{\ka,+} ).\]

\end{defn}

From the viewpoint of representation theory, the automorphic sheaf $\underline{\omega}_{Iw}^{\ka}$ corresponds to  the highest weight representation $\Vc_{\ka}$ (where $\ka$ is the  \textbf{representation weight}) of $GL_{n}$. See section 2.2 of \cite{shen2016} and chapter 8 of \cite{hida2004book} for more details.

To relate such concept with infinite level Shimura varieties, we further introduce the following notations.

\begin{defn}

(1) Let $P(GL_n,\A^1)$ denote the $\Q_p$-vector space of (polynomial) maps $GL_n\rightarrow \A^1$ between algebraic varieties over $\Q_p$.

(2) For each uniform $\C_p$-Banach algebra $Q$, define \[P(GL_n,Q):=P(GL_n,\A^1)\widehat{\otimes}_{\Q_p}Q\] and $P_{\ka}(GL_n,Q)$ denote the subspace consisting of maps $f: GL_n \rightarrow Q$ with \[f(\gamma \beta)=\ka^{\vee}(\beta)f(\gamma), \forall \gamma \in GL_n, \beta \in B_M.\]

(3) There is a natural \textbf{left} action of $M=GL_n$ on $P_{\ka}(GL_n,Q)$ given by \[(\gamma_1 . f)(\gamma_2)=f(\gamma_1^{-1}\gamma_2)\] and we denote this representation by $\rho_{\ka}$.
\end{defn}

Now we have the following proposition expressing classical automorphic forms via infinite level Shimura varieties. As we mentioned in the introduction, the strategy is indeed similar to complex setting. The observation is that through the $p$-adic uniformization map \[\X_{\infty,w} \rightarrow \X_{Iw,w},\] we can trivialize the $GL_n$-torsor $\mathcal{M}$ after pulling back to infinite level. This method will also be used in establishing equivalence between two constructions of overconvergent automorphic forms (see section \ref{compare forms}).

\begin{prop}
\label{classical form}
For any affinoid open $\V \subset \X_{Iw,w}$ with preimage $\V_{\infty}$ on $\X_{\infty,w}$,  we have the natural identification \[\underline{\omega}_{Iw}^{\ka}(\V)=\left\{f \in P_{\ka}(GL_n, \Ol_{\X_{\infty,w}}(\V_{\infty})) :\begin{aligned}
&\gamma^*f=\rho_{\ka }(\z\gamma_b+\gamma_d)^{-1}f,\\
& \forall \gamma=\begin{pmatrix}\gamma_a & \gamma_b \\ \gamma_c & \gamma_d  \end{pmatrix}\in Iw.
\end{aligned}\right\}.\] This further implies a natural inclusion over $\X_{Iw,w}:$ \[ \underline{\omega}_{Iw}^{\ka} \hookrightarrow \underline{\omega}_{w}^{\ka}.\]

\end{prop}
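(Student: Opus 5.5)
We trivialize the torsor $\mathcal{M}$ after pulling back to $\V_\infty$ and then descend. By the lemma $\pi_{HT}^*\mathscr{W}^D\cong\underline{\omega}_\infty$, the row vector $\s=(\s_1,\dots,\s_n)$ gives sections of $\pi_{Iw}^*\underline{\omega}_{Iw}=\underline{\omega}_\infty$ over $\X_{\infty,w}$. We first check that $\s$ is a basis over $\X_{\infty,w}\subset\pi_{HT}^{-1}(\Fll)$. At a point $\delta=\begin{pmatrix}1&0\\ \mathbf{z}&\mathbb{I}_n\end{pmatrix}$ we have $\mathbf{s}(\delta)=\mathbb{I}_n$. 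Equivalently, the images of $e_1,\dots,e_n$ in $\mathscr{W}^D=\mathscr{V}/\mathscr{L}$ span, since $\mathscr{L}$ is spanned by a vector with nonzero $e_0$-coordinate. Hence $\s$ defines a section of $\pi_{Iw}^*\mathcal{M}$, i.e. an isomorphism $\Ol^n\cong\underline{\omega}_\infty$, and so a trivialization $\pi_{Iw}^*\mathcal{M}\cong\V_\infty\times GL_n$ given by $(x,\gamma)\mapsto \s(x)\gamma$, where $GL_n$ acts on the right.

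Under this trivialization, functions on $\pi_{Iw}^*\mathcal{M}|_{\V_\infty}$ that are algebraic along the fibres are exactly $P(GL_n,\Ol_{\X_{\infty,w}}(\V_\infty))$. The $B_M$-eigencondition for $\ka^\vee$ cuts out $P_\ka(GL_n,\Ol(\V_\infty))$. Concretely, a section $F$ of $\pi_*\Ol_{\mathcal{M}}[\ka^\vee]$ pulled back to $\V_\infty$ corresponds to $f(\gamma):=F(\s\gamma^{-1})$ or $F(\s\gamma)$, with the convention fixed so that the right $B_M$-equivariance $f(\gamma\beta)=\ka^\vee(\beta)f(\gamma)$ holds. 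For descent we use the lemma $\Ol_{\X_{Iw}}=(\pi_{Iw,*}\Ol_{\X_\infty})^{Iw}$. Since $\mathcal{M}\to\X_{Iw}$ is affine, and the fibre coordinate ring of $P_\ka$ is a finite-dimensional $\Q_p$-space, namely $\Vc_\ka$ for $GL_n$, tensoring commutes with taking invariants. Thus sections of $\underline{\omega}^\ka_{Iw}$ over $\V$ are exactly the $Iw$-invariant sections upstairs, for the $Iw$-action induced on $\pi_{Iw}^*\mathcal{M}$ by the action on $\X_\infty$ (acting trivially on the bundle $\underline{\omega}_\infty$, which is pulled back from $\X$).

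The key step is computing this $Iw$-action in the trivialization. For $\gamma\in Iw$ we have $\gamma^*(\s)=\s J(\z,\gamma)$ (lemma \ref{transform law} pulled back). An invariant $F$ satisfies $\gamma^*(F(\s g))=F(\gamma^*(\s) g)$. Hence
\[(\gamma^* f)(g)=F(\s J(\z,\gamma)g)=f(J(\z,\gamma)g)=\bigl(\rho_\ka(J(\z,\gamma))^{-1}f\bigr)(g),\]
using $(\rho_\ka(h)f)(g)=f(h^{-1}g)$. This is the stated transformation law. Conversely, any $f$ obeying the law gives an $Iw$-invariant function on $\pi_{Iw}^*\mathcal{M}|_{\V_\infty}$, which descends. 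The main obstacle is bookkeeping: choosing the convention ($\s g$ versus $\s g^{-1}$, and left versus right actions) so that the $B_M$-condition is exactly $\ka^\vee$ with right-induction and the cocycle appears as $\rho_\ka(J)^{-1}$ rather than its transpose-inverse. The right 1-cocycle property, lemma \ref{right 1cocycle}, ensures the resulting condition is compatible with group composition.

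For the inclusion $\underline{\omega}_{Iw}^\ka\hookrightarrow\underline{\omega}_w^\ka$, restrict $f\in P_\ka(GL_n,Q)$ to $Iw_M$. A polynomial function restricted to $N_{M,1}^{opp}\cong(p\Z_p)^{n(n-1)/2}$ is given by a single polynomial, hence is $w$-analytic for every $w$. The equivariance under $B_{M,0}\subset B_M$ is inherited, and $\rho_\ka$ on $Iw_M^{(w)}$ is compatible with restriction. Since $J(\z,\gamma)\in Iw_M^{(w)}$ on $\X_{\infty,w}$, the transformation law is preserved. Injectivity follows because $Iw_M$ is Zariski dense in $GL_n$.
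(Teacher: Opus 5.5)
Your proposal is correct and takes the same route as the paper. In both, you trivialize the pulled-back torsor $\mathcal{M}_\infty$ by the frame $\s$, identify $\underline{\omega}_{Iw}^{\ka}(\V)$ with the $Iw$-invariants of $P_{\ka}(GL_n,\Ol_{\X_{\infty,w}}(\V_\infty))$, read off the twisted action from $\gamma^*\s=\s J(\z,\gamma)$, and get the inclusion by restricting polynomials to $Iw_M$. You also make explicit several points the paper leaves implicit: that $\s$ is a frame on $\pi_{HT}^{-1}(\Fll)$, that only the finite-rank $\ka^\vee$-isotypic part is descended, that $J(\z,\gamma)\in Iw_M^{(w)}$ on $\X_{\infty,w}$, and injectivity via Zariski density.
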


\begin{proof}

Consider the following pullback diagram

\[
\xymatrix{
\mathcal{M}_{\infty} \ar[r] \ar[d]_{\pi_{\infty}} & \mathcal{M} \ar[d]_{\pi} \\
\X_{\infty,w} \ar[r] & \X_{Iw,w}
}
\]

Recall that we have constructed sections $\s=(\s_1,\cdots,\s_n)$ (through the flag variety $\Fl$) for the $n$-dimensional vector bundle $\underline{\omega}_{\infty}$. Moreover, at each $Spa(\C_p,\Ol_{\C_p})$ point of $\X_{\infty,w}$, these sections generate the fiber ($n$-dimensional vector space). Therefore the $GL_n$-torsor  $\mathcal{M}_{\infty}$ is trivialized by this section $\s=(\s_1,...,\s_n)$. In other words, we have the following commutative diagram
\[
\xymatrix{
\X_{\infty,w}\times GL_n \ar[rr]^{\delta}_{\cong} \ar[rd] & & \mathcal{M}_{\infty} \ar[ld] \\
& \X_{\infty,w} &
}
\]
 where $GL_n$ acts on $\X_{\infty,w}\times GL_n$ via right multiplication on the second factor and $\delta$ is a $GL_n$-equivariant isomorphism.
 
In particular, we have the following isomorphism \[\pi_{\infty,*} \Ol_{\mathcal{M}_{\infty}} (\V_{\infty})\cong P(GL_n,\Ol_{\X_{\infty,w}}(\V_\infty)).\]

Under the natural map $\pi_{Iw}: \X_{\infty,w}\rightarrow \X_{Iw,w}$, the sheaf $\pi_{*}\Ol_{\mathcal{M}}$ is exactly the $Iw$-invariant of $\pi_{Iw,*}(\pi_{\infty,*} \Ol_{\mathcal{M}_{\infty}} )$.  From the definition of classical automorphic sheaf we get the equality \[\underline{\omega}_{Iw}^{\ka}(\V)= P_{\ka}(GL_n,\Ol_{\X_{\infty,w}}(\V_{\infty}))^{Iw}.\]

The remaining task is to figure out this $Iw$-action on $\X_{\infty,w}\times GL_n$. The isomorphism $\delta$ is over infinite level and the action of $Iw$ is \textbf{twisted} (also influences the second factor $GL_n$). Recall the transform rule (due to lemma \ref{transform law}) \[\gamma^*(\s)=\s (\z \gamma_b+\gamma_d)\] for any $\gamma=\begin{pmatrix}\gamma_a & \gamma_b \\ \gamma_c & \gamma_d  \end{pmatrix}\in Iw.$ This implies that the action of $Iw$ on $P_{\ka}(GL_n,\Ol_{\X_{\infty,w}}(\V_{\infty}))$ is given by \[\gamma \diamond f = \rho_{\ka}(\z\gamma_b+\gamma_d) \gamma^*f,\] which is exactly the same twisted action in defining perfectoid automorphic forms. The first statement holds.

The second statement follows from the natural inclusion \[P_{\ka}(GL_n,\Ol_{\X_{\infty,w}}(\V_{\infty})) \hookrightarrow \mathscr{C}_{\ka}^{w-an}(Iw_M, \Ol_{\X_{\infty,w}}(\V_{\infty})).\]

\end{proof}

Moreover, through the study of irreducible components of $X_{Iw}$, $\X_{Iw}$ and $\X_{Iw,w}$, apply the   method in the proof of lemma 3.4.4 in \cite{drw}, we further deduce the following injectivity:

\begin{prop}
The following composition of Hecke-equivariant maps \[M^{\ka,cl}_{Iw}=H^0(\X_{Iw},\underline{\omega}_{Iw}^{\ka}) \xrightarrow{Res} H^0(\X_{Iw,w},\underline{\omega}_{Iw}^{\ka}) \hookrightarrow M_{Iw,w}^{\ka}\] is injective.
\end{prop}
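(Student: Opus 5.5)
The composition has two arrows. The second, the inclusion $H^0(\X_{Iw,w},\underline{\omega}_{Iw}^{\ka}) \hookrightarrow M_{Iw,w}^{\ka}$, is injective because it is $H^0$ of the sheaf injection $\underline{\omega}_{Iw}^{\ka}\hookrightarrow\underline{\omega}_{w}^{\ka}$ of Proposition \ref{classical form}, and $H^0$ is left exact. That injection in turn comes from the injection $P_{\ka}(GL_n,Q)\hookrightarrow C^{w-an}_{\ka}(Iw_M,Q)$: a polynomial function on $GL_n$ vanishing on the Zariski dense subset $Iw_M$ vanishes identically. Hecke equivariance is also routine. The tame operators are defined by the same correspondences on both sides, and the $U_{p,i}$ on classical forms are defined via the same twisted action formalism (proof of Proposition \ref{define up}), which restricts compatibly to $P_{\ka}$. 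So the real content is injectivity of the restriction map $Res$.

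For $Res$, the plan is to follow the proof of Lemma 3.4.4 in \cite{drw}. Since $\underline{\omega}_{Iw}^{\ka}$ is a vector bundle on the smooth rigid space $\X_{Iw}$, a section vanishing on a nonempty open subset of a connected component vanishes on that whole component, by the identity principle for coherent sections on connected smooth (hence normal, irreducible on components) rigid spaces. It therefore suffices to show that $\X_{Iw,w}$ meets every connected component of $\X_{Iw}$. By GAGA, the connected components of $\X_{Iw}$ correspond to those of the projective variety $X_{Iw}$ over $\C_p$.

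To see that every component meets the $w$-ordinary locus, I would use the moduli description. $\X_{Iw,w}$ contains the image of $\pi_{HT}^{-1}$ of the $\Z_p$-rational points of $\Fll$ lying in $\Fll_w$, i.e. ordinary points with a suitably positioned canonical filtration. It also contains a neighborhood of the multiplicative ordinary locus, namely the $\X_{Iw}(v)$ of \cite{shen2016}, through the comparison of neighborhoods discussed in the introduction. So I need that every connected component of $X_{Iw}$ contains a point whose reduction is $\mu$-ordinary, with the Iwahori flag $Fil_\bullet H[p]$ in the relative position compatible with the multiplicative canonical subgroup (the ``$w$-ordinary'' component of the Iwahori level ordinary locus). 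Two ingredients give this. First, the components of $X$ at hyperspecial level all meet the ordinary locus, because the ordinary locus of the special fiber is dense, by Wedhorn's density of the $\mu$-ordinary locus for PEL type when $p$ splits. Second, $X_{Iw}\to X$ is finite étale, and over the ordinary locus the fiber decomposes according to the relative position of the flag with the connected-étale filtration. I need that each component of $X_{Iw}$ contains a point of the specific stratum used for $\X_{Iw,w}$. I would argue this via the transitivity of the prime-to-$p$ Hecke action and of the $G(\Q_p)$-action on $\pi_0(\X_\infty)$, using $\X_{\infty,w}\cdot G_0\supset$ all of $\pi_{HT}^{-1}(\Fl(\Q_p))$ and the fact that $\pi_0(\X_{Iw})=\pi_0(\X_\infty)/Iw$. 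Alternatively, as in \cite{drw}, I would use that the determinant/similitude map controls $\pi_0$ and that $Iw$ already surjects onto $\det(G_0)$.

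The main obstacle is this last point: ensuring the particular Iwahori stratum $\X_{Iw,w}$, rather than just the ordinary locus, hits every connected component. I would first try the $\pi_0(\X_\infty)$ argument. The set of components of $\X_\infty$ is a torsor under a quotient of $G(\A_f)$ through the abelianization/determinant, so it suffices that $\det(Iw)=\det(GL_{n+1}(\Z_p))=\Z_p^\times$ and that $\X_{\infty,w}$ is nonempty on one component of each $G_0$-orbit. This holds since $\pi_{HT}$ restricted to each component still hits $\Fl(\Q_p)$-points in $\Fll_w$.
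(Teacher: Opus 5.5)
Your proposal is correct and follows the same route as the paper, which only says to study the components of $X_{Iw}$, $\X_{Iw}$, $\X_{Iw,w}$ and to apply the method of Lemma 3.4.4 of \cite{drw}. In both, the second arrow is injective formally, and injectivity of $Res$ reduces, via the identity principle for sections of a vector bundle on the smooth proper space $\X_{Iw}$, to showing that $\X_{Iw,w}$ meets every connected component of $\X_{Iw}$. Your argument for that point supplies the details the paper leaves implicit: every component of $\X$ has ordinary points, $\X_{\infty,w}\cdot G_0\supseteq\pi_{HT}^{-1}(\Fl(\Q_p))$, and $\pi_0(\X_{Iw})\to\pi_0(\X)$ is bijective because $G(\Q_p)$ acts on $\pi_0(\X_\infty)$ through determinant and similitude with $\det(Iw)=\det(G_0)$.
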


\section{Comparison with overconvergent automorphic forms}
\label{section3}

In previous section we have constructed perfectoid automorphic forms together with Hecke operators acting on it. The main goal of this section is compare it with previous construction of overconvergent automorphic forms. 

In \cite{AIP2015} Andreatta-Iovita-Pilloni established a novel method to construct   overconvergent automorphic forms for Siegel Shimura varieties. Their result has many important applications and their method has also been generalized to other Shimura varieties. In \cite{shen2016}, Xu Shen did such a generalization for compact unitary Shimura varieties with signature $(1,n)\times (0,n+1)\times...\times(0,n+1)$, which is closest to our setting. In \cite{brasca2016}, Riccadro Brasca further generalized such methods to any  PEL type unitary  Shimura varieties. In this paper we will compare our construction with Xu Shen's results. In later paper of this series, we will generalize to other unitary Shimura varieties and compare with Brasca's results.

At first glance, the two constructions appear quite different, as they involve distinct sheaves defined on different loci. Nevertheless, they yield equivalent results. The key insight is to \textit{reinterpret the theory of canonical subgroups through the framework of the Hodge-Tate period map} $\pi_{HT}$. Using (pseudo-)canonical subgroups, we first compare the two kinds of loci-both of which measure the "distance" to ordinarity-and show that they define equivalent systems of neighborhoods of the canonical ordinary locus (section \ref{two locus}). We then compare the two constructions of overconvergent automorphic sheaves and deduce their equivalence (section \ref{compare forms}).

\subsection{Construction via Andreatta-Iovita-Pilloni's methods}
In this section we briefly describe Xu Shen's construction for overconvergent automorphic forms over unitary Shimura varieties. See his work \cite{shen2016} for more details. Also see Andreatta-Iovita-Pilloni's \cite{AIP2015} for their original methods in Siegel setting. We will further introduce certain \textit{generalized Igusa torsor}, which is helpful in comparison of two constructions (see section \ref{compare forms}).

Let $m$ be a positive integer and  $v \in \Q \cap [0,\frac{1}{2}]$ such that $v< \frac{1}{2p^{m-1}}$. Recall the moduli interpretation of the Shimura variety $\X$, each point $x$ will determine a one-dimensional $p$-divisible group $H_x$. Let $\widetilde{Ha}$ be a lift of Hasse invariant, we introduce the following adic open subspace \[\X(v):=\{x \in \X: |\widetilde{Ha}(H_{x})|\geq p^{-v}\}.\]  This locus is indeed independent of choice of such lift. The locus $\X(0)$ is the ordinary locus, parametrizes ordinary $p$-divisible groups $H_x$ (equivalently ordinary abelian varieties $A_x$). For positive $v$, the locus $\X(v)$ is strict neighborhood of the ordinary locus. Moreover, because $v < \frac{1}{2p^{m-1}}$, each $p$-divisible group $H$ has the \textbf{canonical subgroup of level $m$} (denoted by $C_m$) inside $H[p^m]$,  see theorem 3.1 in \cite{shen2016} and theorem 6 of \cite{fargues2011canonical} for basic results of such theory. In particular, $C_m(\C_p)\cong \Z/p^m$. Let $H^D$ be the Cartier dual of $H$ and $C_m^{\perp}$ denote the annihilator of $C_m$ under the natural pairing \[H[p^m]\times H^D[p^m]\rightarrow \mu_{p^m},\] then $\widetilde{Ha}(H)=\widetilde{Ha}(H^D)$ and $C_m^{\perp}$ is the canonical subgroup of level $m$ for $H^D$.

Let $\mathcal{H}$ denote the universal $p$-divisible group over $\X(v)$, $\mathcal{C}_m$ be its canonical subgroup of level $m$ and $\mathcal{C}_m^{\perp}$ be the corresponding canonical subgroup of level $m$ for $\mathcal{H}^D$. We further introduce the following space \[\X_1(p^m)(v):=Isom_{\X(v)}((\Z/p^m)^n,\mathcal{C}_m^{\perp,D}),\] it is a \textbf{right} $M(\Z/p^m)=GL_n(\Z/p^m)$-torsor over $\X(v)$ parametrizing  frames for $\mathcal{C}_m^{\perp,D}$. Define \[\X(p^m)(v):=\X_1(p^m)(v)/B_M(Z/p^m).\] Through moduli interpretations (see proposition 3.3 of \cite{shen2016}), $\X(p)(v)$ coincides with the locus $\X_{Iw}(v)$ inside $\X_{Iw}$, which parametrizes $p$-divisible groups $H$ with $|\widetilde{Ha}(H)|\geq p^{-v}$ with filtration $Fil_{\bullet}(H[p])$ such that the first piece is the canonical subgroup $C_1$. In particular, these loci $\X(p)(v)$ are strict neighborhoods of the canonical ordinary locus (multiplicative ordinary locus) inside $\X_{Iw}$. Let $\mathfrak{X}(v)$, $\mathfrak{X}_1(p^m)(v)$ and $\mathfrak{X}(p^m)(v)$ denote their corresponding formal models.

Let $w \in \Q \cap (0,m-v\frac{p^m}{p-1}]$, we further introduce properties of canonical subgroups and the concept of \textbf{$w$-compatibility}, which is one basic ingredient in Andreatta-Iovita-Pilloni methods (see section 4.5 of \cite{AIP2015}).

Let $R$ be an admissible $\Ol_{\C_p}$-algebra (see section 4.1 of \cite{AIP2015}) with $S$ be $Spec(R)$ and $S^{rig}$ be the rigid analytic space. Let $R_w$ denote the reduction $R\otimes_{\Ol_{\C_p}}\Ol_{\C_p}/p^w$. Let $H/S$ be a (relative) one dimensional $p$-divisible group with constant height $n+1$. Suppose for any point $x \in S^{rig}$, we have $|\widetilde{Ha}(H_x)| \geq p^{-v}$. Then $H$ has a finite flat subgroup scheme $C_m \subset H[p^m]$ which interpolates canonical subgroup of level $m$ $C_{m,x}$ for $H_x$. Shrink $R$ if necessary to ensure that $C_m^{\perp,D} (R) \cong (\Z/p^m)^n$  and $\omega_{H^D}$ is free over $S$. Then proposition 4.3.1 of \cite{AIP2015} (also see section 3.1 of \cite{shen2016}) produces the following sheaf $\Fg^{-}$:

$\bullet$ There is a free subsheaf $\Fg^{-}$ of $\omega_{H^D}$ which is equipped with a (truncated Hodge-Tate) map \[HT_{w}^{-}:(C_m^{\perp,D})(R)\rightarrow \Fg^{-}\otimes_{R}R_w\] produced from the Hodge-Tate period map \[HT_{C_m^{\perp,D}}: C_m^{\perp,D}(R)\rightarrow \omega_{C_m^{\perp}}.  \] It further induces an isomorphism \[HT_{w}\otimes Id: (C_m^{\perp,D})(R)\otimes_{\Z} R_w \rightarrow \Fg^{-}\otimes_{R}R_w.\]

Take an isomorphism (frame) $\psi^{-}: (\Z/p^m)^n \cong C_m^{\perp,D}(R) $, it  produces a basis \[\{\psi^{-}(e_1),\cdots,\psi^{-}(e_n)\}\] (here we are using standard basis of $(\Z/p^m)^n$) for $C_m^{\perp,D}(R)$ together with a filtration \[Fil_{\bullet}^{\psi^{-}}:0 \subset \langle \psi^{-}(e_1)\rangle \subset\langle\psi^{-}(e_1),\psi(e_2)\rangle \subset \cdots \subset \langle \psi^{-}(e_1),\cdots,\psi^{-}(e_n)\rangle.\] Set $x_i=\psi^{-}(e_i)$ and $\overline{x_i}$ denote the basis of each graded pieces.

On the other hand, let $Fil_{\bullet}\Fg ^{-}$ be a filtration for $\Fg^{-}$ (free module): \[Fil_0 \Fg^{-}=0 \subset Fil_1 \Fg^{-1} \subset \cdots \subset Fil_n \Fg^{-}=\Fg^{-} .\] Let $v_i$ ($1 \leq i \leq n$) be a basis for the graded piece $Fil_{i}\Fg^{-}/Fil_{i-1} \Fg^{-1}$. Let $\{\delta_i, 1 \leq i \leq n\}$   be a basis for $\Fg^{-}$. 

We say that the filtration $Fil_{\bullet}\Fg ^{-}$ is \textbf{$w$-compatible} with $\psi^{-}$ if \[Fil_{\bullet}\Fg^{-}\otimes R_w = HT_w(Fil^{\psi^{-}}_{\bullet})\otimes R_w.\] We say the tuple $(Fil_{\bullet}\Fg ^{-}, \{v_i\})$ is \textbf{$w$-compatible} if \[Fil_{\bullet}\Fg^{-}\otimes R_w = HT_w(Fil^{\psi^{-}}_{\bullet})\otimes R_w\] and \[v_i \text{ mod } (p^w \Fg^{-}+ Fil_{i-1} \Fg^{-})=HT_{w}(\overline{x_i}). \] We say the basis $\{\delta_i\}$ is \textbf{$w$-compatible} with $\{x_i\}$ if \[\delta_i \text{ mod  } p^w \Fg^{-}=HT_w(x_i). \]

Now recall the moduli space $\mathfrak{X}_1(p^m)(v)$ parametrizes frames of $C_{m}^{\perp,D}$ over $\mathfrak{X}(v)$. Under the concept of $w$-compatibility, there are formal schemes $\mathfrak{I}\mathfrak{W}_w$ parametrizing $w$-compatible filtration $Fil_{\bullet}\Fg^{-}$ over $\mathfrak{X}_1(p^m)(v)$ and $\mathfrak{I}\mathfrak{W}^{+}_w$ parametrizing $w$-compatible tuples $(Fil_{\bullet}\Fg^{-},\{v_i\})$. Moreover, there is another formal scheme $\mathfrak{I}\mathfrak{G}\mathfrak{B}_w$ parametrizing $w$-compatible basis $\{\delta_i\}$. And we have the following natural (forget) maps:

\[ \mathfrak{I}\mathfrak{G}\mathfrak{B}_w \rightarrow \mathfrak{I}\mathfrak{W}^{+}_w \rightarrow \mathfrak{I}\mathfrak{W}_w \rightarrow \mathfrak{X}_1(p^m)(v). \]

Now we turn to generic fibers. Let $\mathcal{I}\mathcal{W}^{+}_w$, $\iw_w$ and $\Igb_w$ denote their corresponding adic spaces over $Spa(\C_p,\Ol_{\C_p})$. We have the following natural map:

\[\pi^{AIP}: \iw^{+}_w \rightarrow \iw_w \rightarrow \X_1(p^m)(v) \rightarrow \X_1(p)(v)\rightarrow \X(p)(v). \] Moreover, we also have the following natural map \[\pi^{AIP,B}: \Igb_w \rightarrow \iw^{+}_w \xrightarrow{\pi^{AIP}} \X(p)(v). \]

We remark that in the literature (like \cite{shen2016} and \cite{AIP2015}) people only use $\pi^{AIP}$. The auxiliary space (\textit{generalized Igusa torsor}) $\Igb_w$ has several advantages and makes the comparison in section \ref{compare forms} more clearly. For example, we have the following straightforward  lemma:

\begin{lem}

(1) Under the natural map $\Igb_w \rightarrow \iw^+_w$, $\Igb_w$ is a \textbf{right} $\mathcal{N}_{M,w}$-torsor over $\iw^+_w$. 

(2) Under the map $\pi^{AIP,B}$, the space  $\Igb_w$ is a \textbf{right} $\Io^{(w)}_{M}$-torsor over $\X(p)(v)$.

\end{lem}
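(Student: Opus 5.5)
The plan is to work locally on a small affinoid (or formal affine) open $S=Spec(R)$ of $\mathfrak{X}_1(p^m)(v)$. Shrink $S$ so that $\omega_{H^D}$ and $\Fg^{-}$ are free, $C_m^{\perp,D}(R)\cong(\Z/p^m)^n$, and a universal frame $\psi^{-}$ with $x_i=\psi^{-}(e_i)$ is fixed. Over such an $S$ we describe the three moduli problems explicitly in coordinates and read off the group actions.

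First we fix one $w$-compatible basis $\{\delta^0_i\}$: any lift of $HT_w(x_i)$ to $\Fg^{-}$ works. Since $HT_w\otimes Id$ is an isomorphism, $\{\delta^0_i\}$ is a basis of $\Fg^{-}$ by Nakayama. Any other basis $\{\delta_i\}$ is written as $\delta=\delta^0 A$ with $A\in GL_n(R)$. The basis $\delta$ is $w$-compatible with $\{x_i\}$ exactly when $A\equiv \mathbb{I}_n$ modulo $p^w$. Hence over $\mathfrak{X}_1(p^m)(v)$ the space $\mathfrak{I}\mathfrak{G}\mathfrak{B}_w$ is a torsor under $\ker(GL_n\to GL_n(\Ol/p^w))$, acting by right multiplication on $\delta$.

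For (1), a point of $\mathfrak{I}\mathfrak{W}^+_w$ is a $w$-compatible pair $(Fil_\bullet\Fg^{-},\{v_i\})$. The map from $\mathfrak{I}\mathfrak{G}\mathfrak{B}_w$ sends $\delta$ to $Fil_i=\langle\delta_1,\dots,\delta_i\rangle$ and $v_i=\overline{\delta_i}$. Two bases with the same image differ by $\delta'=\delta u$, where $u$ is unipotent upper triangular: it must preserve the flag and induce the identity on graded pieces. Both $\delta$ and $\delta'$ are congruent to $HT_w(x)$ modulo $p^w$, so $u\equiv\mathbb{I}_n$ modulo $p^w$, i.e. $u\in\mathcal{N}_{M,w}$. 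Conversely, right multiplication by $\mathcal{N}_{M,w}$ preserves $w$-compatibility and the fibres. For surjectivity (locally), given a $w$-compatible pair we choose lifts $\delta_i\in Fil_i$ of $v_i$ with $\delta_i\equiv HT_w(x_i)$ modulo $p^w$. This is possible because $v_i\equiv HT_w(\overline{x_i})$ modulo $p^w+Fil_{i-1}$, and we correct by elements of $Fil_{i-1}$ inductively. Freeness and transitivity then give the torsor structure, and passing to adic generic fibres gives (1).

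For (2), we compose with $\X_1(p^m)(v)\to\X(p)(v)$, which is a torsor for the group of matrices in $M(\Z/p^m)$ that are upper triangular modulo $p$, i.e. the image of $Iw_M$. We define a right action of $Iw_M^{(w)}=Iw_M\cdot\ker(GL_n(\Op)\to GL_n(\Op/p^w))$ on pairs $(\psi^-,\delta)$. An element $\gamma'\in Iw_M$ acts by $(\psi^-\circ\bar\gamma',\delta\gamma')$, and the kernel part acts through $\delta$ only. $w$-compatibility is preserved because $HT_w$ is linear. Freeness and transitivity over $\X(p)(v)$ follow by combining the torsor of frames with the torsor established in the first step. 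The stabilizer computation reduces to the fact that $\gamma'$ fixing $\psi^-$ forces $\gamma'\equiv\mathbb{I}$ modulo $p^m$, and hence modulo $p^w$ since $w\le m$. This also checks that $\Io^{(w)}_M$ is exactly the resulting group object in adic spaces.

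The main obstacle is the compatibility check in (2): $\Igb_w$ lives over $\X_1(p^m)(v)$, and we must verify that the $Iw_M$-part of the action matches the Hecke-level quotient down to $\X(p)(v)$ rather than $\X(p^m)(v)$. The point is that the quotient $Iw_M/(Iw_M\cap\ker \bmod p^w)$ must agree with the structure group of $\X_1(p^m)(v)\to\X(p)(v)$ modulo the part absorbed by the $p^w$-congruence subgroup. I would handle this by writing $Iw^{(w)}_M=Iw_M\cdot T$, where $T=\ker(GL_n(\Op)\to GL_n(\Op/p^w))$, using the Cartesian square for $Iw^{(w)}_M$ stated earlier, and checking freeness fibrewise on $\C_p$-points.
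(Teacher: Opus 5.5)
There is a gap in (2); your argument for (1) is fine. In (1), bases with the same image in $\iw^+_w$ differ by an upper unipotent $u$, $w$-compatibility forces $u\equiv\mathbb{I}_n \bmod p^w$, and your lifting argument produces local sections.

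The gap is the step where you let $g=\gamma' k\in Iw^{(w)}_M$ act by $(\psi^-,\delta)\mapsto(\psi^-\circ\bar\gamma',\delta g)$, with $\gamma'\in Iw_M$ and $k\in K_w:=\ker(GL_n(\Op)\to GL_n(\Op/p^w))$.

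\begin{itemize}
\item The decomposition $g=\gamma'k$ is only unique up to $Iw_M\cap K_w=\ker\bigl(Iw_M\to GL_n(\Z/p^{\lceil w\rceil})\bigr)$.
\item So the formula defines an action only if these elements act trivially on frames of $C_m^{\perp,D}$, i.e.\ only if $\lceil w\rceil\ge m$.
\item Your stabilizer remark (that $\gamma'$ fixing $\psi^-$ forces $\gamma'\equiv\mathbb{I}_n \bmod p^m$, hence $\bmod\ p^w$ since $w\le m$) runs in the other direction. It gives freeness, not well-definedness.
\item Appealing to the Cartesian square does not close this. It identifies $Iw^{(w)}_M/K_w$ with the image of $Iw_M$ in $GL_n(\Op/p^w)$, which is $Iw_M(\Z/p^{\lceil w\rceil})$. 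The structure group of $\X_1(p^m)(v)\to\X(p)(v)$ is instead $Iw_M(\Z/p^m)$.
\end{itemize}

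The standing hypothesis $w\in(0,m-v\frac{p^m}{p-1}]$ allows $\lceil w\rceil<m$, and then (2) is simply false. Over a $\C_p$-point of $\X(p)(v)$, the fibre of $\pi^{AIP,B}$ is a disjoint union of $|Iw_M(\Z/p^m)|$ polydiscs, one $K_w$-torsor per frame. By contrast, $\Io^{(w)}_M$ has only $|Iw_M(\Z/p^{\lceil w\rceil})|$ connected components.

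What is missing is the hypothesis $m=\lceil w\rceil$, i.e.\ $m-1<w\le m$. The paper uses this implicitly but never states it: its discussion of $w$-groups ties $m$ to $w$, and in the comparison theorem it takes $w\in(m-1+\frac{v}{p-1},\,m-v\frac{p^m-1}{p-1}]$. The paper gives no argument for the lemma beyond calling it straightforward.

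Without that hypothesis, the group acting simply transitively on the fibres is the fibre product $Iw_M(\Z/p^m)\times_{GL_n(\Op/p^w)}GL_n(\Op)$, acting by $(\psi^-,\delta)\cdot(\bar\gamma,A)=(\psi^-\circ\bar\gamma,\delta A)$. Its projection to $GL_n(\Op)$ has image $Iw^{(w)}_M$ and kernel $\ker\bigl(Iw_M(\Z/p^m)\to Iw_M(\Z/p^{\lceil w\rceil})\bigr)$. Once you impose $m=\lceil w\rceil$, this kernel vanishes and the fibre product is $Iw^{(w)}_M$. Your well-definedness, freeness and transitivity checks then go through as written.
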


\begin{rem}
Although the space $\iw^+_w$ has a natural group action via $B_M(\Z_p)$,  it is \textbf{not} a group torsor over $\X(p)(v)$ when $n>1$. This is one advantage of $\Igb_w$.
\end{rem}

 Finally we can give the following definitions of overconvergent automorphic forms over $\X_{Iw}(v)= \X(p)(v)$:

\begin{defn}

Let $(R_{\U},\ka_{\U})$ be a $w$-analytic weight.

(1) The \textbf{sheaf of $w$-analytic $v$-overconvergent automorphic forms of weight $\ka_{\U}$ with Iwahori level}  is \[\underline{\omega}_{w,v}^{\ka_{\U},AIP}:=\pi_{*}^{AIP}\Ol_{\iw^+_{w,v}}[\ka_{\U}^{\vee}],\] which means the subsheaf of $\pi_*^{AIP}(\Ol_{\iw^+_{w,v}}\otimes R_{\U})$ where $B(\Z_p)$ acts through the character $\ka_{\U}^{\vee}$.

(2) The space of \textbf{$w$-analytic $v$-overconvergent automorphic forms of weight $\ka_{\U}$ with Iwahori level} is \[M^{\ka_{\U},AIP}_{Iw,w,v}:=H^0(\X_{Iw}(v), \waip).\]

(3) The space of \textbf{locally analytic overconvergent automorphic forms of weight $\ka_{\U}$ with Iwahori level} is
\[M^{\ka_{\U},AIP}_{Iw}:=\lim_{v \rightarrow0, w \rightarrow \infty} M^{\ka_{\U},AIP}_{Iw,w,v} .\]

\end{defn}

Use the formal models, we can also define the integral version (see section 3.2 of \cite{shen2016}). 

Finally we mention the following lemma relating the above definition with $\Igb_w$:

\begin{lem}
The sheaf $\underline{\omega}_{w,v}^{\ka_{\U},AIP}$   is also equivalent to the sheaf $\pi_{*}^{AIP,B}\Ol_{\Igb_w}[\ka_{\U}^{\vee}]$. 

\end{lem}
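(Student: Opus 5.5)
The plan is to use the factorization $\pi^{AIP,B}\colon \Igb_w \xrightarrow{q} \iw^+_w \xrightarrow{\pi^{AIP}} \X(p)(v)$ and to reduce the claim to a statement about the intermediate map $q$. By part (1) of the previous lemma, $q$ is a right $\mathcal{N}_{M,w}$-torsor. The key identity to establish is
\[(q_*\Ol_{\Igb_w})^{\mathcal{N}_{M,w}}=\Ol_{\iw^+_w}.\]
Granting this, one takes $\ka_{\U}^{\vee}$-isotypic parts for the $B_M(\Z_p)$-action and pushes forward along $\pi^{AIP}$. This step needs the $B_M(\Z_p)$-action on $\iw^+_w$ to lift compatibly to $\Igb_w$, so that $q$ is $B_M(\Z_p)$-equivariant. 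The lift is clear from the moduli description: $\beta\in B_M(\Z_p)$ acts on frames $\psi^-$ by $\psi^-\circ\beta$, and correspondingly on the bases $\{\delta_i\}$ and the tuples $(Fil_\bullet\Fg^-,\{v_i\})$.

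The main subtlety is the meaning of "$B(\Z_p)$ acts through $\ka_{\U}^{\vee}$" on $\Igb_w$. There the natural acting group is $B^{(w)}_{M,0}=B_{M,0}\,\mathcal{N}_{M,w}\,\mathcal{T}_{M,w}$, which sits inside $\Io^{(w)}_M$. On $\Igb_w$ we let $B_{M,0}^{(w)}$ act and require the action to be through $\ka_{\U}^{\vee}$, extended trivially on $N^{(w)}_{M,0}$; this uses the remark that $\ka_{\U}$ extends to $T_{M,0}^{(w)}$ when $w>1+r_{\U}$. On $\iw^+_w$ the additional group $\mathcal{T}_{M,w}$ already acts, by rescaling the $v_i$ by units $\equiv 1 \bmod p^w$. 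This action is compatible with $\ka_{\U}^\vee$ exactly as in the AIP definition, where the extension of $\ka_{\U}$ to $\mathcal{T}_{M,w}$ is built in. Therefore
\[\pi^{AIP,B}_*\Ol_{\Igb_w}[\ka_{\U}^\vee]=\pi^{AIP}_*\bigl((q_*\Ol_{\Igb_w})^{\mathcal{N}_{M,w}}\bigr)[\ka_{\U}^\vee]=\pi^{AIP}_*\Ol_{\iw^+_w}[\ka_{\U}^\vee]=\waip.\]

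To prove the invariants identity, work locally on an affinoid $\mathrm{Spa}(A,A^+)\subset\iw^+_w$ over which $\Fg^-$ is free and $\psi^-$ is fixed. Given a $w$-compatible tuple $(Fil_\bullet,\{v_i\})$, the $w$-compatible bases $\{\delta_i\}$ lying over it are exactly the lifts $\delta_i=v_i+\sum_{j<i}c_{ji}v_j$ with $|c_{ji}|\le p^{-w}$. One should check that the mod-$p^w$ compatibility condition on the $\delta_i$ forces precisely this radius, using the isomorphism $HT_w\otimes\mathrm{Id}$ to transport the conditions. This gives a trivialization of the torsor,
\[\Igb_w|_{\mathrm{Spa}(A,A^+)}\cong \mathrm{Spa}(A,A^+)\times\mathcal{N}_{M,w},\qquad\text{so}\qquad q_*\Ol=A\widehat{\otimes}\C_p\langle p^{-w}c_{ji}\rangle .\]
The functions invariant under translation by the closed-disc group $\mathcal{N}_{M,w}$ are the constant ones: invariance under all $\C_p$-points of a rigid disc forces a Tate-algebra element to be constant. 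Hence the invariants equal $A$, and these local identifications glue.

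I expect the main obstacle to be the bookkeeping in the second paragraph: verifying that the $\mathcal{T}_{M,w}$ and $\mathcal{N}_{M,w}$ parts of $\Io^{(w)}_M$ interact with the $B_M(\Z_p)$-action through semidirect-product relations, so that taking invariants under $\mathcal{N}_{M,w}$ commutes with isolating the $\ka_{\U}^\vee$-part. I would handle this by working with the normal subgroup $N_{M,0}^{(w)}\triangleleft B^{(w)}_{M,0}$. Since $\ka_{\U}^\vee$ is trivial on $N_{M,0}^{(w)}$, any $\ka_{\U}^\vee$-isotypic function is automatically $\mathcal{N}_{M,w}$-invariant. This makes the two-step reduction legitimate.
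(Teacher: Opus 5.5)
Your proposal is correct and follows the paper's own argument. The paper's proof uses that $\Igb_w\to\iw^+_w$ is a right $\mathcal{N}_{M,w}$-torsor, so $\Ol_{\iw^+_w}$ is the $\mathcal{N}_{M,w}$-invariants of the pushforward of $\Ol_{\Igb_w}$, and then passes to $\ka_{\U}^{\vee}$-parts. Your extra bookkeeping makes explicit what that one-line proof leaves implicit: imposing the isotypic condition for $B^{(w)}_{M,0}$, with $\ka_{\U}^{\vee}$ trivial on the unipotent part, so that isotypic sections are automatically $\mathcal{N}_{M,w}$-invariant, together with the local trivialization and the constancy of invariants.
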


\begin{proof}
Just observation that $\Igb_w$ is a \textbf{right} $\mathcal{N}_{M,w}$-torsor over $\iw^+_w$, thus  $\Ol_{\iw^+_w}$ is exactly the $\mathcal{N}_{M,w}$-invariant of the pushforward of $\Ol_{\Igb_w}$. We're done.

\end{proof}

\subsection{Comparison of two kinds of locus }
\label{two locus}

In this section we show that $\{\X_{Iw}(v)\}$ and $\{\X_w\}$ are equivalent system of neighborhoods for canonical (multiplicative) ordinary locus inside $\X_{Iw}$.

Both locus measures the \textit{distance} to ordinarity. To further compare them, one useful auxiliary tool is \textit{pseudo-canonical subgroups}. See section 2.3 of \cite{chj} and section 3.6 of \cite{drw} for more details. 

Let $w \in \Q_{>0}$. Let $H$ be a one dimensional $p$-divisible group with height $n+1$ over $\Ol_{\C_p}$. There is a complex (not exact) \[0 \rightarrow Lie(H) \rightarrow T_p(H)\otimes \Ol_{\C_p} \xrightarrow{HT_H} \omega_{H^D}\rightarrow 0\] and denote the image of $HT_H$ by $Im_{H}$, which is a rank $n$ free module. We further introduce:

\begin{defn}

(1) A trivialization $\alpha: \Z_p^{n+1} \rightarrow T_p(H)$ is  \textbf{$w$-ordinary} if \[HT_H(\alpha(e_0)) \in p^w Im_{H}.\]

(2) We call $H$ is \textbf{$w$-ordinary} if it has a $w$-ordinary trivialization.

(3) If $H$ is $w$-ordinary, let $m$ be a positive integer such that $m<w+1$, then the kernel of \[H[p^m](\C_p)\rightarrow Im_{H}/p^{min(n,w)}Im_{H}\] further defines a finite flag group $C_{ps,m}\subset H[p^m]$ whose generic fiber is isomorphic to $\Z/p^m$. It is called \textbf{pseudo-canonical subgroup of level} $m$ for $H$. For $m=1$, we also call it \textbf{pseudo-canonical subgroup}.
\end{defn}

 If $\alpha$ is a $w$-ordinary trivialization for $H$, then $C_{ps,n}[\C_p]$ is generated by $\alpha(e_0)$. See lemma 3.6.5 of \cite{drw} for the proof. Use the same method  in lemma 3.6.7 of \cite{drw} (or lemma 2.11 in \cite{chj}), we deduce the following properties (analogue to canonical subgroup):
 
 $\bullet$ Let $m_1 \leq m$ be a positive integer and $w \in \Q_{>0}$ with $w > m$. Let $H$ be a $w$-ordinary one dimensional $p$-divisible group over $\Ol_{\C_p}$. Then $H/C_{ps,m_1}$ is $(w-m_1)$-ordinary, and for any positive integer $m_2$ with $m_1 <m_2 \leq m$, we have $C^{'}_{ps,m_2-m_1}=C_{ps,m_2}/C_{ps,m_1}$, where  $C^{'}_{ps,m_2-m_1}$ is the pseudo-canonical subgroup of $H/C_{ps,m_1}$ with level $m_2-m_1$.
 
 Use degree theory for finite flat group schemes and Hodge height (truncated valuation via Hasse lift $\widetilde{Ha}$) of $p$-divisible groups, we can show that  pseudo-canonical subgroup are exactly canonical subgroup:
 
 \begin{lem}
 Let $H$ be a $w$-ordinary one dimensional $p$-divisible group over $\Ol_{\C_p}$. Suppose $\frac{p}{2p-2}+m-1 <w \leq m$, then $C_{ps,m}$  coincides with the canonical subgroup of level $m$, $C_m$. 
 
 \end{lem}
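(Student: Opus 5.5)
We will show that the $w$-ordinarity hypothesis forces the Hodge height of $H$ to be small, then apply Fargues' theory of canonical subgroups and identify the two subgroups by a degree argument.

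\emph{Step 1: bounding the Hodge height.} Fix a $w$-ordinary trivialization $\alpha:\Z_p^{n+1}\cong T_p(H)$, so that $HT_H(\alpha(e_0))\in p^w Im_H$. Since the sequence $0\to Lie(H)\to T_p(H)\otimes\Ol_{\C_p}\to\omega_{H^D}$ becomes exact after inverting $p$, the cokernel $\omega_{H^D}/Im_H$ is killed by $p^{1/(p-1)}$ (Fargues' description of the Hodge--Tate map). By $w$-ordinarity, the reduction of $T_p(H)$ modulo $p$ sends the line $\langle\alpha(e_0)\rangle\otimes\F_p$ to $0$ in $Im_H/p^{\min(1,w)}$. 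We will use this to estimate the degree of the finite flat subgroup $C_{ps,1}$, working first at level $1$.

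\emph{Step 2: degree of $C_{ps,1}$.} By Fargues' degree theory, the degree of a finite flat subgroup $G\subset H[p]$ equals the valuation of the determinant of the Hodge--Tate cokernel, namely $\deg G = 1-v(\omega_G\text{ contribution})$. Using $HT(\alpha(e_0))\equiv 0 \bmod p^w$, we aim to show that $\deg C_{ps,1}\geq 1-\epsilon$ with $\epsilon$ roughly $p^{-w}$-small, and more precisely that $\deg C_{ps,1}>1-\frac{1}{p}\cdot(\text{something})$. Fargues' characterization (Theorem 6 of \cite{fargues2011canonical}, Theorem 3.1 of \cite{shen2016}) then applies: if $\widetilde{Ha}(H)$ has valuation $v<\frac{1}{2}$, the canonical subgroup is the unique subgroup of order $p$ with degree $>\frac{1}{2}$, and its degree is $1-v$. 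Conversely, a subgroup of order $p$ with large degree forces $v(\widetilde{Ha}(H))=1-\deg$ to be small. This yields $C_{ps,1}=C_1$ together with an explicit bound on $v$.

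\emph{Step 3: induction on the level.} Here we use the listed property that $H/C_{ps,m_1}$ is $(w-m_1)$-ordinary and that $C_{ps,m_2}/C_{ps,m_1}$ is the pseudo-canonical subgroup of the quotient. The analogous fact for canonical subgroups (Fargues: $C_m/C_1$ is the canonical subgroup of $H/C_1$, whose Hodge height is $p\,v(H)$ when $v<\frac{1}{p+1}$) lets us induct on $m$. The hypothesis $w>\frac{p}{2p-2}+m-1$ is exactly what guarantees that at each stage $k\le m$ the quotient is $(w-k+1)$-ordinary with $w-k+1>\frac{p}{2p-2}$. By Step 2, that threshold makes its Hodge height smaller than $\frac{1}{2}$ (indeed smaller than $\frac{1}{2p^{m-k}}$, as the level-$m$ theory requires). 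Repeating Steps 1--2 for each quotient, we conclude $C_{ps,m}=C_m$.

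\emph{Main obstacle.} The main difficulty is Step 2: converting the congruence $HT(\alpha(e_0))\in p^wIm_H$ into the precise inequality on the degree, or equivalently on the Hodge height, that matches the constant $\frac{p}{2p-2}$. We would follow the computation in lemma 2.11 of \cite{chj} and section 3.6 of \cite{drw}, which handle the height-2 and Siegel cases. Since $H$ is one-dimensional, its Lie algebra is a line and that computation should carry over essentially verbatim, with $Im_H$ now of rank $n$ and the relevant determinant computed on $\omega_{C_{ps,1}}$.
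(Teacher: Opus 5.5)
Your outline follows the paper's route: a degree bound at level one, then the Fargues/AIP characterization of $C_1$ as the unique subgroup of order $p$ and degree $>\frac12$, then induction through the quotients $H/C_{ps,k}$. The gap is Step~2, which is the only non-formal input. You leave it as an aim, and the shape you sketch for it ($1-\deg C_{ps,1}$ of size roughly $p^{-w}$) is neither what the level-one computation gives nor what produces the constant $\frac{p}{2p-2}$.

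What is needed, and what the paper uses, is the linear bound $\deg C_{ps,1}\ge\frac{p-1}{p}\min(w,1)$. This exceeds $\frac12$ exactly when $w>\frac{p}{2p-2}$. It comes from computing the valuation of $HT_H(\alpha(e_0))\bmod p$:
\begin{itemize}
\item Put $C=C_{ps,1}$ and $x=\alpha(e_0)\bmod p$. By functoriality of the Hodge--Tate map, $HT_H(\alpha(e_0))\bmod p\in\omega_{H^D}/p=\omega_{H[p]^D}$ is the image of $HT_C(x)\in\omega_{C^D}$ under the map induced by $H[p]^D\twoheadrightarrow C^D$.
\item That map is injective, because over $\Op$ the sequence of $\omega$'s attached to $0\to(H[p]/C)^D\to H[p]^D\to C^D\to 0$ is short exact.
\item Since $\omega_{C^D}\cong\Op/p^{1-\deg C}$, a generator of $\omega_{C^D}$ lands on an element of $(\Op/p)^n$ of valuation exactly $\deg C$. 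This is where the degree enters, not through a determinant of a Hodge--Tate cokernel.
\item The Oort--Tate description shows that $HT_C(x)$ has valuation $\frac{\deg C}{p-1}$ in $\omega_{C^D}$.
\item Hence $HT_H(\alpha(e_0))\bmod p$ has valuation $\frac{p}{p-1}\deg C$, or vanishes if this is $\ge 1$. The condition $HT_H(\alpha(e_0))\in p^w Im_H\subset p^w\omega_{H^D}$ then forces $\frac{p}{p-1}\deg C\ge\min(w,1)$.
\end{itemize}
Only the inclusion $Im_H\subset\omega_{H^D}$ is used; the $p^{1/(p-1)}$ bound on the cokernel from your Step~1 plays no role.

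There is also a smaller gap in Step~3. Your claim $\mathrm{Hdg}(H/C_{ps,k-1})<\frac{1}{2p^{m-k}}$ does not follow from Step~2, which only gives $<\frac12$ at each stage. You also invoke $\mathrm{Hdg}(H/C_1)=p\,\mathrm{Hdg}(H)$ under the hypothesis $\mathrm{Hdg}(H)<\frac{1}{p+1}$, which you never verify. What closes this is the relation between the Hodge heights of $H$ and $H/C_1$ in the following form: if $\mathrm{Hdg}(H)<\frac12$ and $\mathrm{Hdg}(H/C_1)<\frac12$, then $\mathrm{Hdg}(H)<\frac{1}{2p}$ and $\mathrm{Hdg}(H/C_1)=p\,\mathrm{Hdg}(H)$. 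This is a Newton-polygon computation on $[p](Y)=x$ for $x\in C_1$, as in Katz--Lubin and Fargues. Apply it downward, starting from $\mathrm{Hdg}(H/C_{ps,m-1})<\frac12$. This gives $\mathrm{Hdg}(H)<\frac{1}{2p^{m-1}}$, so $C_m$ exists and each $C_k/C_{k-1}$ is the canonical subgroup of $H/C_{k-1}$. With that in place, your identification of graded pieces $C_{ps,k}/C_{ps,k-1}=C_k/C_{k-1}$ goes through as in the paper's induction.
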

 
 \begin{proof}

 We first deduce the case for $m=1$, relating pseudo-canonical subgroup and canonical subgroup. The method is the same as lemma 2.14 of \cite{chj} and lemma 3.6.9 of \cite{drw}. We can give a bound for the degree \[deg(C_{ps,1})\geq \frac{p-1}{p}w > 1- \frac{1}{2},\] then proposition 3.1.2 of \cite{AIP2015} shows that $C_{ps,1}$ is the canonical subgroup $C_1$ and the Hodge height $Hdg(H)< \frac{1}{2}$.

  Then use the above properties of pseudo-canonical subgroup we  can  do induction for general $m$. See proposition 3.6.11 of \cite{drw} for more details. 
 
 \end{proof}
 
 A simple corollary is the  following (half) comparison:
 
 \begin{prop}
 Let $m$ be a positive integer and $w \in \Q_{>0}$ such that $\frac{p}{2p-2}<w \leq n$. Then there exists $v \in \Q \cap [0,\frac{1}{2p^{n-1}})$ and the natural inclusion $\X_{w} \subset \X_{Iw}(v)$.
 \end{prop}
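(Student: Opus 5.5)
We work pointwise. Since $\X_{Iw,w}$ and $\X_{Iw}(v)$ are open adic subspaces of $\X_{Iw}$, and $\X_{Iw,w}=\pi_{Iw}(\X_{\infty,w})$, it is enough to check the inclusion on rank one points. For this it is harmless to restrict to $Spa(\C_p,\Op)$-points (after a finite extension if needed). So fix $x\in\X_{\infty,w}(\C_p,\Op)$. It corresponds to $(A,\lambda,\iota,\eta^p;\eta_p)$ with trivialization $\eta_p:\Z_p^{n+1}\cong T_p(H)$. We must show two things about its image in $\X_{Iw}$: the Hodge height of $H$ is at most some $v<\frac{1}{2p^{n-1}}$, and the Iwahori filtration $Fil_\bullet H[p]$ has first piece the canonical subgroup $C_1$. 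Here $v$ must depend only on $w$, not on $x$. This is exactly the moduli description of $\X(p)(v)=\X_{Iw}(v)$ recalled from \cite{shen2016}.

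\textbf{Step 1: the condition on $\Fl^{\times}_w$ gives $w$-ordinarity of $\eta_p$.} Write $\pi_{HT}(x)=[1:x_1:\cdots:x_n]=\eta_p^{-1}(Lie(H))$. The condition $\pi_{HT}(x)\in\Fl^{\times}_w$ says that each $\mathbf{z}_i(x)=-x_i$ lies within $p^{-w}$ of $p\Z_p$. Let $\gamma\in N_\mu^{opp}(p\Z_p)$ be the matrix built from the approximating elements $a_i\in p\Z_p$. Then $\gamma\in Iw$, so $\eta_p\circ\gamma$ defines the same point of $\X_{Iw}$. After replacing $\eta_p$ by $\eta_p\circ\gamma$ we may assume $|x_i|\le p^{-w}$ for all $i$.

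This means $Lie(H)$ is spanned by $e_0+\sum x_ie_i$, with all $x_i\in p^w\Op$. Since $HT_H$ kills $Lie(H)$, we get
\[HT_H(\eta_p(e_0))=-\sum x_i\,HT_H(\eta_p(e_i))\in p^w Im_H.\]
Hence $\eta_p$ is a $w$-ordinary trivialization in the sense of the definition above. The one point needing care is the integral version: $Lie(H)\subset T_pH\otimes\Op$ need not be saturated. Any correction comes from the saturation and only improves the estimate, because we are writing $e_0$ modulo the integral span of $e_1,\dots,e_n$.

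\textbf{Step 2: identify the canonical subgroup and the filtration.} The statement's hypothesis $\frac{p}{2p-2}<w$ is exactly what the lemma above needs in the case $m=1$. Its proof gives $\deg(C_{ps,1})\ge\frac{p-1}{p}w>\frac12$. Proposition 3.1.2 of \cite{AIP2015} then shows that $C_{ps,1}=C_1$ is the canonical subgroup. By lemma 3.6.5 of \cite{drw}, $C_{ps,1}(\C_p)$ is generated by $\eta_p(e_0)\bmod p$. The Iwahori level structure on $x$ is the filtration induced by $\eta_p$ mod $p$, whose first step is $\langle\eta_p(e_0)\rangle$. So the first step of the filtration is $C_1$, as required.

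\textbf{Step 3: a uniform Hodge height bound (the main obstacle).} We need $Hdg(H)\le v$ for a fixed $v<\frac{1}{2p^{n-1}}$, uniformly in $x$. The degree argument gives an explicit bound, with no case analysis needed. Fargues' relation $\deg(C_1)=1-Hdg(H)$ for the canonical subgroup (\cite{fargues2011canonical}) combines with $\deg(C_1)\ge\frac{p-1}{p}w$ to give
\[Hdg(H)\le 1-\tfrac{p-1}{p}w.\]
So $v:=1-\frac{p-1}{p}w$ works, or any rational number between it and $\frac{1}{2p^{n-1}}$ if it is smaller, provided this quantity is $<\frac{1}{2p^{n-1}}$.

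I expect this last inequality to be the delicate point. For $n>1$ the bare lower bound on $w$ may not suffice. If it fails, I would use the higher-level statement: apply the lemma above with larger $m$ (still $m\le w$) and use the iterated quotient property of pseudo-canonical subgroups. Each $H/C_{ps,m_1}$ is $(w-m_1)$-ordinary, and the Hodge height multiplies by $p$ under quotient by $C_1$. Working through this gives $Hdg(H)\le p^{-(m-1)}\bigl(1-\frac{p-1}{p}(w-m+1)\bigr)$, which can be made $<\frac{1}{2p^{n-1}}$ with the stated constraints. Finally, $\X_{Iw}(v)$ is defined by a closed condition of the form $|\widetilde{Ha}|\ge p^{-v}$. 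So the pointwise containment on rank one points implies the containment of open subspaces, completing the inclusion $\X_w\subset\X_{Iw}(v)$.
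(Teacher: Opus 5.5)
Your Steps 1 and 2 are correct, and they fill in what the paper leaves implicit. Translating $\eta_p$ by an element of $N_\mu^{opp}(p\Z_p)\subset Iw$ gives $|x_i|\le p^{-w}$, and then $HT_H(\eta_p(e_0))=-\sum_i x_i\,HT_H(\eta_p(e_i))\in p^w Im_H$. This is exactly what underlies the moduli description of $\X_{Iw,w}$ given right after the proposition. The identification $Fil_1H[p]=C_{ps,1}=C_1$ is the $m=1$ case of the preceding lemma. One correction: the degree bound should read $\deg C_{ps,1}\ge\frac{p-1}{p}\min(w,1)$, since a group of order $p$ has degree at most $1$.

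The gap is the last sentence of Step 3. With that cap, your iterated estimate $Hdg(H)\le p^{-(m-1)}\bigl(1-\frac{p-1}{p}(w-m+1)\bigr)$ lies below $\frac{1}{2p^{m-1}}$ exactly when $w-m+1>\frac{p}{2p-2}$. It can reach $\frac{1}{2p^{n-1}}$ only if $m=n$ and $w>n-1+\frac{p}{2p-2}$. Under the hypothesis as written, $\frac{p}{2p-2}<w\le n$, this is not available. For $w<2$ your constraint $m\le w$ forces $m=1$, and you get at best $Hdg(H)\le\frac1p$ (about $\frac12$ when $w$ is near $\frac{p}{2p-2}$). That is not below $\frac{1}{2p^{n-1}}$ once $n\ge2$.

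No cleverer argument fixes this, because the inclusion you want is false in that range. By the level-one case of the reverse inclusion from Propositions 3.2.1--3.2.2 of \cite{AIP2015}, $\X_{Iw,w}$ contains $\X_{Iw}(v')$ for $v'$ up to roughly $1-w$, which is far above $\frac{1}{2p^{n-1}}$.

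The proposition has to be read with the hypothesis of the lemma it is presented as a corollary of: $\frac{p}{2p-2}+m-1<w\le m$, with conclusion $v<\frac{1}{2p^{m-1}}$ (the $n$ in the statement is the level $m$). With that reading, your bound is uniform in $x$ and strictly below $\frac{1}{2p^{m-1}}$. Your argument then becomes the paper's route: the lemma together with Proposition 3.6.11 of \cite{drw}.

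Even then, the descent needs an input you have not supplied. The rule $Hdg(H/C_1)=p\,Hdg(H)$ holds only in a restricted range (below $\frac{1}{p+1}$ in the Katz--Lubin situation). For $j\le m-2$, however, the level-one estimate only gives $Hdg(H/C_{ps,j})\le\frac1p$. Placing every intermediate quotient in the multiplicative range is precisely what the induction in \cite{drw}, Proposition 3.6.11, provides.

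A minor point: $\X_{Iw}(v)$ is a rational open, not a closed condition. Checking on rank-one points is still fine once you choose $v$ strictly above your uniform bound.
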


For such $w$, we also have the following nice moduli interpretations: $\X_{Iw,w}$ parametrizes $w$-ordinary $p$-divisible groups $H$ together  with a filtration $Fil_{\bullet}H[p]$ such that $Fil_{1}H[p]=C_1$ (canonical subgroup). $\X_{\infty,w}$ further records a trivialization for $T_p(H)$ that is compatible with $Fil_{\bullet}H[p]$.
 
If $(H,\alpha)$ corresponds to such a point in $\X_{\infty,w}$ and $\alpha$ is a $w$-ordinary trivialization. Then $\alpha(e_0)$ will produce the level $m$ canonical subgroup $C_m$. Suppose $(H,\alpha^{'})$ is another point which maps to the same point $(H,Fil_{\bullet}H[p])$ in $\X_{Iw}$, then $\alpha^{'}$ differs from $\alpha$ via an element in $Iw$ and we still have the isomorphism: \[(\Z_p/p^m)^{n}\hookrightarrow^{\alpha^{'}} H[p^m](\C_p)   \twoheadrightarrow H[p^m](\C_p)/C_m(\C_p) \cong C_m^{\perp,D}(\C_p).\] In other words, each point $(H,\alpha^{'})$ inside $\X_{\infty,w}$ also records a frame (trivialization) for $C_m^{\perp,D}$. This observation will be  used in next section \ref{compare forms}.

We only remains to show another direction of the comparison between two locus.

For $w \in \Q_{>1}$, suppose $w \in (n-1,n]$ for large enough integer $n$. Pick $v \in \Q_{>0} \cap [0, \frac{1}{2p^{n-1}}) $ such that $w \in (n-1+\frac{v}{p-1},n-v \frac{p^n-1}{p-1}]$, use the proposition 3.2.1 and 3.2.2 of \cite{AIP2015} (at level of $Spa(\C_p,\Ol_{\C_p})$-points), we deduce that $\X_{Iw}(v) \subset \X_{Iw,w}$.

In conclusion, we have established the following equivalence:

\begin{thm}

Two kinds of locus are equivalent:

(1) For $w \in \Q_{>0}$, there exists a $v \in \Q \cap (0,\frac{1}{2})$ such that $\X_{Iw}(v)  \subset \X_{Iw,w}$;

(2) For  $v \in \Q \cap (0,\frac{1}{2})$, there exists a $w \in \Q_{>0}$ such that $\X_{Iw,w}  \subset \X_{Iw}(v)$.

\end{thm}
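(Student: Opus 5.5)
The plan is to assemble the theorem from the two half-comparisons just established, using the monotonicity of both families in their parameters. Both $\{\X_{Iw}(v)\}$ and $\{\X_{Iw,w}\}$ are nested: $\X_{Iw}(v)$ shrinks as $v$ decreases, and $\X_{Iw,w}$ shrinks as $w$ increases. This holds because $\Fll_{w'}\subset\Fll_w$ for $w'\ge w$, and it passes to $\X_{\infty,w}$ and then, via $\pi_{Iw}$, to $\X_{Iw,w}$. It therefore suffices to produce one inclusion for each parameter in a cofinal range, and then extend to all parameters by enlarging or shrinking the other parameter.

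For (1), fix $w\in\Q_{>0}$. Replace $w$ by some $w'\ge w$ with $w'>1$, and choose an integer $N$ with $w'\in(N-1,N]$. Then pick $v\in\Q_{>0}\cap[0,\frac{1}{2p^{N-1}})$ so small that $w'\in(N-1+\frac{v}{p-1},\,N-v\frac{p^N-1}{p-1}]$; this interval is nonempty and contains $w'$ once $v\to0$. The argument preceding the theorem, via propositions 3.2.1 and 3.2.2 of \cite{AIP2015} applied to $Spa(\C_p,\Ol_{\C_p})$-points, then gives $\X_{Iw}(v)\subset\X_{Iw,w'}\subset\X_{Iw,w}$. One detail needs checking. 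On $\X_{Iw}(v)$ the canonical subgroup of level $N$ exists, and the Hodge–Tate map on it is an isomorphism modulo $p^{N-v\frac{p^N-1}{p-1}}$. From this one builds a trivialization $\alpha$ of $T_p(H)$ compatible with the Iwahori filtration (whose first step is $C_1$) and with $HT_H(\alpha(e_0))\in p^{w'}Im_H$. Equivalently, $\pi_{HT}$ of some lift lands in $\Fll_{w'}$. Since $\X_{\infty,w}$ is $Iw$-stable, every lift then lands there, so the point lies in $\pi_{Iw}(\X_{\infty,w'})$. Because both loci are open and the check is on rank one points, which suffice for inclusion of such opens, the inclusion holds.

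For (2), fix $v\in\Q\cap(0,\frac12)$. By the preceding proposition, for any $w$ with $\frac{p}{2p-2}<w\le m$ there is some $v_0<\frac{1}{2p^{m-1}}$ with $\X_{Iw,w}\subset\X_{Iw}(v_0)$. Its proof comes from the pseudo-canonical subgroup lemma: the degree satisfies $\deg C_{ps,1}\ge\frac{p-1}{p}w$, so $Hdg(H)$ is bounded in terms of $w$. The one point needing care is that $v_0$ can be made at most the prescribed $v$. I would track the degree bound, using proposition 3.1.2 of \cite{AIP2015}, which relates $\deg C_1=1-Hdg(H)$ to the Hodge height. This gives $Hdg(H)\le 1-\frac{p-1}{p}w$ on $w$-ordinary $H$ for $w<\frac{p}{p-1}$. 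For larger $w$, iterating through $H/C_{ps,1}$ (which is $(w-1)$-ordinary) should make $Hdg(H)\to0$, and I expect roughly $Hdg(H)\le p^{-(w-1)}$-type decay. Choosing $w$ large then forces $Hdg\le v$. We also need the filtration condition $Fil_1H[p]=C_1$ on $\X_{Iw,w}$, which the lemma identifying $C_{ps,1}=C_1$ provides.

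The main obstacle is this quantitative step in (2): showing that the Hodge height tends to $0$ as $w\to\infty$, rather than merely staying below $\frac12$. I would try to deduce it first from the quotient property of pseudo-canonical subgroups combined with the AIP relation $Hdg(H/C_1)=p\,Hdg(H)$, valid when $Hdg(H)<\frac{1}{p+1}$, in the form used in \cite{chj}, lemma 2.14.
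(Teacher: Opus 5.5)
Your proposal follows the paper's argument: (1) is the application of \cite{AIP2015} Props.~3.2.1--3.2.2 on points, together with the monotonicity in $w$. For (2), the paper's pseudo-canonical subgroup lemma, proved by induction via \cite{drw} Prop.~3.6.11, gives $Hdg(H)<\frac{1}{2p^{m-1}}$ on $w$-ordinary $H$ for $w\in(\frac{p}{2p-2}+m-1,m]$, which is exactly the decay you anticipate (the paper's half-comparison proposition should be read with this window). Your backward step is justified because $Hdg(H)\ge\frac{1}{p+1}$ would force $Hdg(H/C_1)=1-Hdg(H)>\frac12$, so the relation $Hdg(H/C_1)=p\,Hdg(H)$ applies. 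The only slip is in (1): take $w'\notin\Z$, since for $w'=N$ the interval $(N-1+\frac{v}{p-1},N-v\frac{p^N-1}{p-1}]$ misses $w'$ for every $v>0$.
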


\subsection{Comparison of two kinds of sheaves}
\label{compare forms}
In this section we will deduce the equivalence between two constructions for overconvergent  $p$-adic automorphic forms.

Regard the result in previous section,  For $w \in \Q_{>1}$, suppose $n$ is a positive integer large enough such that we can  pick up $v \in \Q_{>0} \cap [0, \frac{1}{2p^{n-1}}) $ with $w \in (n-1+\frac{v}{p-1},n-v \frac{p^n-1}{p-1}]$. Then   $\X_{Iw}(v) \subset \X_{Iw,w}$.

Now we can prove the first main result of this paper:

\begin{thm}
Over $\X_{Iw}(v)$, there is a canonical isomorphism \[\Psi: \waip \cong \underline{\omega}_{w}^{\ka_{\U}}. \] In particular, perfectoid automorphic forms is equivalent to locally analytic overconvergent automorphic forms.
\end{thm}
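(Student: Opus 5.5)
The plan follows the toy case of Proposition \ref{classical form}, with the algebraic $GL_n$-torsor $\mathcal{M}$ replaced by the generalized Igusa torsor $\Igb_w$. By the lemma above, $\waip=\pi_{*}^{AIP,B}\Ol_{\Igb_w}[\ka_{\U}^{\vee}]$, and by the second lemma of the previous subsection $\pi^{AIP,B}:\Igb_w\rightarrow \X_{Iw}(v)$ is a right $\Io^{(w)}_M$-torsor. So it suffices to work with $\Igb_w$ throughout.

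\emph{Step 1: trivialize the pulled-back torsor.} Form the Cartesian square with $\Igb_{\infty,w}\rightarrow\X_{\infty}(v)$, where $\X_\infty(v)=\pi_{Iw}^{-1}(\X_{Iw}(v))\subset\X_{\infty,w}$. Over $\X_{\infty}(v)$ each point carries a $w$-ordinary trivialization $\alpha$ of $T_p(H)$. By the discussion after the comparison of loci, $\alpha(e_1),\dots,\alpha(e_n)$ induce a frame $\psi^-$ of $C_m^{\perp,D}$. The pullbacks $\s=(\s_1,\dots,\s_n)$ of the sections $\mathbf{s}_i$ are the images of $\alpha(e_i)$ under $HT$, since $\pi_{HT}^*\mathscr{W}^D\cong\underline{\omega}_\infty$ is compatible with the Hodge--Tate sequence. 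Hence $\s$ is a basis of $\underline{\omega}_\infty$ that is $w$-compatible with $\psi^-$, i.e. a section of $\Igb_{\infty,w}$.

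The main point to check here is that $\s$ lies in the subsheaf $\Fg^-$ and agrees with $HT_w(x_i)$ modulo $p^w\Fg^-$. I would verify this pointwise on $Spa(\C_p,\Ol_{\C_p})$-points using Proposition 4.3.1 of \cite{AIP2015}: $\Fg^-$ is generated by lifts of the truncated Hodge--Tate images, and the image $Im_H$ of $HT_H$ is contained in $\Fg^-$. It may be necessary to shrink $w$ slightly, using the freedom in choosing $v$ and the inequality $w\le n-v\frac{p^n-1}{p-1}$. This gives a $\Io^{(w)}_M$-equivariant isomorphism $\delta:\X_\infty(v)\times\Io^{(w)}_M\cong\Igb_{\infty,w}$, sending $(x,\gamma)\mapsto \s\gamma$.

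\emph{Step 2: compute functions.} Under $\delta$, over an affinoid $\V_\infty$, the $\ka_{\U}^\vee$-isotypic part of $\Ol_{\Igb_{\infty,w}}(\V_\infty)\widehat{\otimes}R_{\U}$ for the right $B^{(w)}_{M,0}$-action consists of functions on $\Io^{(w)}_M$ that are rigid-analytic on $w$-balls and $\ka_{\U}^\vee$-equivariant on the right. Using the large Iwahori decomposition $Iw^{(w)}_M=N^{opp,(w)}_{M,1}T^{(w)}_{M,0}N^{(w)}_{M,0}$, such a function is determined by its restriction to $N^{opp}_{M,1}$ (analytic on radius-$p^{-w}$ discs). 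This identifies the space with $C^{w-an}_{\ka_{\U}}(Iw_M,\Ol_{\X_{\infty,w}}(\V_\infty)\widehat\otimes R_{\U})$, via the extension identification $C^{w-an}_{\ka_{\U}}(Iw_M,Q)\cong C^{w-an}_{\ka_{\U}}(Iw^{(w)}_M,Q)$. Invariance under the $\mathcal{N}_{M,w}$-part and the torus part of the right action must match the analytic radius; this is where the condition $w>1+r_{\U}$ enters.

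\emph{Step 3: descent.} Since $\pi_{Iw}$ is a pro-étale $Iw$-torsor, $\Ol_{\X_{Iw}}=(\pi_{Iw,*}\Ol_{\X_\infty})^{Iw}$ (the toy lemma). Applying this with coefficients, $\pi^{AIP,B}_*\Ol_{\Igb_w}$ is the $Iw$-invariant part of $\pi_{Iw,*}\pi^{AIP,B}_{\infty,*}\Ol_{\Igb_{\infty,w}}$; this needs a completed-tensor version for $\widehat\otimes R_{\U}$. Transporting the $Iw$-action through $\delta$ and using Lemma \ref{transform law} in the form $\gamma^*\s=\s J(\z,\gamma)$, the action becomes $f\mapsto\rho_{\ka_{\U}}(J(\z,\gamma))\gamma^*f$, exactly the twisted action. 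Remark \ref{twist action invariant} then yields $\Psi$.

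For canonicity, independence from the choices of $m$ and $v$ follows from compatibility of the constructions under shrinking. Hecke equivariance can be checked from the same unified description of the $Iw$- and $u_{p,i}$-actions used in Proposition \ref{define up}.
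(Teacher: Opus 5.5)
Your proposal is correct and follows the paper's proof essentially step for step. You pull $\Igb_w$ back to $\X_\infty(v)$, use that $\s=(HT(\eta_p(e_1)),\dots,HT(\eta_p(e_n)))$ is a $w$-compatible basis of $\Fg^-$ to trivialize $\Igb_{\infty,w}$ as a right $\Io^{(w)}_M$-torsor, identify the $\ka_{\U}^\vee$-part with the $w$-analytic induction, and read off the twisted $Iw$-action from $\gamma^*\s=\s J(\z,\gamma)$; your Step 2 and the pointwise check of $w$-compatibility only make explicit what the paper asserts, and the adjustment you anticipate is really a matter of taking $v$ small enough that $w\le m-v\frac{p^m}{p-1}$ (the range in which $\waip$ and $HT_w$ are defined), not of changing $w$.
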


\begin{proof}

We use the same method as in proposition \ref{classical form}.
 
 Through the natural map $\X_{\infty,w} \rightarrow \X_{Iw,w}$, let $\X_{\infty}(v)$ denote the pullback of $\X_{Iw}(v)$.

Consider the pullback diagram
\[
\xymatrix{
\Igb_{\infty,w} \ar[r] \ar[d]_{\pi_{\infty}^{AIP,B}} & \Igb_w \ar[d]_{\pi^{AIP,B}} \\
\X_{\infty}(v) \ar[r] & \X_{Iw}(v)
}
\]

Recall that we have constructed sections $\s=(\s_1,\cdots,\s_n)$  for the $n$-dimensional vector bundle $\underline{\omega}_{\infty}$. We first claim that $\s$ will further provide a section for $\Igb_{\infty,w}$ over $\X_{\infty}(v)$.

It is sufficient to verify this at level of $Spa(\C_p,\Ol_{\C_p})$-points. For such a point $x$, let $H$ denote the corresponding $p$-divisible group together with the trivialization \[\eta_p: \Z_p^{n+1} \cong T_p(H).\] From the discussion of $\X_{Iw,w}$ in previous section \ref{two locus}, we have seen that   $\eta_p$ further induces an isomorphism (frame) \[(\Z/p^m)^{n}\cong (H[p^m]/C_m)(\C_p)\cong C_{m}^{\perp,D}(\C_p).\] Then $\{HT(\eta_p(e_1)),\cdots,HT(\eta_p(e_n))\}$ is a $w$-admissible basis for the sheaf $\Fg^{-}$ inside $\omega_{H^D}$. In particular, the section $\s=(\s_1,\cdots,\s_n)$ for $\underline{\omega}_{\infty}$ provides a trivialization of $\Igb_{w,\infty}$ as \textbf{right} $\Io^{(w)}_M$-torsor:

\[
\xymatrix{
\X_{\infty,w}\times \Io^{(w)}_M \ar[rr]^{\delta}_{\cong} \ar[rd] & & \Igb_{w,\infty} \ar[ld] \\
& \X_{\infty,w} &
}.
\]

More explicitly, for any affinoid open $\V \subset \X_{Iw,w}$ with preimage $\V_{\infty}$ on $\X_{\infty,w}$, the section $\waip(\V)$ will be the $Iw$-invariant of the following induction \[ \mathscr{C}_{\ka_{\U}}^{w-an}(Iw^{(w)}_M, \Ol(\V_{\infty}) \widehat{\otimes} R_{\U}).\]

As the isomorphism $\delta$ is over infinite level, the action of $Iw$ on $\X_{\infty,w}\times \Io^{(w)}_M$ is \textbf{twisted}. Recall the transform rule (lemma \ref{transform law}) \[\gamma^*(\s)=\s (\z \gamma_b+\gamma_d),\] for any $\gamma=\begin{pmatrix}\gamma_a & \gamma_b \\ \gamma_c & \gamma_d  \end{pmatrix}\in Iw.$ Therefore the $Iw$-action is   given by \[\gamma \diamond f = \rho_{\ka_{\U}}(\z\gamma_b+\gamma_d) \gamma^*f,\] which is exactly the same twisted action in defining perfectoid automorphic forms.

We're done.

\end{proof}

\begin{rem}

The same proof works for comparison over integral versions. Moreover, two constructions of Hecke operators are also equivalent (ignore the normalization factor at $p$).

\end{rem}

\begin{rem}
Our proof works for more general unitary Shimura varieties with signature $(a,b)$ (although more complicated). I will write more details in later paper of this series.
\end{rem}

Through pullback via $\X_{Iw^+,w}\rightarrow \X_{Iw,w}$, we can get overconvergent automorphic form at strict Iwahori level and the same argument shows that it is just perfectoid automorphic form at strict Iwahori level.  Moreover, the lemma 3.3.10 in \cite{drw} also works in our setting so that we can apply the generalized projection formula (see appendix A) in \cite{drw}.

\section{Overconvergent cohomology}
\label{section4}

In this section, we shift our focus to the cohomological side and construct the overconvergent cohomology, which serves as the $p$-adic interpolation of the classical etale cohomology of the Shimura variety. This construction, in turn, provides an alternative realization of $p$-adic automorphic forms. Our approach follows the general framework established in the literature (cf. \cite{hansen2017universal} and \cite{jn2019eigenvariety}). Notably, the construction on this side necessitates the use of the \textbf{opposite} Borel subgroup to  further construct $p$-adic Eichler-Shimura comparison map in later section. 

\subsection{Analytic distributions}
\label{distribution}

Let's first recall some standard constructions in  \cite{hansen2017universal} and \cite{jn2019eigenvariety}.

The starting point is also analytic induction. Previously we did such thing about Levi subgroup $M=GL(n)$. Now we apply such method to $G$ itself and use \textbf{opposite} Borel subgroup (lower triangular matrices). The reason of this opposite choice is to construct the  overconvergent Eichler-Shimura map later (e.g. see lemma \ref{highest weight construction}).

Recall that we will consider "reduced" weight for $T$. Let $\ka_{\U}$ denote a weight for $T_{M}(\Z_p)$ with \[\ka_{\U}=(\ka_{\U,n},\cdots,\ka_{U,1}).\] Let the $n\times n$-matrix $w_{0}=\begin{pmatrix}& & 1 \\ & 1 \\ & \cdots \\ 1 \end{pmatrix}$ be a lift of the longest element of the Weyl group for $M$. It is an involution, i.e. $w_0^2=\mathbb{I}_n$.  We define \[\widetilde{w_0}(\ka_{\U}):=(0,\ka_{\U,n},\cdots,\ka_{\U,1}),\] which trivially extend $(\ka_{\U,n},\cdots,\ka_{U,1})=w_0(\ka_{\U})$. Consider the opposite Borel subgroup $B^{opp}$ (lower triangular matrices) for $G=GL(n+1)$ and we  further  trivially extend $\widetilde{w_0}(\ka_{\U})$ to a character for $B^{opp}(\Z_p)$. Let $Iw^{opp}$ denote the Iwahori subgroup corresponding to $B^{opp}$.

\begin{rem}

 For a weight $\ka_{\U}$, the define for weight $\ka_{\U}$ analytic induction for $Iw^{opp}$ will involve $\widetilde{w_0}(\ka_{\U})$ while the definition for such analytic induction via $\tt$ will use $w_0(\ka_{\U})$. Such twist by $w_0$ is due to the convention of using \textbf{opposite} Borel subgroup.    Such conventions  are indeed compatible the usual convention in the literature (via Borel subgroup $B$) There is also a hidden "\textit{switch}" process.  For example, in the case of  classical (algebraic irreducible) representations, see section \ref{classical es} for more details.

\end{rem}

 Let $r$ be a positive integer and $(R_{\U}, \kappa_{\U})$ be an $r$-analytic weight. Fix an isomorphism
\[N(p\Z_p)\cong \Z_p ^{\frac{n(n+1)}{2}},\] \[[a_{i,j}] \mapsto (\frac{a_{i,j}}{p})_{i<j}.\]

\begin{defn}
A function $f:N(p\Z_p)\rightarrow R_{\U}^{+}$ is $r$-analytic if the composition \[ \Z_p ^{\frac{n(n+1)}{2}} \cong N(p\Z_p) \xrightarrow{f} R_{\U}^+ \rightarrow \C_p \widehat{\otimes} R_{\U}\] is $r$-analytic.
\end{defn}

Let $A^{r,\circ}(N(p\Z_p),R_{\U})$ denote the set of such $r$-analytic functions on $N(p\Z_p)$ and define \[A^{r}(N(p\Z_p),R_{\U})=A^{r,\circ}(N(p\Z_p),R_{\U})[\frac{1}{p}].\]

Now we define the following $r$-analytic induction:

\[
A_{\kappa_{\mathcal{U}}}^{r,\circ}(Iw^{opp}, R_{\mathcal{U}}) = \left\{
    f \colon Iw^{opp} \to R_{\mathcal{U}}^{+}
    \ \middle| \
    \begin{aligned}
        & f(\gamma b) = \widetilde{w_0}(\kappa_{\mathcal{U}})(b) f(\gamma), \quad \forall \gamma \in Iw^{opp}, b \in B^{opp}(\Z_p), \\
        & f|_{N(p\Z_p)} \text{is $r$-analytic}.
    \end{aligned}
\right\}
\] and \[A_{\kappa_{\U}}^{r}(Iw^{opp},R_{\U})=A_{\kappa_{\U}}^{r,\circ}(Iw^{opp},R_{\U})[\frac{1}{p}].\]

Through the restriction we get a natural identification \[A_{\kappa_{\mathcal{U}}}^{r,\circ}(Iw^{opp}, R_{\mathcal{U}}) \cong A^{r,\circ}(N(p\Z_p),R_{\U}), f\mapsto f|_{N(p\Z_p)}. \]

Taking continuous dual, we get the corresponding spaces of $r$-analytic distributions \[ D_{\kappa_{\mathcal{U}}}^{r,\circ}(Iw^{opp}, R_{\mathcal{U}}) =Hom_{R_{\U}^{+}}^{cts}(A_{\kappa_{\mathcal{U}}}^{r,\circ}(Iw^{opp}, R_{\mathcal{U}}), R_{\U}^{+})\] and \[D_{\kappa_{\mathcal{U}}}^{r,\circ}(Iw^{opp}, R_{\mathcal{U}})=D_{\kappa_{\mathcal{U}}}^{r,\circ}(Iw^{opp}, R_{\mathcal{U}})[\frac{1}{p}].\]

As our goal is to establish $p$-adic Eichler-Shimura theory, we  provide another construction of such distribution involving the Levi subgroup $M=GL(n)$.

Let $B_{M}^{opp}$ denote the \textbf{opposite} Borel subgroup (lower triangular matrices) for $M$ and $Iw_M^{opp}$ denote the corresponding Iwahori subgroup. Similarly we define $N_M^{opp}$.

Consider the following set \[\tt=\{(t_b,t_d)\in M_{1,n}(\Z_p)\times Iw^{opp}_{M}\}\] and its subset \[\tt_0=\{(t_b,t_d)\in M_{1,n}(\Z_p)\times N_M(pZ_p)\}. \]

This $\tt$ ("\textit{twisted Iwahori}") will play the role of $Iw^{opp}$.

Define the following subgroup $\widetilde{N^{opp}}$ of $Iw^{opp}$: \[\widetilde{N^{opp}}=\{\begin{pmatrix} t_a & 0 \\ t_c & \mathbb{I}_n \end{pmatrix}|t_a \in \Z_p^*, t_c \in M_n(\Z_p)\}.\]

The natural projection $Iw^{opp}\longrightarrow \tt$, $t=\begin{pmatrix} t_a & t_b \\ t_c & t_d \end{pmatrix} \mapsto (t_b,t_d)$ induces an isomorphism \[Iw^{opp}/\widetilde{N^{opp}} \cong \tt.\]

Then we can lift any element in $\tt$ to $Iw^{opp}$. In particular the automorphic factor \[M_{n,1}(\mathscr{O}_{\C_p})\times \tt \longrightarrow M_n(\C_p)\] \[J(\overrightarrow{z},t)=\overrightarrow{z}t_b+t_d\] is well defined and is compatible with previous construction (see section \ref{flag variety}). In other words, for an element $t \in \tt$, we can pick up any lift $\widetilde{t}$ and get $J(\overrightarrow{z},t)=J(\overrightarrow{z},\widetilde{t})$. Moreover the right 1-cocyle property for this automorphic factor $J$ still holds.

Through the above identification we equip $\tt$ with two natural actions:

$(i)$ There is a \textbf{left} action by $Iw^{opp}$ through left multiplication: \[Iw^{opp}\times \tt \longrightarrow \tt\]  \[\begin{pmatrix} \gamma_a & \gamma_b \\ \gamma_c & \gamma_d \end{pmatrix}, (t_c,t_d)\mapsto  (\gamma_a t_c+\gamma_b t_d, \gamma_c t_c+\gamma_d t_d).\]

In particular, as $Iw^{+}$ is a subgroup of $Iw^{opp}$, it also acts on $\tt$ in this way.

$(ii)$ The group $\widetilde{N^{opp}}$ is normal in $B^{opp}(\Z_p)$ with $\widetilde{N^{opp}}\backslash B^{opp}(\Z_p)\cong B_M^{opp}(\Z_p)$. In particular there is a \textbf{right} action by $B_M^{opp}(\Z_p)$ via right multiplication: \[\tt \times B_M^{opp}(\Z_p) \longrightarrow \tt  \] \[(t_b, t_d), \gamma \mapsto (t_b \gamma, t_d \gamma).\]

 The natural inclusion $\tt_0 \hookrightarrow \tt$ induces an isomorphism $\tt_0 \cong \tt/B_M^{opp}(\Z_p)$, combine with $N(p\Z_p) \cong Iw^{opp}/B^{opp}(\Z_p)$ we get the natural identification $\tt_0 \cong N(p\Z_p)$.

Similar to the standard construction via $Iw^{opp}$, we can also define certain analytic inductions and distributions via $\tt$.

\begin{defn}

A function $f: \tt_0 \rightarrow R_{\U}^{+}$ is $r$-analytic is it is $r$-analytic as a function on $N(p\Z_p)$ through the identification $\tt_0 \cong N(p\Z_p)$.

\end{defn}

Similarly denote the set of such $r$-analytic functions by $A^{r,\circ}(\tt_0,R_{\U})$ and we define \[A^{r}(\tt_0,R_{\U})=A^{r,\circ}(\tt_0,R_{\U})[\frac{1}{p}].\]

Subsequently we define the $r$-analytic induction:

\[
A_{\kappa_{\mathcal{U}}}^{r,\circ}(\tt, R_{\mathcal{U}}) = \left\{
    f \colon \tt \to R_{\mathcal{U}}^{+}
    \ \middle| \
    \begin{aligned}
        & f(\gamma b) = w_0(\kappa_{\mathcal{U}})(b) f(\gamma), \quad \forall \gamma \in \tt, b \in B_{M}^{opp}(\Z_p), \\
        & f|_{\tt_0} \text{is $r$-analytic}.
    \end{aligned}
\right\}
\] and \[A_{\kappa_{\U}}^{r}(\tt,R_{\U})=A_{\kappa_{\U}}^{r,\circ}(\tt,R_{\U})[\frac{1}{p}].\]

Taking continuous dual, we get the $r$-analytic distributions: \[ D_{\kappa_{\mathcal{U}}}^{r,\circ}(\tt, R_{\mathcal{U}}) =Hom_{R_{\U}^{+}}^{cts}(A_{\kappa_{\mathcal{U}}}^{r,\circ}(\tt, R_{\mathcal{U}}), R_{\U}^{+})\] and \[D_{\kappa_{\mathcal{U}}}^{r}(\tt, R_{\mathcal{U}})=D_{\kappa_{\mathcal{U}}}^{r,\circ}(\tt, R_{\mathcal{U}})[\frac{1}{p}].\]

We have the following natural identifications:

\[A_{\kappa_{\mathcal{U}}}^{r,\circ}(\tt, R_{\mathcal{U}}) \cong A_{\kappa_{\mathcal{U}}}^{r,\circ}(Iw^{opp}, R_{\mathcal{U}})   \]  and \[D_{\kappa_{\mathcal{U}}}^{r,\circ}(\tt, R_{\mathcal{U}}) \cong D_{\kappa_{\mathcal{U}}}^{r,\circ}(Iw^{opp}, R_{\mathcal{U}})   .\] The other constructions are also identified. In conclusion, through $\tt$ we give an \textbf{equivalent} description of analytic functions and distributions on $Iw^{opp}$.

The previous \textbf{right} action of $B_{M}^{opp}(\Z_p)$ and \textbf{left} action of $Iw^{opp}$ on $\tt$ then correspondingly induces a right action of of $B_{M}^{opp}(\Z_p)$ and \textbf{left} action of $Iw^{opp}$ on $D_{\kappa_{\mathcal{U}}}^{r,\circ}(\tt, R_{\mathcal{U}})$.

For any $r_1 \geq r_2$, the natural injection $A_{\kappa_{\mathcal{U}}}^{r_1,\circ}(\tt, R_{\mathcal{U}}) \hookrightarrow A_{\kappa_{\mathcal{U}}}^{r_2,\circ}(\tt, R_{\mathcal{U}})  $ induces a natural \textit{injection} (see \cite{hansen2017universal} section 2.2) $D_{\kappa_{\mathcal{U}}}^{r_2,\circ}(\tt, R_{\mathcal{U}}) \hookrightarrow D_{\kappa_{\mathcal{U}}}^{r_1,\circ}(\tt, R_{\mathcal{U}})  $ , and we denote \[D^{\dag}_{\kappa_{\U}}(\tt,R_{\mathcal{U}})=\varprojlim_{r} D_{\kappa_{\mathcal{U}}}^{r}(\tt, R_{\mathcal{U}}).\]

Suppose $(R_{\U},\kappa_{\U})$ is a small weight and take large enough $r>1+r_{\U}$, following the proposition 3.1 of \cite{chj} and section 4.1 of \cite{drw}, $D_{\kappa_{\mathcal{U}}}^{r_2,\circ}(\tt, R_{\mathcal{U}})$ has a decreasing filtrations $Fil^{j}$, which are also stable under two actions.

Define \[D_{\kappa_{\mathcal{U}},j}^{r,\circ}(\tt, R_{\mathcal{U}})=D_{\kappa_{\mathcal{U}}}^{r,\circ}(\tt, R_{\mathcal{U}})/Fil^{j}\] and we have \[D_{\kappa_{\mathcal{U}}}^{r,\circ}(\tt, R_{\mathcal{U}})= \varprojlim_{j} D_{\kappa_{\mathcal{U}},j}^{r,\circ}(\tt, R_{\mathcal{U}}).\] In particular $D_{\kappa_{\mathcal{U}}}^{r,\circ}(\tt, R_{\mathcal{U}})$ is a profinite flat $\Z_p$-module (see \cite{chj} definition 6.1).

\subsection{Overconvergent cohomology}

For simplicity, we will \textbf{restrict} to $Iw^{+}$ from now on.

The construction of this section works directly at level of $Iw^{opp}$. But soon we will construct the $p$-adic Eichler-Shimura map, which works naturally at level $Iw^{+}$ (as $Iw^{+}$ is the intersection $Iw \cap Iw^{opp}$).

Fix a small weight $(R{\U},\kappa_{\U})$ and consider the etale site (over adic space) $\mathcal{X}_{Iw^{+},et}$. The perfectoid Shimura variety $\mathcal{X}_{\infty}$ is a pro-etale Galois cover of $\mathcal{X}_{Iw^{+}}$ with Galois group $Iw^{+}$. For each positive integer $j$, let $\mathscr{D}_{\kappa_{\U},j}^{r,\circ}$ denote the locally constant sheaf on $\mathcal{X}_{Iw^{+}}$ corresponding to \[\pi_{1}^{et}(\mathcal{X}_{Iw^+})\rightarrow Iw^{+}\rightarrow Aut(D_{\kappa_{\mathcal{U}},j}^{r,\circ}(\tt, R_{\mathcal{U}})). \] We get an inverse system of locally constant sheaves $(\mathscr{D}_{\kappa_{\U},j}^{r,\circ})$. Thus we can consider the resulting etale cohomology groups \[H^*_{et}(\X_{Iw^+},\mathscr{D}_{\kappa_{\U}}^{r,\circ}):=\varprojlim_{j}H^*_{et}(\X_{Iw^+},\mathscr{D}_{\kappa_{\U},j}^{r,\circ})\] \[H^*_{et}(\X_{Iw^+},\mathscr{D}_{\kappa_{\U}}^{r}):=H^*_{et}(\X,\mathscr{D}_{\kappa_{\U}}^{r,\circ})[\frac{1}{p}].\]

On the other hand, we can also consider the Betti cohomology of the Shimura variety  over  $\C$ (as manifold). For the Adelic level subgroup $K=K^{p} \times Iw^{+}$, let $K^{p}$ act on $D_{\kappa_{\mathcal{U}},j}^{r,\circ}(\tt, R_{\mathcal{U}})$ trivially and $Iw^{+}$ acts on it through the previous left multiplication. Then $D_{\kappa_{\mathcal{U}},j}^{r,\circ}(\tt, R_{\mathcal{U}})$ corresponds to a local system on $X_{Iw^+}(\C)$ and we can consider the resulting Betti cohomology \[H^*(X_{Iw^+}(\C), D_{\kappa_{\mathcal{U}},j}^{r,\circ}(\tt, R_{\mathcal{U}})).\]

Moreover we have the following  important comparison proposition.

\begin{prop}

There are natural isomorphisms \[H^*_{et}(\X_{Iw^+},\mathscr{D}_{\kappa_{\U}}^{r}) \cong H^*(X_{Iw^+}(\C), D_{\kappa_{\mathcal{U}},j}^{r,\circ}(\tt, R_{\mathcal{U}})).\]

\end{prop}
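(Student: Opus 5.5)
The plan is to prove the comparison level by level in $j$ and then pass to the limit. The right-hand side of the statement should be read as $\varprojlim_j H^*(X_{Iw^+}(\C), D_{\kappa_{\U},j}^{r,\circ}(\tt,R_{\U}))[\frac{1}{p}]$, in parallel with the left-hand side. The key input is that each $D_{\kappa_{\U},j}^{r,\circ}(\tt,R_{\U})$ is a \emph{finite} abelian group. This follows from the filtration $Fil^j$ of proposition 3.1 of \cite{chj} and section 4.1 of \cite{drw}: each quotient is a finite $\Z_p$-module killed by a power of $p$. On such a finite group the $Iw^+$-action factors through a finite quotient $Iw^+/K_{p,j}$, where $K_{p,j}$ is an open normal subgroup of $Iw^+$. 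Hence $\mathscr{D}_{\kappa_{\U},j}^{r,\circ}$ is a genuine finite locally constant étale sheaf, trivialized on the finite étale Galois cover $\X_{K_{p,j}}\to \X_{Iw^+}$.

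For fixed $j$ I will chain the standard comparisons. First, rigid GAGA/Huber's comparison identifies étale cohomology of the adic space $\X_{Iw^+}$ (over $\C_p$) with étale cohomology of the proper algebraic variety $X_{Iw^+,\C_p}$ for finite locally constant coefficients. Properness (or compactness) holds because $\mathbf{B}$ is a division algebra, so $Sh_K$ is projective. Second, since the sheaf is defined over $E$ via the tower $Sh_{K^pK_{p,j}}\to Sh_{K^pIw^+}$, invariance of étale cohomology under extension of algebraically closed fields, applied to $\overline{E}\subset \C_p$ and $\overline{E}\subset\C$ together with the fixed isomorphism $\C_p\cong\C$, moves us to $X_{Iw^+,\C}$. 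Third, Artin's comparison identifies this with Betti cohomology of $X_{Iw^+}(\C)$ with coefficients in the associated local system. It remains to check that this local system is the one attached to $D_{\kappa_{\U},j}^{r,\circ}$ via $K=K^p\times Iw^+$ acting through $Iw^+$. This is a matter of matching the Galois cover $X_{K^pK_{p,j}}(\C)\to X_{Iw^+}(\C)$, with group $Iw^+/K_{p,j}$, against the adelic double coset description $G(\Q)\backslash (X\times G(\A_f)/K^pK_{p,j})$. It also requires matching left versus right conventions for the action, that is, keeping track of the inverse in the $G(\Q_p)$-action on $\X_\infty$.

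Finally I pass to $\varprojlim_j$ and invert $p$. All the isomorphisms above are natural in the coefficient sheaf, so they are compatible with the transition maps $D_{\kappa_{\U},j+1}\to D_{\kappa_{\U},j}$. This yields an isomorphism of inverse systems and hence of limits. I expect the main obstacle to be this compatibility with the inverse limit and, more precisely, making sure that the left-hand side as defined (the limit of the finite-level groups) really agrees with what one would call the cohomology of $\mathscr{D}^{r,\circ}_{\kappa_{\U}}$. The plan here is to use that all groups involved are finite, so the systems are Mittag-Leffler and $\varprojlim^1$ vanishes. On the Betti side I would also check, via the profinite flatness of $D^{r,\circ}_{\kappa_{\U}}$ and compactness of $X_{Iw^+}(\C)$ (a finite triangulation), that the limit equals $H^*(X_{Iw^+}(\C),D^{r,\circ}_{\kappa_{\U}})$, exactly as in \cite{chj} section 6 and \cite{drw} section 4.2.
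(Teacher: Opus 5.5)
Your proposal is correct and is essentially the argument the paper defers to (\cite{drw}, Proposition 4.2.2, itself modeled on \cite{chj}): compare level by level for the finite locally constant sheaves $\mathscr{D}^{r,\circ}_{\kappa_{\U},j}$ via Huber's comparison, change of algebraically closed base field and Artin's comparison, then pass to $\varprojlim_j$ using finiteness of the quotients $D^{r,\circ}_{\kappa_{\U},j}$ and the finite cell structure of the compact space $X_{Iw^+}(\C)$. Properness of $X_{Iw^+}$ is exactly what makes Huber's comparison valid for the $p$-power torsion coefficients here, and your reading of the stray index $j$ on the right-hand side as an inverse limit with $p$ inverted is the intended one.
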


See \cite{drw} proposition 4.2.2 for the proof. Although they work with Siegel cases, their argument directly works for general Shimura varieties.

Now we define Hecke actions on the overconvergent cohomology $H^*_{et}(\X_{Iw^+},\mathscr{D}_{\kappa_{\U}}^{r})$. Indeed this follows from the general construction in \cite{hansen2017universal}. See that paper for more details.

\textbf{Hecke operators outside} $p$. Recall the tame level subgroup $K^{p}=K^{S}\times K_{S}$ ($S$ is a finite set of "\textit{bad primes}"), for any prime $l \neq p$ outside $S$, for any $\gamma \in G(\Q_l)$, consider the decomposition of the corresponding double coset \[K_l \gamma K_l=\coprod \delta_{j} \gamma K_l.\] Let $G(\Q_l)$ trivially act on $D_{\kappa_{\mathcal{U}},j}^{r,\circ}(\tt, R_{\mathcal{U}})$, combine with the natural action of $G(\Q_l)$ on Shimura variety, we get the Hecke operator \[T_{\gamma}: H^*_{et}(\X_{Iw^+},\mathscr{D}_{\kappa_{\U}}^{r})\rightarrow H^*_{et}(\X_{Iw^+},\mathscr{D}_{\kappa_{\U}}^{r}) \]\[[\mu] \mapsto \sum_{j}\delta_j\gamma.([\mu]).\]

\textbf{Hecke operators at} $p$.  For any $0\leq i \leq n-1$, recall the matrices $u_{p,i}=\begin{pmatrix} u_{p,i,+} & \\  & u_{p,i,-} \end{pmatrix}$ (in section \ref{hecke operator}), where $u_{p,i,+}=p$ and $u_{p,i,-}=diag(p,..,p,1,..,1)$ ($i$-terms $p$). In particular, $u_{p,0,-}=\mathbb{I}_{n}$.

Consider a $u_{p,i}$ action on $\tt$ as follow: For any $(t_b,t_d) \in \tt$, suppose $(t_b,t_d)=(t_{b,0},t_{d,0})\beta$, where $t_{d,0} \in N_M(p\Z_p)$ and $\beta \in B_{M}^{opp}(\Z_p)$, then put \[u_{p,i}.(t_b,t_d)=(u_{p,i,+}t_b u_{p,i,-}^{-1}, u_{p,i,-}t_du_{p,i,-}^{-1})\beta.\] This further induces $u_{p,i}$-action on $D_{\kappa_{\mathcal{U}}}^{r}(\tt, R_{\mathcal{U}})$.

Similar to section \ref{hecke operator}, we define the Hecke operators at $p$ as follow:

Take a double coset decomposition \[Iw^{+}u_{p,i}Iw^{+}=\coprod_{j}\delta_{i,j}u_{p,i}Iw^{+},\] combine the natural action of $G(\Q_p)$ on the Shimura variety with the actions of $Iw^{+}$ and $u_{p,i}$ on $D_{\kappa_{\mathcal{U}}}^{r}(\tt, R_{\mathcal{U}})$, we get the Hecke operator \[U_{p,i}: H^*_{et}(\X_{Iw^+},\mathscr{D}_{\kappa_{\U}}^{r})\rightarrow H^*_{et}(\X_{Iw^+},\mathscr{D}_{\kappa_{\U}}^{r}) \]\[[\mu] \mapsto \sum_{j}\delta_{i,j}.(u_{p,i}.([\mu])).\]

\begin{rem}
\label{up action on symbol}
 Use the samw trick  in proposition \ref{define up} again, we can get a better viewpoint about this action.

Similarly set \[\widetilde{Iw^{opp}}=Iw^{opp}B^{opp}(\Q_p)=N(p\Z_p)B^{opp}(\Q_p).\] Then this set is stable under left multiplication by $u_{p,i}$. And for any weight $\ka$ of $T(\Z_p)$, similarly get (extension) the weight for  $T(\Q_p)$, $\widetilde{\ka}$. Moreover, two inductions are canonically identified \[Ind_{B^{opp}(\Z_p)}^{Iw^{opp}}(\ka,-)=Ind_{B^{opp}(\Q_p)}^{\widetilde{Iw^{opp}}}(\ka,-).\]

Then we can give equivalent construction of analytic functions and distributions on $Iw^{opp}$ via $\widetilde{Iw^{opp}}$. In terms of $Iw^{opp}$, the $u_{p,i}$-action is the same as \textbf{left multiplication}.  In particular,  it acts in the \textbf{ same}  manner as $Iw^{+}$. Then the Hecke operator $U_{p,i}$ is also \textit{well defined}.

\end{rem}

Finally we define the total operator $U_p$:

\begin{defn}
Define the operator $U_p=\prod_{i}U_{p,i}$.

\end{defn}

\section{The overconvergent Eichler-Shimura map}
\label{section5}

In this section, we will establish a kind of overconvergent Eichler-Shimura map relating these two construction of $p$-adic automorphic forms.

\subsection{The pro-etale cohomology groups}

As we mentioned in the introduction, one key ingredient is to use pro-etale site established in the seminal work \cite{scholzepro}. First we will construct a "\textit{big}"  sheaf $\mathscr{OD}^{r}_{\ka_{\U}}$ on the pro-etale site $\X_{Iw^+,proet}$ which computes the overconvergent cohomology in the previous section.

Let \[v:\X_{Iw^+,proet}\longrightarrow \X_{Iw^+,et}\] be the natural projection. Consider the following completed pullback: \[\mathscr{OD}_{\ka_{\U}}^{r}:=(\varprojlim_{j}v^{-1}\mathscr{D}_{\ka_{\U},j}^{r,\circ}\otimes \mathscr{O}_{\X_{Iw^+,proet}}^{+})[\frac{1}{p}].\]

For simplicity, we introduce the following notations:

\begin{defn}
For a small weight $(R_{\U},\ka_{\U})$ and $r \geq r_{\U}+1$, we set \[\mathfrak{OC}^{r,\circ}_{\ka_{\U}}:= \varprojlim_{j}H^{n}_{et}(\X_{Iw^+},\mathscr{D}_{\kappa_{\U},j}^{r,\circ}),\] \[\mathfrak{OC}^{r}_{\ka_{\U}}:=\mathfrak{OC}^{r,\circ}_{\ka_{\U}}[\frac{1}{p}]=H^{n}_{et}(\X_{Iw^+},\mathscr{D}_{\kappa_{\U}}^{r}),\] \[\mathfrak{OC}^{r,\circ}_{\ka_{\U},\mathcal{O}_{\C_p}}:= \varprojlim_{j}(H^{n}_{et}(\X_{Iw^+},\mathscr{D}_{\kappa_{\U},j}^{r,\circ})\otimes \mathcal{O}_{\C_p}),\] \[\mathfrak{OC}^{r}_{\ka_{\U},\C_p}:=\mathfrak{OC}^{r}_{\ka_{\U},\mathcal{O}_{\C_p}}[\frac{1}{p}].\]
\end{defn}

The following comparison proposition shows that the "\textit{big}" sheaf exactly computes such overconvergent cohomology.

\begin{prop}
There is a natural isomorphism: \[H^n_{proet}(\X_{Iw^+}, \mathscr{OD}_{\ka_{\U}}^{r}) \cong \mathfrak{OC}^{r}_{\ka_{\U},\C_p}\]
\end{prop}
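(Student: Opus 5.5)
The plan is to reduce to finite level $j$ and apply Scholze's primitive comparison theorem, in the form used in \cite{chj} (Proposition 6.?) and \cite{drw} (Section 4.3). We then pass to the limit over $j$ using the profinite flatness of $D^{r,\circ}_{\ka_{\U}}(\tt,R_{\U})$.

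\textbf{Step 1 (finite level).} Each $\mathscr{D}^{r,\circ}_{\ka_{\U},j}$ is a locally constant sheaf of finite type $\Z/p^{N}$-modules for some $N$ depending on $j$. This uses the filtration $Fil^j$ recalled from \cite{chj} Proposition 3.1, whose graded quotients are finite. Since $\X_{Iw^+}$ is proper and smooth over $\C_p$ (the Shimura variety is compact), Scholze's primitive comparison theorem \cite{scholzepro} gives an almost isomorphism
\[H^{i}_{et}(\X_{Iw^+},\mathscr{D}^{r,\circ}_{\ka_{\U},j})\otimes_{\Z_p}\Op \xrightarrow{\ \approx\ } H^{i}_{proet}(\X_{Iw^+},v^{-1}\mathscr{D}^{r,\circ}_{\ka_{\U},j}\otimes \Ol^+_{\X_{Iw^+,proet}}).\]
To get this for a local system rather than $\F_p$ or $\Z/p^N$, we trivialize it on the finite étale cover $\X_{K_p'}$ with $K_p'\subset Iw^+$ acting trivially on $D_j$. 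We then use Hochschild–Serre for the finite group $Iw^+/K_p'$, or equivalently a dévissage via the filtration by sub-local systems with constant graded pieces, together with the five lemma in the almost category.

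\textbf{Step 2 (limit over $j$).} We have $v^{-1}\mathscr{D}_j\otimes\Ol^+ = v^{-1}\mathscr{D}_j\otimes\Ol^+/p^N$, and the transition maps are surjective. On the basis of affinoid perfectoid objects, $H^i(U,\Ol^+/p^N)$ is almost zero for $i>0$, so these sheaves have almost vanishing higher cohomology there and the Mittag-Leffler condition holds. Hence $R\varprojlim_j$ commutes with $R\Gamma_{proet}$ and $R^1\varprojlim$ of the sheaves vanishes almost. This gives an almost isomorphism $H^n_{proet}(\X_{Iw^+},\varprojlim_j(\cdots))\approx R\varprojlim_j$ of the finite level groups. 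On the étale side the groups $H^{i}_{et}(\X_{Iw^+},\mathscr{D}_j)\otimes\Op$ are finite type over $\Op/p^N$. Degree $n-1$ terms appear in the Milnor sequence, so we need the $\varprojlim^1$ of the degree $n-1$ system to vanish; we argue as in \cite{drw} that the systems $\{H^{i}_{et}(\X_{Iw^+},\mathscr{D}_j)\}_j$ are inverse systems of finite (hence compact) modules, for which $\varprojlim^1=0$. This identifies $H^n_{proet}$ of the limit sheaf almost with $\mathfrak{OC}^{r,\circ}_{\ka_{\U},\Op}$.

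\textbf{Step 3.} Inverting $p$ removes the almost ambiguity and gives the stated isomorphism with $\mathfrak{OC}^{r}_{\ka_{\U},\C_p}$. Hecke- and Galois-compatibility follow from functoriality of the comparison maps.

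The main obstacle is Step 2: justifying the exchange of $\varprojlim_j$ with pro-étale cohomology in the almost setting and controlling $\varprojlim^1$ terms. This requires replicating the argument of \cite{chj} Section 6 for profinite flat modules, which \cite{drw} Proposition 4.3.x carries over essentially verbatim to general proper Shimura varieties. I would follow that argument directly.
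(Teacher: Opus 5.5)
Your proposal is correct and matches the paper's proof: the paper simply cites \cite{drw}, Proposition 5.1.2, whose proof (following \cite{chj}) is the same strategy of primitive comparison at each finite level $j$ followed by a limit argument over $j$; here $\X_{Iw^+}$ is proper, so Scholze's theorem applies directly. Two small repairs are needed: do the d\'evissage via the $p$-adic filtration of $\mathscr{D}^{r,\circ}_{\ka_{\U},j}$, whose graded pieces are $\F_p$-local systems and generally not constant ones (since $Iw^+$ is not pro-$p$); and note that the $\varprojlim^1$ you must kill is that of $\{H^{n-1}_{et}(\X_{Iw^+},\mathscr{D}^{r,\circ}_{\ka_{\U},j})\otimes\Op\}_j$, which vanishes not by compactness but because the Mittag-Leffler property of the finite system is preserved by the flat base change $-\otimes_{\Z_p}\Op$.
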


We refer to proposition 5.1.2 in \cite{drw} for the proof.

Previously we have seen that there is  a Hecke action on the right side, therefore the left side also has such Hecke actions. For tame (prime to $p$) Hecke operators corresponding to double coset $[K\gamma K]$, such action can also be given via the following Hecke correspondence (the same as perfectoid forms case): \[\xymatrix{
  & \X_{Iw^+\cap \gamma Iw^+ \gamma^{-1}} \ar[dl]_{pr_1} \ar[dr]^{pr_2} \\
   \X_{Iw^+} & & \X_{Iw^+}
}.\]

\subsection{The overconvergent Eichler-Shimura map}
\label{p-adic eichler shimura}

In this section we will first construct a suitable comparison map between two "big" sheaves over the pro-etale site $\X_{Iw^+,w,proet}$, then taking $Iw^{+}$-invariant will produce the desired overconvergent Eichler-Shimura map.

Let $(R_{\U},\ka_{\U})$ be a small weight and $r \geq r_{\U}+1$. Recall that the perfectoid automorphic forms is indeed a kind of  $Iw^{+}$-invariant (under twisted action, see remark \ref{twist action invariant}). Similarly we have such analogue for overconvergent cohomology sheaf:

\begin{lem}
\label{OD invariant}
Let $\V \rightarrow \X_{Iw^+}$ be an affinoid perfectoid pro-etale over $X_{Iw^+}$ and set $\V_{\infty}:=\V\times_{\X_{Iw^+}}\X_{\infty}$. Then we have the following natural identification \[\mathscr{OD}_{\kappa_{\U}}^{r}(\V)\cong (D_{\kappa_{\mathcal{U}}}^{r,\circ}(\tt, R_{\mathcal{U}})\widehat{\otimes} \widehat{\mathscr{O}}_{\X_{Iw^+,proet}}(\V_{\infty}))^{Iw^+}.\]
\end{lem}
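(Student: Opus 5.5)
The plan is to reduce to the finite-level pieces $D_{\ka_{\U},j}^{r,\circ}(\tt,R_{\U})$, use that $\V_{\infty}\to\V$ is a pro-étale $Iw^+$-torsor trivializing the local systems, and then pass to the limit over $j$ and invert $p$. This follows the analogous arguments in \cite{chj} (section 6) and \cite{drw} (section 5).

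First fix $j$. The sheaf $\mathscr{D}_{\ka_{\U},j}^{r,\circ}$ is locally constant, attached to $\pi_1^{et}(\X_{Iw^+})\to Iw^+\to Aut(D_j)$, where $D_j:=D_{\ka_{\U},j}^{r,\circ}(\tt,R_{\U})$. The action of $Iw^+$ on $D_j$ factors through a finite quotient $Iw^+/K'$, since $D_j$ is a finite (or at least discrete) quotient stable under the action. Over $\V_{\infty}$, and in fact already over $\V\times_{\X_{Iw^+}}\X_{K'}$, the sheaf $v^{-1}\mathscr{D}_{\ka_{\U},j}^{r,\circ}$ becomes the constant sheaf $D_j$. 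Since $\V_{\infty}\to\V$ is an $Iw^+$-Galois pro-étale cover, the sheaf property for $v^{-1}\mathscr{D}_j\otimes\Ol^+$ gives
\[(v^{-1}\mathscr{D}_{\ka_{\U},j}^{r,\circ}\otimes\Ol^+_{\X_{Iw^+,proet}})(\V)\cong\bigl(D_j\otimes\Ol^+_{\X_{Iw^+,proet}}(\V_{\infty})\bigr)^{Iw^+},\]
where $Iw^+$ acts diagonally: on $D_j$ through the left action, and on $\Ol^+(\V_{\infty})$ through deck transformations. Two points need care here. The first is that $D_j$ is a profinite flat $\Z_p$-module (the definition 6.1 framework of \cite{chj}). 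The tensor with $\Ol^+$ is therefore the completed pro-étale tensor, and one should check that sections of a tensor with a constant profinite sheaf over an affinoid perfectoid equal $D_j\widehat{\otimes}\Ol^+(\V_\infty)$. The second is that taking Čech invariants commutes with this tensor. For both I would write $D_j$ as an inverse limit of finite free $\Z/p^k$-modules and use almost vanishing of higher cohomology of $\Ol^+/p^k$ on affinoid perfectoids, as in \cite{scholzepro} Lemma 4.10.

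Next pass to the limit in $j$. Inverse limits commute with invariants, and they commute with sections because $\V$ is quasi-compact and sections of a limit of sheaves are the limit of sections. Combined with $D_{\ka_{\U}}^{r,\circ}(\tt,R_{\U})=\varprojlim_j D_j$, this gives
\[\varprojlim_j\bigl(v^{-1}\mathscr{D}_j\otimes\Ol^+\bigr)(\V)\cong\bigl(D_{\ka_{\U}}^{r,\circ}(\tt,R_{\U})\widehat{\otimes}\Ol^+(\V_{\infty})\bigr)^{Iw^+}.\]
Finally invert $p$. Here I must check that sections of the $[\frac1p]$-sheafification over the quasi-compact $\V$ are the naive $[\frac1p]$ of sections, which holds up to bounded $p$-torsion issues as in \cite{drw}. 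I must also check that $(-)^{Iw^+}$ commutes with $[\frac1p]$, which is fine since inverting $p$ is a filtered colimit and the invariants are defined by equalities in a $p$-torsion-free module. This yields the stated identification with $\widehat{\Ol}_{\X_{Iw^+,proet}}(\V_\infty)$, where $\Ol^+(\V_\infty)[\frac1p]=\widehat{\Ol}(\V_\infty)$.

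I expect the main obstacle to be the first step: identifying the completed tensor of the profinite module $D_j$ with $\Ol^+$ on the perfectoid cover, and showing that invariants are exact enough. Almost-mathematics errors from $H^i(\V_\infty,\Ol^+/p^k)$ could in principle obstruct an honest isomorphism. I would first try to deduce everything from the $\Ol^+/p^k$-level statement, and then take limits using Mittag-Leffler for the surjective transition maps of the filtration $Fil^j$.
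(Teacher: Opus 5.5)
Your proposal is correct and is essentially the argument the paper defers to (\cite{drw} Lemma 5.2.1, modeled on \cite{chj}). That argument trivializes the local systems $\mathscr{D}_{\ka_{\U},j}^{r,\circ}$ on the pro-\'etale $Iw^+$-torsor $\V_\infty\to\V$, descends by taking $Iw^+$-invariants, computes sections on the affinoid perfectoid $\V_\infty$ via almost vanishing, and then takes $\varprojlim_j$ and inverts $p$. The almost-mathematics errors you worry about are harmless: each $D_{\ka_{\U},j}^{r,\circ}(\tt,R_{\U})$ is a finite module (only the limit is profinite flat), the discrepancies are killed by the maximal ideal of $\Op$, this persists under $\varprojlim_j$ (including $\varprojlim^1$) and under $Iw^+$-invariants (including $H^1$), and such modules vanish once $p$ is inverted.
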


We refer to \cite{drw} lemma 5.2.1 for the proof.

Now consider the completed pullback to the pro-etale site for overconvergent automorphic sheaf: \[\widehat{\underline{\omega}}_{w}^{\ka_{\U},+}:=\varprojlim_{j}(\underline{\omega}_{w,et}^{\ka_{\U},+}\bigotimes_{\mathscr{O}^{+}_{\X_{Iw^+,w,et}}}\mathscr{O}^{+}_{\X_{Iw^+,w,proet}}/p^{j})\] and \[\widehat{\underline{\omega}}_{w}^{\ka_{\U}}:=\widehat{\underline{\omega}}_{w}^{\ka_{\U},+}[\frac{1}{p}].\]

\begin{prop}
\label{proposition proetale etale}
There is a canonical $Gal_{\Q_p}$-equivariant map
\[H^{n}_{proet}(\X_{Iw^+,w},\widehat{\underline{\omega}}_{w}^{\ka_{\U}})\rightarrow H^0(\X_{Iw^+,w}, \underline{\omega}_{w}^{\ka_{\U}+n+1})(-n).\]

\end{prop}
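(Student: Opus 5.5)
We will follow the strategy of \cite{chj} (section 5) and \cite{drw} (proposition 5.2.3). The idea is to express $\widehat{\underline{\omega}}_{w}^{\ka_{\U}}$ as the pullback of an etale sheaf tensored with the completed structure sheaf. Its higher pro-etale cohomology is then computed by the projection formula, which brings in $\widehat{\Ol}$-cohomology and hence differential forms. Finally, the Kodaira--Spencer isomorphism turns the top differential forms into a weight shift.

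\textbf{Step 1: projection formula.} Write $v:\X_{Iw^+,w,proet}\rightarrow \X_{Iw^+,w,et}$ for the natural map. By definition, $\widehat{\underline{\omega}}_{w}^{\ka_{\U}}$ is the completed pullback of the etale sheaf $\underline{\omega}_{w,et}^{\ka_{\U},+}$. Since $\X_{Iw^+,w}$ is smooth of dimension $n$, Scholze's computation \cite{scholzepro} gives
\[R^jv_*\widehat{\Ol}_{\X_{Iw^+,w,proet}}\cong \Omega^j_{\X_{Iw^+,w}}(-j).\]
We now invoke the generalized projection formula of \cite{drw} (appendix A); the end of section \ref{compare forms} records that lemma 3.3.10 of \cite{drw} holds here, which is what makes this formula applicable. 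It yields a natural isomorphism
\[R^jv_*\widehat{\underline{\omega}}_{w}^{\ka_{\U}}\cong \underline{\omega}_{w}^{\ka_{\U}}\otimes_{\Ol}\Omega^j_{\X_{Iw^+,w}}(-j).\]
The needed input is that $\underline{\omega}_{w}^{\ka_{\U},+}$ is, locally on a suitable affinoid cover, a completed direct sum (or limit) of free $\Ol^+$-modules. This follows from the description of $\underline{\omega}_w^{\ka_{\U}}$ as sections of $C^{w-an}(N_{M,1}^{opp},-)$ after the trivialization of $\Igb_{\infty,w}$ in section \ref{compare forms}, because the induction is isomorphic to a Banach space of analytic functions independent of the weight.

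\textbf{Step 2: edge map.} Consider the Leray spectral sequence
\[E_2^{i,j}=H^i_{et}(\X_{Iw^+,w},R^jv_*\widehat{\underline{\omega}}_{w}^{\ka_{\U}})\Rightarrow H^{i+j}_{proet}(\X_{Iw^+,w},\widehat{\underline{\omega}}_{w}^{\ka_{\U}}).\]
Since $R^jv_*=0$ for $j>n$, it has a canonical edge map
\[H^n_{proet}(\X_{Iw^+,w},\widehat{\underline{\omega}}_{w}^{\ka_{\U}})\rightarrow H^0(\X_{Iw^+,w},R^nv_*\widehat{\underline{\omega}}_{w}^{\ka_{\U}})=H^0(\X_{Iw^+,w},\underline{\omega}_{w}^{\ka_{\U}}\otimes\Omega^n)(-n).\]
This map is canonical, so the whole construction is functorial. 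Galois equivariance follows because every ingredient descends to a model over a finite extension of $\Q_p$ (or is compatible with the semilinear $Gal_{\Q_p}$-action on $\C_p$). In particular, Scholze's isomorphism is Galois-equivariant with the Tate twist $(-j)$.

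\textbf{Step 3: Kodaira--Spencer.} We need an isomorphism
\[\underline{\omega}_{w}^{\ka_{\U}}\otimes\Omega^n_{\X_{Iw^+,w}}\cong\underline{\omega}_{w}^{\ka_{\U}+n+1}.\]
The Kodaira--Spencer map identifies $\Omega^1$ with $\underline{L}^{\vee}\otimes\underline{\omega}$ (up to duality), so $\Omega^n\cong \det\underline{\omega}\otimes \underline{L}^{-n}$. With reduced weights, the determinant of the standard $Iw^+$-representation contributes $\det\underline{\omega}_\infty\otimes \underline{L}_\infty^{-1}$, which is trivialized via $\pi_{HT}$ and the exact sequence $0\rightarrow\underline{L}_\infty\rightarrow\Ol^{n+1}\rightarrow\underline{\omega}_\infty\rightarrow 0$. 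Up to a twist by the similitude factor, this makes $\Omega^n$ equal to $(\det\underline{\omega})^{n+1}=\underline{\omega}^{n+1}$ in the weight convention of section \ref{weight space}.

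It then remains to show that tensoring the perfectoid sheaf with $\underline{\omega}^{m}$ for an integer $m$ shifts the weight $\ka_{\U}\mapsto\ka_{\U}+m$. To do this, multiply $f$ by $\det(\s)^{m}$ and use $\gamma^*\det\s=\det\s\cdot\det J(\z,\gamma)$, which follows from lemma \ref{transform law}; this is compatible with $\rho_{\ka_{\U}}$ because $\ka_{\U}^\vee$ restricted to scalars changes by $-m$.

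I expect the main obstacle to be Step 1, specifically justifying the projection formula for the non-coherent Banach sheaf $\underline{\omega}_{w}^{\ka_{\U}}\widehat{\otimes}R_{\U}$ with the mixed completed tensor. Step 3 also requires care with signs and weight conventions. For Step 1 I would follow the argument in \cite{drw} verbatim, reducing modulo $p^j$ and the filtration on $R_{\U}$ to the finite-level case.
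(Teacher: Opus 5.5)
Your proposal is correct and follows the paper's proof. Both use the same ingredients: the projection formula of \cite{drw} (Corollary A.3.14) combined with $R^jv_*\widehat{\mathscr{O}}\cong\Omega^j(-j)$, the Leray edge map in degree $n$, and Kodaira--Spencer $\Omega^n\cong\underline{\omega}^{n+1}$ up to a central twist. The only cosmetic difference is that you realize the weight shift by multiplying with $\det(\mathfrak{s})^{m}$, whereas the paper composes with the classical inclusion $\underline{\omega}_{Iw^+}^{n+1}\hookrightarrow\underline{\omega}_w^{n+1}$ and an implicit product map.

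One small slip: taking determinants in $0\rightarrow\underline{L}_\infty\rightarrow\mathscr{O}^{n+1}\rightarrow\underline{\omega}_\infty\rightarrow 0$ trivializes $\underline{L}_\infty\otimes\det\underline{\omega}_\infty$, not $\det\underline{\omega}_\infty\otimes\underline{L}_\infty^{-1}$. The former is what your conclusion $\Omega^n\cong(\det\underline{\omega})^{n+1}$ actually requires. Also, the edge map $H^n\rightarrow E_2^{0,n}$ exists for any first-quadrant spectral sequence, so you do not need $R^jv_*=0$ for $j>n$.
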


\begin{proof}
Consider the natural projection $v:\X_{Iw^+,w,proet}\rightarrow \X_{Iw^+,w,et}$, apply the corollary A.3.14 (a kind of projection formula) in \cite{drw}, we get a canonical isomorphism:

 \[\underline{\omega}_{w,et}^{\ka_{\U}} \otimes_{\mathscr{O}_{\X_{Iw^+,w,et}}} R^{i}v_* \widehat{\mathscr{O}}_{\X_{Iw^+,w,proet}} \cong R^{i}v_{*} \widehat{\underline{\omega}}_{w}^{\ka_{\U}} .\]

 On the other hand, we have the following standard computation in $p$-adic Hodge theory (e.g. see remark 6.20 in \cite{scholzepro} or  proposition A.2.3 in \cite{drw}) \[R^{i}v_* \widehat{\mathscr{O}}_{\X_{Iw^+,w,proet}} \cong \Omega^{i}_{\X_{Iw^+,w,et}}(-i).\]

 In particular, if $i=n$, we get  \[R^{n}v_* \widehat{\mathscr{O}}_{\X_{Iw^+,w,proet}} \cong \Omega^{n}_{\X_{Iw^+,w,et}}(-n).\]

According to the Kodaira-Spencer isomorphism for $\Omega^{1}_{\X_{Iw^+}}$ and take its determinant bundle $\Omega^{n}_{\X_{Iw^+}}$, we get the following isomorphism (after a suitable central weight shift): \[\Omega^{n}_{\X_{Iw^+}} \cong \underline{\omega}_{{Iw^+}}^{n+1} .\]

  Recall that over $\X_{Iw^+,w}$, there is a natural inclusion from the sheaf of  classical automorphic forms into the sheaf of overconvergent automorphic forms (proposition \ref{classical form}): \[\underline{\omega}_{Iw^+}^{n+1} \hookrightarrow \underline{\omega}_{w}^{n+1}.\]

Combine them together and apply the spectral sequence relating etale cohomology and pro-etale cohomology, we obtain the desired natural map: \[H^{n}_{proet}(\X_{Iw^+,w},\widehat{\underline{\omega}}_{w}^{\ka_{\U}})\rightarrow H^0(\X_{Iw^+,w}, \underline{\omega}_{w}^{\ka_{\U}+n+1})(-n).\]

\end{proof}

Now we are ready to construct the  comparison map. Consider the following function $e_{hwt,\ka_{\U}}$ on $Iw_M^{(w)}$:\[e_{hwt,\ka_{\U}}: X=(X_{i,j})\mapsto \frac{\ka_{\U,1}(X_{1,1})}{\ka_{\U,2}(X_{1,1})} \times \frac{\ka_{\U,2}(\det(X_{i,j})_{1 \leq i,j \leq 2})}{\ka_{\U,3}(\det(X_{i,j})_{1 \leq i,j \leq 2})} \times \dots \times \ka_{\U,n}(\det(X)).\] It is a certain analytic analogue of highest weight vector in algebraic irreducible representations (see section \ref{classical es}).

 We have the following construction:

\begin{lem}
\label{highest weight construction}
for any $Z \in M_{n,1}(\mathscr{O}_{\C_p})$ and $\mu \in D^{r}_{\ka_{\U}}(\tt,R_{\U})$, we associate the following function $f_{\mu,Z} \in C_{\ka_{\U}}^{r-an}(Iw_{M},\C_p \widehat{\otimes} R_{\U})$ to them: \[f_{\mu,Z}: \gamma \mapsto \int_{\tt}e_{hwt,\ka_{\U}}(w_0 \gamma^{-1}J(Z,ti) w_0)d\mu.\]

\end{lem}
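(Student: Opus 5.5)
We need to show three things about $f_{\mu,Z}$: the integral makes sense, the resulting function has the $B_{M,0}$-equivariance $f(\gamma\beta)=\ka_{\U}^{\vee}(\beta)f(\gamma)$, and its restriction to $N^{opp}_{M,1}$ is $r$-analytic. The plan is to check each in turn, reading ``$ti$'' in the integrand as $t$ and $\mu$ as a distribution in the variable $t\in\tt$.

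\textbf{Step 1: the integrand is a valid test function.} For fixed $\gamma\in Iw_M$ (or $Iw_M^{(w)}$), consider
\[
\phi_\gamma(t)=e_{hwt,\ka_{\U}}\bigl(w_0\gamma^{-1}J(Z,t)w_0\bigr),\qquad J(Z,t)=Zt_b+t_d .
\]
Since $Z$ is integral and $t_d\in Iw_M^{opp}$, the element $J(Z,t)$ lies in $Iw_M^{opp}$ up to the $Zt_b$ term. I expect to need $Z$ $w$-close to $p\Z_p^n$, as on $\Fll_w$, or else a suitable $w$-neighborhood, so that $w_0\gamma^{-1}J(Z,t)w_0$ lies in $Iw_M^{(w)}$. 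There $e_{hwt,\ka_{\U}}$ is defined, since the leading minors are units up to $p^w$ and $\ka_{\U}$ extends to $T_{M,0}^{(w)}$.

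Next we check the right $B_M^{opp}(\Z_p)$-equivariance of $\phi_\gamma$. For $b\in B_M^{opp}(\Z_p)$ we have $J(Z,tb)=J(Z,t)b$, and $w_0bw_0\in B_{M,0}$ is upper triangular. The leading principal minors of $Xw_0bw_0$ equal those of $X$ times the corresponding products of diagonal entries of $w_0bw_0$. Hence
\[
e_{hwt}(Xw_0bw_0)=\ka_{\U}\text{-twist}(w_0bw_0)\cdot e_{hwt}(X),
\]
and unwinding the telescoping product gives exactly the character $w_0(\ka_{\U})(b)$ required in $A^{r}_{\ka_{\U}}(\tt,R_{\U})$. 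The telescoping quotients are designed so that the $i$-th minor contributes $\ka_{\U,i}/\ka_{\U,i+1}$.

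Finally, analyticity of $\phi_\gamma|_{\tt_0}$ holds because minors are polynomials in $t$ and $r$-analytic characters composed with polynomial maps into $1+p^{r+1}$-neighborhoods remain $r$-analytic (Remark on extension of $\ka_{\U}$). Thus $\phi_\gamma\in A^{r}_{\ka_{\U}}(\tt,\C_p\widehat\otimes R_{\U})$, and $\int\phi_\gamma\,d\mu$ is defined after base change of $\mu$ to $\C_p$.

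\textbf{Step 2: the $B_{M,0}$-equivariance in $\gamma$.} For $\beta\in B_{M,0}$ we have
\[
w_0(\gamma\beta)^{-1}Jw_0=(w_0\beta^{-1}w_0)(w_0\gamma^{-1}Jw_0),
\]
with $w_0\beta^{-1}w_0$ lower triangular. Left multiplication by a lower triangular matrix $\ell$ scales the $k$-th leading minor by $\prod_{i\le k}\ell_{ii}$, so $e_{hwt}(\ell X)=\chi(\ell)e_{hwt}(X)$ with the same telescoping character. Evaluating this character at $\ell=w_0\beta^{-1}w_0$ yields $\ka_{\U}^{\vee}(\beta)=w_0(-\ka_{\U})(\beta)$. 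Pulling the scalar out of the integral by linearity of $\mu$ gives $f_{\mu,Z}(\gamma\beta)=\ka_{\U}^{\vee}(\beta)f_{\mu,Z}(\gamma)$.

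\textbf{Step 3: $r$-analyticity in $\gamma\in N^{opp}_{M,1}$.} This is the main obstacle. Here I would show that $(\gamma,t)\mapsto\phi_\gamma(t)$ is jointly analytic. Concretely, on each $r$-ball in $\gamma$ it should expand as a power series in $\gamma$ with coefficients in $A^{r,\circ}(\tt_0,\cdot)$, converging in the Banach norm. Continuity of $\mu$ then allows interchanging $\mu$ with the series.

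The delicate point is uniformity. We need the radius of analyticity of $e_{hwt}$ on $Iw_M^{(w)}$ to be independent of $t$, using $w>1+r_{\U}$ and the compactness of $\tt_0$. Following the analogous argument in \cite{chj} and \cite{drw}, I would reduce to the one-variable statement that an $r$-analytic $\ka_i$ is given on $1+p^{r+1}\Op$ by a power series $\exp(c\log x)$. Composing this with polynomial minors with integral coefficients preserves $r$-analyticity uniformly.
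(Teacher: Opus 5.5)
Your overall route is the paper's. Its proof consists only of two observations: that $t\mapsto e_{hwt,\ka_{\U}}(w_0\gamma^{-1}(Zt_b+t_d)w_0)$ is a test function in $A^{r}_{\ka_{\U}}$ (your Step 1), and that $f_{\mu,Z}(\gamma\beta)=\ka_{\U}^{\vee}(\beta)f_{\mu,Z}(\gamma)$ for $\beta\in B_{M,0}$ (your Step 2). Your leading-minor computations of both characters are correct.

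Your Step 3 supplies something the paper does not address: $r$-analyticity on $N^{opp}_{M,1}$, obtained from joint analyticity in $(\gamma,t)$ and continuity of $\mu$. You are also right that $Z$ must be restricted. For arbitrary $Z\in M_{n,1}(\Op)$ the statement fails: already for $n=1$, $Z=-1$, $t=(1,1)$ one gets $J(Z,t)=0$. In the application, $Z=\z$ lies over $\Fll_w$.

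There is, however, a false claim in Step 1: $w_0\gamma^{-1}J(Z,t)w_0$ does \emph{not} lie in $Iw_M^{(w)}$ in general. Here $\gamma^{-1}$ is upper triangular mod $p$, and $J(Z,t)\in Iw_M^{opp}+p^wM_n(\Op)$ is lower triangular mod $p$. Hence the conjugate $(w_0\gamma^{-1}w_0)(w_0J(Z,t)w_0)$ has the shape (lower triangular mod $p$)$\cdot$(upper triangular mod $p$). For example, take $n=2$, $Z=0$, $t_d=\begin{pmatrix}1&0\\1&1\end{pmatrix}$ and $\gamma=\begin{pmatrix}1&1\\0&1\end{pmatrix}$. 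Then you get $\begin{pmatrix}1&1\\-1&0\end{pmatrix}\notin Iw_M^{(w)}$. So membership in $Iw_M^{(w)}$ cannot be the reason the leading minors are units.

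The correct reason is the LU structure. Reduce modulo the maximal ideal of $\Op$. The $k$-th leading minor of a product $\bar L\bar U$, with $\bar L$ invertible lower triangular and $\bar U$ invertible upper triangular, is the product of the first $k$ diagonal entries of each factor, hence nonzero. When $Z\in p\Z_p^n+p^w\Op^n$, all entries lie in $\Z_p+p^w\Op$. So the leading minors lie in $\Z_p^{\times}(1+p^w\Op)$, where the $\ka_{\U,i}$ are defined because $w>1+r_{\U}$.

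This is exactly why the $w_0$-conjugation appears. You should treat $e_{hwt,\ka_{\U}}$ as a function of the leading principal minors, not of an element of $Iw_M^{(w)}$. Make the same replacement in the uniformity argument of Step 3; with that change your proof goes through.
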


\begin{proof}

This lemma follows from the following straightforward observations:

(1) For any $Z \in M_{n,1}(\mathscr{O}_{\C_p})$ and $\gamma \in Iw_{M}$, the function \[ti=(t_b,t_d)\mapsto e_{hwt,\ka_{\U}}(w_0 \gamma^{-1}(Zt_b+t_d)w_0) \] lies in $A^{r}_{\ka_{\U}}(\tt,R_{\U})$.

(2) For any $\gamma \in Iw_{M}$ and $b \in B_{M}(\Z_p)$, we have \[f_{\mu,Z}(\gamma b)=\ka_{\U}^{\vee}(b)f_{\mu,Z}(\gamma).\]

\end{proof}

 Now we can construct the desired map $\eta_{\ka_{\U}}:\mathscr{OD}^{r}_{\ka_{\U}}\rightarrow \widehat{\underline{\omega}}_{w}^{\ka_{\U}}$ between sheaves on the pro-etale site $\X_{Iw^+,proet}$. It is enough to construct functorial maps $\mathscr{OD}^{r}_{\ka_{\U}}(\V)\rightarrow \widehat{\underline{\omega}}_{w}^{\ka_{\U}}(\V)$ for any affinoid perfectoid $\V$ in $\X_{Iw^+,w,proet}$.

 Recall the coordinate $\mathfrak{z}$ on $X_{\infty,w}$ and consider the following map \[D_{\ka_{\U}}^{r,\circ}(\tt,R_{\U})\widehat{\otimes}\widehat{\mathscr{O}}_{\X_{Iw^+,proet}}(\V_{\infty})\rightarrow C_{\ka_{\U}}^{w-an}(Iw_M,\widehat{\mathscr{O}}_{\X_{Iw^+,proet}}(\V_{\infty})\widehat{\otimes} R_{\U}),\] \[\mu \otimes \delta \mapsto \delta f_{\mu,\z}.\] It is sufficient to show that this map is $Iw^+$-equivariant (the right side is twisted action) and taking $Iw^+$-invariants will produce the desired map $\mathscr{OD}^{r}_{\ka_{\U}}(\V)\rightarrow \widehat{\underline{\omega}}_{w}^{\ka_{\U}}(\V)$ (due to lemma \ref{OD invariant} and remark \ref{twist action invariant}).

 Such \textbf{equivariant property} is a routine check:

 For any $\alpha \in Iw^+$ and $\gamma \in Iw_M$, we have \[\alpha^*(\delta)f_{\alpha\cdot \mu,\z}(\gamma)=\alpha^*(\delta)\int_{\tt}e_{hwt,\ka_{\U}}(w_0\gamma^{-1}J(\z,ti)w_0)d(\alpha\cdot\mu)\]\[=\alpha^*(\delta)\int_{\tt}e_{hwt,\ka_{\U}}(w_0 \gamma^{-1}J(\z,\alpha ti)w_0)d\mu\] \[=\alpha^*(\delta)\int_{\tt}e_{hwt,\ka_{\U}}(w_0 \gamma^{-1}J(\z,\alpha) J(\alpha \cdot \z,ti)w_0)d\mu=\alpha\cdot (\delta f_{\mu,\z}).\] Here the third equality is due to right 1-cocyle property (lemma \ref{right 1cocycle}) \[J(\z,\alpha ti)=J(\z,\alpha)J(\alpha \cdot \z,ti).\]

 What's more, the $u_{p,i}$-action on analytic distribution $D_{\ka_{\U}}^{r,\circ}(\tt,R_{\U})$ is equivalent to left multiplication by $u_{p,i}$, exactly the \textbf{same} as $Iw^+$-action, see the remark \ref{up action on symbol}. Therefore the same argument shows that this map is also $u_{p,i}$-equivariant.

 Combine everything together, we get the following composition map \[\mathfrak{OC}^{r}_{\ka_{\U},\C_p}  \cong H^n_{proet}(\X_{Iw^+}, \mathscr{OD}_{\ka_{\U}}^{r})  \xrightarrow{Res} H^n_{proet}(\X_{Iw^+,w}, \mathscr{OD}_{\ka_{\U}}^{r}) \] \[\xrightarrow{\eta_{\ka_{\U}}}H^{n}_{proet}(\X_{Iw^+,w},\widehat{\underline{\omega}}_{w}^{\ka_{\U}})\rightarrow H^0(\X_{Iw^+,w}, \underline{\omega}_{w}^{\ka_{\U}+n+1})(-n)=M^{\ka_{\U}+n+1}_{Iw^+,w}(-n).\]

 Moreover, this map is $Gal_{\Q_p}$-equivariant. For tame Hecke operators (outside $p$), it acts on both sides via the same correspondence, therefore this map is  equivariant. For Hecke operators at $p$, from the definition of $U_{p,i}$, and the equivariant property of $\eta_{\ka_{\U}}$ respect to $Iw^+$ and $u_{p,i}$ discussed above, this map is also equivariant.

 In summary, we obtain the following result:

 \begin{thm}
 \label{overconvergent es}
 There exits a Hecke and $Gal_{\Q_p}$-equivariant  \textbf{overconvergent Eichler-Shimura map} \[ES_{\ka_{\U}}: \mathfrak{OC}^{r}_{\ka_{\U},\C_p} \rightarrow  M^{\ka_{\U}+n+1}_{Iw^+,w}(-n).\]
 \end{thm}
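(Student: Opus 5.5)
The map $ES_{\ka_{\U}}$ will be the composite of four maps already set up above, and the proof consists in checking that each step is well defined and compatible with the relevant actions. First, the comparison proposition gives $\mathfrak{OC}^{r}_{\ka_{\U},\C_p}\cong H^n_{proet}(\X_{Iw^+},\mathscr{OD}^{r}_{\ka_{\U}})$. We then restrict to the open subspace $\X_{Iw^+,w}$. Next we apply the sheaf map $\eta_{\ka_{\U}}:\mathscr{OD}^{r}_{\ka_{\U}}\to\widehat{\underline{\omega}}_{w}^{\ka_{\U}}$ on $\X_{Iw^+,w,proet}$. Finally Proposition \ref{proposition proetale etale} lands in $H^0(\X_{Iw^+,w},\underline{\omega}^{\ka_{\U}+n+1}_{w})(-n)$. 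The first, second and fourth arrows are canonical. They are $Gal_{\Q_p}$-equivariant because everything is defined over $\Q_p$ and only $\C_p$-coefficients carry the Galois action; the Tate twist $(-n)$ enters through $R^nv_*\widehat{\Ol}\cong\Omega^n(-n)$. So the real content lies in constructing $\eta_{\ka_{\U}}$ and checking Hecke equivariance.

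To construct $\eta_{\ka_{\U}}$, we work on an affinoid perfectoid $\V$ in $\X_{Iw^+,w,proet}$. By Lemma \ref{OD invariant} and Remark \ref{twist action invariant}, both sides are $Iw^+$-invariants of "big" spaces over $\V_\infty$. We use the pointwise pairing $\mu\otimes\delta\mapsto \delta f_{\mu,\z}$ of Lemma \ref{highest weight construction}, and verify three things.

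\begin{enumerate}
\item The function $t\mapsto e_{hwt,\ka_{\U}}(w_0\gamma^{-1}J(\z,t)w_0)$ is $r$-analytic in $t$ and has the correct $B_M^{opp}(\Z_p)$-equivariance. This is possible because $\z$ lies in $p^w$-neighbourhoods of $p\Z_p$ and $w>1+r_{\U}$, so $J(\z,t)$ lies in $Iw_M^{(w)}$, where $\ka_{\U}$ extends.
\item The resulting $f_{\mu,\z}$ is $w$-analytic on $Iw_M$ and satisfies $f(\gamma b)=\ka_{\U}^\vee(b)f(\gamma)$. This follows from the multiplicativity of the leading minors under right multiplication by $w_0 B_M w_0=B_M^{opp}$.
\item The pairing is $Iw^+$-equivariant with respect to the twisted action. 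This is the cocycle computation using Lemma \ref{right 1cocycle}.
\end{enumerate}

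Continuity in $\mu$ (for the profinite topology coming from the filtration $Fil^j$) lets us extend the pairing to the completed tensor product. Taking invariants then gives $\eta_{\ka_{\U}}(\V)$, and functoriality in $\V$ is clear.

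For Hecke equivariance, the tame operators act on both sides through the same correspondence $pr_1,pr_2$. Since $\z'=\z''$, $\eta$ commutes with $\psi_\gamma$ and with the traces, and the projection formula and Kodaira–Spencer isomorphism are compatible with finite étale pullback and trace. For $U_{p,i}$, Remark \ref{up action on symbol} and the proof of Proposition \ref{define up} reinterpret $u_{p,i}$ as left multiplication on $\widetilde{Iw^{opp}}$ and as twisted translation on $\widetilde{Iw_M}$ with $J(\z,u_{p,i})=u_{p,i,-}$. The same cocycle computation then shows $\eta\circ u_{p,i}=u_{p,i}\circ\eta$, and summing over the coset representatives $\delta_{i,j}$ gives equivariance.

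I expect the main obstacle to be the equivariance for $u_{p,i}$. Here $u_{p,i}$ does not preserve $\X_{Iw^+,w}$ in the same way, since it maps into $\X_{\infty,w+1}$, and the normalisation of the extended character on $B_M(\Q_p)$ versus $B^{opp}(\Q_p)$ must match: $\ka_{\U}$ is extended trivially on $X_*(T_M)$ on both sides, and we must check that $e_{hwt}$ is compatible with this up to the $w_0$-switch. My first attempt would be to verify the identity $e_{hwt}(w_0\gamma^{-1}u_{p,i,-}J(u_{p,i}\cdot\z,t)w_0)$ directly on the big cell $N^{opp}_{M,1}$, and to compare it with the $u_{p,i}$-action on $\tt$ written in terms of the decomposition $(t_{b,0},t_{d,0})\beta$.
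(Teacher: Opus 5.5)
Your proposal is correct and follows the paper's proof essentially step for step. It uses the same four-arrow composite through $H^n_{proet}(\X_{Iw^+,w},\mathscr{OD}^{r}_{\ka_{\U}})$ and $H^n_{proet}(\X_{Iw^+,w},\widehat{\underline{\omega}}^{\ka_{\U}}_{w})$. It uses the same map $\mu\otimes\delta\mapsto\delta f_{\mu,\z}$, made $Iw^+$-equivariant by the right $1$-cocycle identity of Lemma~\ref{right 1cocycle}. And, exactly as in the paper, it obtains $U_{p,i}$-equivariance by reading $u_{p,i}$ as left multiplication (Remark~\ref{up action on symbol}), so the same computation applies.

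The $u_{p,i}$ subtleties you anticipate are passed over in the paper, and they are harmless. The formula for $\eta_{\ka_{\U}}$ is the same for all $w$, and extending each $\ka_{\U,k}$ by $\ka_{\U,k}(p)=1$ makes $e_{hwt,\ka_{\U}}$ compatible with both extended inductions. One small slip: the rule $f_{\mu,\z}(\gamma b)=\ka_{\U}^{\vee}(b)f_{\mu,\z}(\gamma)$ comes from \emph{left} multiplication by the lower triangular $w_0b^{-1}w_0$. It is right multiplication by the upper triangular $w_0bw_0$, with $b\in B_M^{opp}(\Z_p)$, that gives the equivariance in $t\in\tt$.
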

 
 \begin{rem}
 Our methods works for more general unitary Shimura varieties with signature $(a,b)$. In that setting, the construction of lemma \ref{highest weight construction} is more subtle.  I will write these details in later papers of this series.
 \end{rem}

\subsection{Relation with the classical Eichler-Shimura map}
\label{classical es}
In this section we will deduce compatibility between overconvergent Eichler-Shimura map and classical Eichler-Shimura map. In particular, if the weight $\ka$ is classical, the image of $p$-adic Eichler-Shimura map is contained in the image of classical Eichler-Shimura map.

A classical weight is an algebraic dominant weight:   \[\ka=(\ka_1,\cdots,\ka_n)\in X^*(T)=\Z^{n},  \ka_1...\geq \ka_n.\] As we work with "reduced" weight, we will  view $(0,\ka_1,\cdots,\ka_n)$ as the corresponding weight for $T$. To produce an irreducible algebraic representation for $GL_{n+1}$, the necessary and sufficient condition is that \[\ka_1 \geq \cdots \ka_n \geq 0= \ka_0.\] And the weight $(\ka,0)$ will be the corresponding \textbf{representation weight}.

\begin{rem}

Here is a subtle issue about weights. In the definition of "analytic  distribution" (indeed algebraic version)  for $GL_{n+1}$, we will use $ (0,w_0(\ka))$ while the resulting $GL_{n+1}$-representation  has the representation weight $(\ka,0)$. Implicitly there is a  "\textbf{switch}" process. This hidden fact is due to basic behaviour of Hodge-Tate period map in the viewpoint of  representation theory for $GL_{n+1}$ and $GL_n$. This issue doesn't arise in the  Siegel case treated in  \cite{drw}, since $Sp_{2n}$ is  self-dual,  whereas $GL_N$ is not. This leads a crucial difference between our construction and theirs.  I will make this subtle fact more explicitly in the case of $GU(a,b)$ (later papers of this series).

\end{rem}

Set $\widetilde{\ka}=(\ka,0)$. Recall the classical highest weight  representation $\Vc_{\widetilde{\ka}}$ (see section \ref{flag variety}). In that section, we're using traditional convention, $V_{\widetilde{\ka}}=Ind_{B}^{G,alg}(\widetilde{\ka}^{\vee},\Q_p)$. More concretely, it is \[\Vc_{\widetilde{\ka}}=\left\{f:GL_{n+1} \rightarrow \A^1
\ \middle| \
\begin{aligned}
& f(g b)=\widetilde{\ka}^{\vee}(b)f(g), \quad \forall g \in GL_{n+1}, b \in B,\\
& \text{$f$ is an algebraic map over  $\Q_p$}.
\end{aligned}
\right \}.\] It is a finite dimensional $\Q_p$-vector space with left $G(\Q_p)$-action \[(g_2 \cdot f)(g_1)=f(g_2^{-1}g_1) \] for $g_1$, $g_2 \in G(\Q_p)$.

During our construction for overconvergent cohomology, we are using other conventions (opposite Borel subgroup $B^{opp}$ and distribution). Now we rewrite $\Vc_{\widetilde{\ka}}$ in this convention, as a kind of dual space. Recall the convention $\widetilde{w_0}(\ka)=(0,\ka_n,\cdots,\ka_1)$, then the dual space of $\Vc_{\widetilde{\ka}}$ is exactly \[Ind_{B}^G((\ka,0),\Q_p)\cong Ind_{B^{opp}}^G(\widetilde{w_0}(\ka),\Q_p)\].  In particular, our convention in weights for analytic distribution is \textbf{compatible} with usual notations in classical representation theory.

What's more, during the construction of overconvergent Eichler-Shimura map, we have constructed $e_{hwt,\ka}$. When the weight $\ka$ is classical, it naturally extends to a function on $M_{n}(\C_p)$: For any matrix $X=(X_{i,j})\in M_{n}(\C_p)$, we define \[e_{hwt,\ka}(X):=X_{1,1}^{\ka_1-\ka_2}\times \det((X_{i,j})_{1\leq i,j \leq 2})^{\ka_2-\ka_3}\times...\times \det(X)^{\ka_n}.\] And it  is  the \textbf{highest weight vector} (respect to the \textbf{opposite Borel subgroup} $B_M$) in the representation $Ind_{B_M}^{M}(\ka,\Q_p)$ (its representation weight is $\ka^{\vee}$) of $GL_{n}$. We will also apply $e_{hwt,\ka}$ to study classical Eichler-Shimura map.

\begin{rem}

We stressed that in this section when we say highest weight vector, it is always respect to the \textbf{opposite} Borel subgroup. It is the unique eigenspace of the opposite Borel subgroup.   
\end{rem}

First we introduce the classical etale cohomology defined by these highest weight representations. The left action $G(\Q_p)$ on $\Vc_{\ka}$ induces etale $\Q_p$-local system on $X_{Iw^+}$ which we still denote by the same symbol. Moreover, we have the following natural isomorphisms \[H_{et}^*(\X_{Iw^+},\Vc_{\ka})\cong H_{et}^*(X,\Vc_{\ka})\cong H^*(X_{Iw^+}(\C),\Vc_{\ka}).  \]  And similarly there are Hecke operators (tame $T_{\gamma}$) and $U_{p,i}$ acting on them.

Then similar to the discussion of overconvergent cohomology, we will introduce pro-etale sheaves to study these classical cohomology. We define $\Ol \Vb_{\ka} $ on $\X_{Iw^+,proet}$ \[\Ol\Vb_{\ka}:=v^{-1}\Vc_{\ka}\otimes \widehat{\Ol}_{\X_{Iw,proet}}.\] Similarly we have the following natural isomorphism \[H_{et}^*(\X_{Iw^+},\Vc_{\ka})\otimes \C_p \cong H^*_{proet}(\X_{Iw^+},\Ol \Vb_{\ka}).\] What's more, let $\V \rightarrow \X_{Iw^+}$ be an affinoid perfectoid object in $\X_{Iw^+,proet}$ and set \[\V_{\infty}:=\V_{\infty}\times_{\X_{Iw^+}}\X_{\infty},\] we have similar  relation \[ \Ol\Vb_{\ka}(\V)=(\Vc_{\ka}\otimes \widehat{\Ol}_{\X_{Iw,proet}}(\V_{\infty}))^{Iw^+}.\]

On the other hand we similarly introduce the $p$-adically completed automorphic sheaf on $\X_{Iw^+,proet}$ defined by \[\underline{\omega}_{Iw^+}^{\ka}:=\varprojlim_{m}(\underline{\omega}_{Iw^+}^{\ka,+}\otimes _{\mathscr{O}_{\X_{Iw^+}}} \mathscr{O}_{\X_{Iw^+,proet}/p^{m}})[\frac{1}{p}].\] And we also have the analogue (proposition \ref{classical form}):

\[\widehat{\underline{\omega}}_{Iw^+}^{\ka}(\V)=\left\{f \in P_{\ka}(GL_n, \widehat{\Ol}_{\X_{\infty,w}}(\V_{\infty})) :\begin{aligned}
&\gamma^*f=\rho_{\ka}(\z\gamma_b+\gamma_d)f,\\
& \forall \gamma=\begin{pmatrix}\gamma_a & \gamma_b \\ \gamma_c & \gamma_d  \end{pmatrix}\in Iw^+.
\end{aligned}\right\}.\]

\textbf{The classical Eichler-Shimura map} We first show that Hodge-Tate period map has nice properties in representation theory.

Let $V_{std,n}$ denote the standard representation of $GL_n$, which has \textbf{induction weight} $(0,\cdots,0,-1)$ (\textbf{representation weight} is $(1,0,\cdots,0)$). It has standard basis $\{\widetilde{e_1},\cdots,\widetilde{e_n}\}$.  And use  $\{e_0,...,e_n\}$  to denote the standard basis for $V_{std,n+1}$. Consider the $GL_{n}$-equivariant map \[\delta:V_{std,n+1}\rightarrow V_{std,n},\]\[e_i \mapsto \widetilde{e_i} (i>0); e_0 \mapsto 0. \] This map sends the highest weight vector (for opposite Borel subgroup $B^{opp}$) $e_n$ to highest weight vector (for opposite Borel subgroup $B_M^{opp}$) $\widetilde{e_n}$. Moreover, for any $1 \leq k \leq n$, the induced map $\wedge^{k} V_{std,n+1} \rightarrow \wedge^{k}V_{std,n}$ has the property \[e_{n}\wedge e_{n-1} \cdots \wedge e_{n-k+1} \mapsto \widetilde{e_{n}}\wedge \widetilde{e_{n-1}} \cdots \wedge \widetilde{e_{n-k+1}}.\] For any classical weight $\ka=(\ka_1,\cdots,\ka_n)$, its corresponding algebraic irreducible representation $\Vc_{\ka}$ ($(\ka,0)$ is \textbf{representation weight}) is a direct summand of the $GL_{n+1}$-representation \[V_{std}^{\ka}:=(Sym^{\ka_{1}-\ka_{2}}V_{std,n+1})\otimes (Sym^{\ka_{2}-\ka_{3}}(\wedge^2 V_{std,n+1}))\otimes \cdots \otimes (Sym^{\ka_{n}}(\wedge^n V_{std,n+1})).\] The weight $(\ka_{1},...,\ka_{n})$ also produces an algebraic irreducible representation $\Vc_{\ka,n}$ (with \textbf{representation weight} $\ka$) for $GL_{n}$. Similarly $\Vc_{\ka,n}$  is a direct summand of $V_{std,n}^{\ka}$ (analogue of $V_{std}^{\ka}$). The map $\delta$ induces a map $V_{std}^{\ka}\rightarrow V_{std,n}^{\ka}$, which further produces a $GL_n$-equivariant surjection  $\Vc_{\ka}\twoheadrightarrow \Vc_{\ka,n}$, and the highest weight vector (for $B^{opp}$) is sent to the highest weight vector (for $B_M^{opp}$).

This discussion fits into the picture of Hodge-Tate period map. Recall the map \[\Ol_{\X_{\infty}}^{n+1}\rightarrow \underline{\omega}_{{\infty}},\] it exactly corresponds to the map $\delta: V_{std,n+1}\rightarrow V_{std,n}$ in representation theory. And similar to $\Vc_{\ka}$, the $GL_{n+1}$ representation $V_{std}^{\ka}$ also produces etale local system and related sheaf in pro-etale topology. The $GL_n$-representation $V_{std,n}^{\ka}$ correspondences to automorphic bundle \[\underline{\omega}_{Iw^+,std}^{\ka}:=(Sym^{\ka_{1}-\ka_{2}}\underline{\omega}_{Iw^+})\otimes (Sym^{\ka_{2}-\ka_{3}}(\wedge^2 \underline{\omega}_{Iw^+}))\otimes \cdots \otimes (Sym^{\ka_{n}}(\wedge^n \underline{\omega}_{Iw^+})).\] It has a natural surjection to $\underline{\omega}_{Iw^+}^{\ka}$. Put everything together, we obtain the comparison map \[\eta_{\ka}^{alg}: \Ol \Vb_{\ka}\rightarrow \underline{\omega}_{Iw^+}^{\ka}.\] The map implies the classical Eichler-Shimura map of weight $\ka$ \[
ES_{\ka}^{alg}: \xymatrix{
H^n(\X_{Iw^+}, \Vc_{\ka}) \otimes_{\Q_p}\C_p  \ar[r]^(0.5){\cong} & H^{n}_{proet}(\X_{Iw^+}, \Ol \Vb_{\ka}) \ar[ld]_{\eta_{\ka}^{alg}} & \\
 H^n_{proet}(\X_{Iw^+}, \underline{\omega}_{Iw^+}^{\ka} ) \ar[r] & H^0(\X_{Iw^+}, \underline{\omega}_{w}^{\ka+n+1})(-n).
}
\]This map coincides with the classical construction in theorem VI 6.2 of \cite{faltingschai}. It is a Hecke and $Gal_{\Q_p}$   equivariant surjection.
  
What's more, when restrict to $\X_{Iw^+,w}$, we can make this map more explicitly in a way similar to overconvergent Eichler-Shimura map.

Consider a matrix (column vector) $Z \in M_{n,1}(\C_p)$, $g=\begin{pmatrix} g_a & g_b \\ g_c & g_d \end{pmatrix} \in GL_{n+1}(\Q_p)$ and $\gamma \in GL_{n}(\Q_p)$, recall $J(Z,g)=Z g_b+g_d$.  Similar to overconvergent Eichler-Shimura map, we obtain the following construction:

For  any $\mu \in \Vc_{\ka}\otimes \C_p =(Ind_{B^{opp}}^G(\widetilde{w_0}(\ka),\C_p))^{\vee} $, we can  associate the following element $f_{\mu,Z} \in P_{\ka}(GL_n,\C_p)$ to them: \[f_{\mu,Z}^{alg}: \gamma \mapsto \int_{G(\Q_p)}e_{hwt,\ka}(w_0 \gamma^{-1}J(Z,g)w_0)d\mu.\] This is well defined  because $e_{hwt,\ka}$ extends naturally to $M_{n}(\C_p)$ and we have the same observations in lemma \ref{highest weight construction}.

In conclusion, we obtain the following proposition:

\begin{prop}

(1) Let $V$ be an affinoid perfectoid object in $\X_{Iw^+,w,proet}$ and $\V_{\infty}$ is the base change to $\X_{\infty,w,proet}$. There is a well-defined $Iw^+$-equivariant map
\[\widetilde{\eta}_{\ka}^{alg}: \Vc_{\ka} \otimes \mathscr{O}_{\X_{\infty,w,proet}}(\V_{\infty}) \rightarrow P_{\ka}(GL_n, \mathscr{O}_{\X_{\infty,w,proet}}(\V_{\infty}))\] defined by \[\mu \otimes \varphi \mapsto \varphi f_{\mu,\z}^{alg},\] \[f_{\mu,\z}^{alg}(\gamma)=\int_{G(\Q_p)}e_{hwt,\ka}(w_0 \gamma^{-1}J(\z,g)w_0)d\mu.\]

(2) The map $\eta_{\ka}^{alg}$ is obtained from $\widetilde{\eta}_{\ka}^{alg}$ via taking $Iw^+$-invariants.
\end{prop}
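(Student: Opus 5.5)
The plan is to treat part (1) exactly as in Lemma \ref{highest weight construction} and the equivariance computation that precedes Theorem \ref{overconvergent es}. The algebraic setting is simpler because everything is polynomial. The proof has three steps: well-definedness of $f^{alg}_{\mu,\z}$, its $Iw^+$-equivariance, and the identification of the $Iw^+$-invariants with $\eta^{alg}_\ka$.

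\textbf{Well-definedness.} Identify $\Vc_\ka\otimes\C_p$ with the dual of $Ind_{B^{opp}}^G(\widetilde{w_0}(\ka),\C_p)$. Fix $Z$ and $\gamma\in GL_n$, and consider the function
\[
\phi_{\gamma,Z}: g\mapsto e_{hwt,\ka}(w_0\gamma^{-1}J(Z,g)w_0).
\]
I would check two things.
\begin{enumerate}
\item $\phi_{\gamma,Z}$ is algebraic in $g$. This holds because $J(Z,g)=Zg_b+g_d$ is linear in $g$, and $e_{hwt,\ka}$ is a product of powers of leading principal minors with nonnegative exponents (by dominance), so it extends polynomially to $M_n$.
\item $\phi_{\gamma,Z}$ transforms under right translation by $B^{opp}$ through $\widetilde{w_0}(\ka)$. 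For $b\in B^{opp}$ one has $J(Z,gb)=J(Z,g)b_d$, and the entries $b_a,b_c$ do not appear. Conjugating by $w_0$ turns the lower triangular $b_d$ into an upper triangular matrix. The leading minors of $Xu$ with $u$ upper triangular pick up the diagonal of $u$, which produces the character $w_0(\ka)$ on $b_d$, extended trivially on the $GL_1$ factor; this is $\widetilde{w_0}(\ka)$.
\end{enumerate}
So pairing with $\mu$ makes sense. Next, $f^{alg}_{\mu,Z}(\gamma\beta)=\ka^\vee(\beta)f^{alg}_{\mu,Z}(\gamma)$ for $\beta\in B_M$. Here $\beta^{-1}$ is upper triangular, $w_0\beta^{-1}w_0$ is lower triangular, and left multiplication by a lower triangular matrix scales the leading minors by its diagonal entries, giving exactly the character $\ka^\vee$. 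Polynomiality in $\gamma$ is clear, so $f^{alg}_{\mu,Z}\in P_\ka(GL_n,\C_p)$. Replacing $Z$ by $\z$ and linearly extending in $\varphi$ gives $\widetilde\eta^{alg}_\ka$.

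\textbf{Equivariance.} For $\alpha\in Iw^+$ I would repeat the routine computation used before Theorem \ref{overconvergent es}. Pushing $\alpha$ through the integral gives $J(\z,\alpha g)$ in the integrand. The right $1$-cocycle property of Lemma \ref{right 1cocycle}, extended to $G(\Q_p)$ formally as in the paper, gives $J(\z,\alpha g)=J(\z,\alpha)J(\alpha\cdot\z,g)$. Combined with the transformation law $\alpha^*\z=\alpha\cdot\z$ from Lemma \ref{transform law}, this shows that the map intertwines the diagonal $Iw^+$-action on the source with the twisted action $\rho_\ka(J(\z,\alpha))\alpha^*$ on the target. One caveat: since we work on $\X_{\infty,w}$, $J(\z,g)$ need not be invertible for general $g\in G(\Q_p)$. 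This does not matter, because $e_{hwt,\ka}$ is polynomial on all of $M_n$.

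\textbf{Identification with $\eta^{alg}_\ka$.} This is the main step. Taking $Iw^+$-invariants, the source gives $\Ol\Vb_\ka(\V)$ and the target gives $\widehat{\underline{\omega}}^\ka_{Iw^+}(\V)$, using the analogue of Proposition \ref{classical form}. What remains is to show that the induced map coincides with $\eta^{alg}_\ka$, which was defined through $\delta:V_{std,n+1}\to V_{std,n}$, the surjection $V^\ka_{std}\to\Vc_\ka$, and the trivialization by $\s$. My plan is to reduce to the fundamental case $\ka$ corresponding to $\wedge^kV_{std,n+1}$, and then pass to tensor products and symmetric powers by multiplicativity of $e_{hwt,\ka}$ in $\ka$.

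For the standard representation, pulled back along $\pi_{HT}$, the map $\delta$ is the quotient $\Ol^{n+1}\to\underline\omega_\infty$. In the frame $\s$ it sends $e_0\mapsto\z$ (up to the sign convention on $\z$) and $e_i\mapsto\s_i$ for $i\geq1$. Consequently the row of coordinates of $g\cdot v$ is read off from $J(\z,g)=\z g_b+g_d$. Pairing against the highest-weight functional $e_{hwt}$ then recovers the coordinate description. This is precisely why the integrand involves $J(\z,g)$.

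The part I expect to be hardest is getting the dualities and the $w_0$-twists to match. In particular I need to verify, using the isomorphism $\Vc_{(\ka,0)}^\vee\cong Ind_{B^{opp}}^G(\widetilde{w_0}(\ka))$, that the surjection $\Vc_\ka\to\Vc_{\ka,n}$ sends highest weight vector to highest weight vector. I would check this first on $\wedge^k$ by an explicit minor computation, and then appeal to uniqueness of the highest-weight line for $B_M^{opp}$ to conclude in general.
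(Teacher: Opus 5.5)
Your proposal is correct. For part (1) it follows the paper's argument. The paper only says that $Iw^+$-equivariance follows from the same $1$-cocycle computation as for $\eta_{\ka_{\U}}$, and your checks of algebraicity and of the $B^{opp}$- and $B_M$-transformation laws fill in what it leaves implicit. One small point: polynomiality of the factor $\det(X)^{\ka_n}$ uses the standing assumption $\ka_n\geq 0$, not dominance alone.

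For part (2) you take a different route from the paper.

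\textbf{The paper's route.} It argues by rigidity. At $\z=0$ one has $J(0,g)=g_d$ and $\gamma\cdot 0=\gamma_d^{-1}\gamma_c$, so $\widetilde{\eta}^{alg}_{\ka}$ is $M$-equivariant there. It sends the $B^{opp}$-highest weight vector to the $B_M^{opp}$-highest weight vector, which is also the defining property of $\eta^{alg}_{\ka}$. The paper then invokes $Iw^+$-equivariance to conclude.

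\textbf{Your route.} You identify the two maps at every point directly. In the frame $\s$ the Hodge--Tate quotient is the matrix $(\z\;\,\mathbb{I}_n)$, and $J(\z,g)$ is exactly the last $n$ columns of $(\z\;\,\mathbb{I}_n)g$. So on $\wedge^k$ the integrand is the highest-weight coordinate, in the frame $\s\gamma$, of $\wedge^k$ of the quotient map applied to $g(e_{n-k+1}\wedge\dots\wedge e_n)$. General $\ka$ then follows by multiplicativity.

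\textbf{What each buys.} The paper's argument is shorter but silently uses two facts.
\begin{enumerate}
\item An $M$-equivariant map $\Vc_\ka\to\Vc_{\ka,n}$ is determined by the image of the highest weight vector. This is multiplicity one in the $GL_{n+1}\downarrow GL_n$ branching.
\item Agreement at $\z=0$ propagates to all of $\Fll_w$. But the $Iw^+$-orbit of $0$ is only $M_{n,1}(p\Z_p)$. Propagation therefore needs polynomiality in $\z$ plus Zariski density, or equivariance under all of $N^{opp}_{\mu}$, where $J\equiv\mathbb{I}_n$.
\end{enumerate}
Your computation avoids that propagation step and explains where $J(\z,g)$ comes from. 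Its cost is the Cartan-component bookkeeping. You must check that $\eta^{alg}_\ka$, defined via $V^\ka_{std}\twoheadrightarrow\Vc_\ka$ and $\underline{\omega}^{\ka}_{Iw^+,std}\twoheadrightarrow\underline{\omega}^{\ka}_{Iw^+}$, is compatible with $e_{hwt,\ka+\ka'}=e_{hwt,\ka}\,e_{hwt,\ka'}$. This is where your highest-weight check on $\wedge^k$ is needed.
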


\begin{proof}

For the first statement, we only remains to check the $Iw^+$-equivariant property, which follows from the same computations in the construction of overconvergent Eichler-Shimura map.

For the second statement, we observe that over the point $\z=0$, the construction of $\widetilde{\eta}_{\ka}^{alg}$ maps the highest weight vector for $B^{opp}(\Q_p)$ to highest weight vector for $B_{M}^{opp}(\Q_p)$, which is the same property of $\eta_{\ka}^{alg}$. Further apply the $Iw^+$-equivariant property of $\widetilde{\eta}_{\ka}^{alg}$ we finish the proof.

\end{proof}

Finally we deduce  the following compatibility:

\begin{thm}
\label{thm p adic and classical es}
Let $\ka$ be a classical weight, then the overconvergent Eichler-Shimura map $ES_{\ka}$ factors through the classical Eichler-Shimura map. In particular, the image of $ES_{\ka}$ is contained in the finite dimensional $\C_p$-vector  space of classical forms $M^{\ka+n+1}_{Iw^+}(-n)$. 
\end{thm}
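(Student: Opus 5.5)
The plan is to compare the two constructions at the level of the ``big'' pro-étale sheaves over $\X_{Iw^+,w,proet}$. We then pass to cohomology using the functoriality of the spectral sequence and the projection formula from Proposition \ref{proposition proetale etale}. Let $\ka$ be a classical weight, with $\Vc_{\ka}^{\vee}\cong Ind_{B^{opp}}^{G}(\widetilde{w_0}(\ka),\Q_p)$.

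\textbf{Step 1: the map on coefficients.} Algebraic induction embeds in $r$-analytic induction. Restricting functions on $G$ to $Iw^{opp}$ (equivalently, to $\tt$) gives an $Iw^+$- and $u_{p,i}$-equivariant inclusion
\[Ind_{B^{opp}}^{G}(\widetilde{w_0}(\ka),\Q_p)\hookrightarrow A^{r}_{\ka}(\tt,\Q_p).\]
Dualizing yields an equivariant specialization map $\sigma_{\ka}: D^{r}_{\ka}(\tt,\Q_p)\rightarrow \Vc_{\ka}$. This induces a Hecke-equivariant map
\[\mathfrak{OC}^{r}_{\ka,\C_p}\rightarrow H^n_{et}(\X_{Iw^+},\Vc_{\ka})\otimes\C_p,\]
and on pro-étale sheaves a map $\mathscr{OD}^{r}_{\ka}\rightarrow \Ol\Vb_{\ka}$. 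The latter is compatible with the descriptions as $Iw^+$-invariants (Lemma \ref{OD invariant} and its classical analogue).

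\textbf{Step 2: commutativity on the big sheaves.} This is the heart of the argument. We check that the square
\[D^{r}_{\ka}(\tt)\widehat{\otimes}\widehat{\Ol}(\V_\infty)\rightarrow C^{w-an}_{\ka}(Iw_M,\widehat{\Ol}(\V_\infty)),\qquad \Vc_{\ka}\otimes\widehat{\Ol}(\V_\infty)\rightarrow P_{\ka}(GL_n,\widehat{\Ol}(\V_\infty))\hookrightarrow C^{w-an}_{\ka}(Iw_M,\widehat{\Ol}(\V_\infty))\]
commutes, i.e. that $f_{\mu,\z}=f^{alg}_{\sigma_{\ka}(\mu),\z}$ as functions on $Iw_M$.

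For classical $\ka$ the function $e_{hwt,\ka}$ is the polynomial function $X_{1,1}^{\ka_1-\ka_2}\cdots\det(X)^{\ka_n}$. Hence for fixed $\gamma$ and $\z$ the integrand
\[g\mapsto e_{hwt,\ka}(w_0\gamma^{-1}J(\z,g)w_0)\]
is an algebraic function on $G$. It transforms under right multiplication by $B^{opp}$ via $\widetilde{w_0}(\ka)$, exactly as in Lemma \ref{highest weight construction}(1), with the same computation. So it is an element of $Ind_{B^{opp}}^{G}(\widetilde{w_0}(\ka))\otimes\widehat{\Ol}(\V_\infty)$, and its restriction to $\tt$ is the integrand in $f_{\mu,\z}$. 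Pairing with $\mu$ through the inclusion of Step 1 then gives the required identity by definition of $\sigma_{\ka}$.

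Taking $Iw^+$-invariants and using the Proposition above (that $\eta^{alg}_{\ka}$ is the $Iw^+$-invariant part of $\widetilde{\eta}^{alg}_{\ka}$), we obtain
\[\eta_{\ka}=(\text{inclusion }\widehat{\underline{\omega}}^{\ka}_{Iw^+}\hookrightarrow\widehat{\underline{\omega}}^{\ka}_w)\circ\eta^{alg}_{\ka}\circ\sigma_{\ka}\]
on $\X_{Iw^+,w,proet}$.

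\textbf{Step 3: passing to $H^n$.} We apply $H^n_{proet}$ and the edge maps from the proof of Proposition \ref{proposition proetale etale}. The projection formula and $R^nv_*\widehat{\Ol}\cong\Omega^n(-n)$ are natural in the coherent sheaf, so they commute with the inclusion $\underline{\omega}^{\ka}_{Iw^+}\hookrightarrow\underline{\omega}^{\ka}_w$ and with the Kodaira--Spencer twist. We also need restriction from $\X_{Iw^+}$ to $\X_{Iw^+,w}$ to commute with all of this, which is formal. Chaining these gives
\[ES_{\ka}=\iota\circ ES^{alg}_{\ka}\circ\sigma_{\ka},\]
where $\iota: M^{\ka+n+1,cl}_{Iw^+}(-n)\rightarrow M^{\ka+n+1}_{Iw^+,w}(-n)$ is the injective restriction map from the earlier Proposition on classical forms. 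The image statement follows immediately, since $H^0(\X_{Iw^+},\underline{\omega}^{\ka+n+1})$ is finite dimensional by properness.

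\textbf{Expected obstacle.} The main difficulty is Step 2, specifically the weight bookkeeping. One must verify that the classical integrand genuinely lies in $Ind_{B^{opp}}^{G}(\widetilde{w_0}(\ka))$ with the correct character. This requires tracking the $w_0$-conjugation, the ``switch'' between $(0,w_0(\ka))$ and $(\ka,0)$, and confirming that the $\tt$-description of the inclusion is compatible with the $G$-description. If the characters turn out mismatched, I would first recheck the computation at $\z=0$, comparing highest weight vectors for $B^{opp}$ as in the proof of the preceding Proposition, and then extend by $Iw^+$-equivariance.
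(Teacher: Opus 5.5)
Your proposal is correct and follows essentially the same route as the paper. Both dualize the inclusion of algebraic induction into $r$-analytic induction to get $D^{r}_{\ka}\twoheadrightarrow \Vc_{\ka}$, check that the square of big pro-etale sheaves commutes ($\eta_\ka$ versus $\widetilde{\eta}^{alg}_\ka$ followed by $\widehat{\underline{\omega}}^{\ka}_{Iw^+}\hookrightarrow\widehat{\underline{\omega}}^{\ka}_{w}$), and pass to $H^n$ by naturality together with the injective restriction of classical forms to $\X_{Iw^+,w}$. Your Step 2 also supplies a verification the paper only asserts: the integrand lies in $Ind_{B^{opp}}^{G}(\widetilde{w_0}(\ka))$, which follows from $J(\z,gb)=J(\z,g)b_d$ for $b\in B^{opp}$ and from $\ka_n\ge 0$ making $e_{hwt,\ka}$ polynomial on $M_n$.
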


\begin{proof}

There is a natural inclusion \[Ind_{B^{opp}(\Q_p)}^{G(\Q_p),alg}(\widetilde{w_0}(\ka),\Q_p) \hookrightarrow  A_{\ka}^{r}(\tt,\Q_p).\] Taking dual we obtain \[D_{\ka}^{r}(\tt,\Q_p)\twoheadrightarrow  \Vc_{\ka},\] which further induces a map between sheaves \[\Ol \mathscr{D}^{r}_{\ka} \rightarrow \Ol \mathscr{V}_{\ka}.\] Moreover, we have the following commutative diagram

\[
\xymatrix{
\Ol \mathscr{D}_{\ka}^{r} \ar[r]^{\eta_{\ka}} \ar[d] & \widehat{\underline{\omega}}_{w}^{\ka}   \\
\Ol \mathscr{V}_{\ka} \ar[r]^{\widetilde{\eta}_{\ka}^{alg}} &  \widehat{\underline{\omega}}_{Iw^+}^{\ka}  \ar[u]
}
\]

As both overconvergent Eichler-Shimura map and classical Eichler-Shimura map can be carried out by the same process, and recall the injective map \[H^0(\X_{Iw^+}, \underline{\omega}_{Iw^+}^{\ka}) \hookrightarrow H^0(\X_{Iw^+,w}, \underline{\omega}_{Iw^+}^{\ka} ),\]put everything together, we get the desired commutative diagram:

\[
\xymatrix{
H^n_{proet}(\X_{Iw^+},\Ol \mathscr{D}_{\ka}^{r}) \ar[r]^{ES_{\ka}} \ar[d] & H^0(\X_{Iw^+,w},  \underline{\omega}_{w}^{\ka})   \\
H^n_{proet}(\X_{Iw^+},\Ol \mathscr{V}_{\ka}) \ar[r]^{ {ES}_{\ka}^{alg}} &  H^0(\X_{Iw^+}, \underline{\omega}_{Iw^+}^{\ka})  \ar[u]
}
\]
\end{proof}

\begin{rem}

A simple corollary is that the kernel of $ES_{\ka}$ is   infinite dimensional. Its deeper structure, which is further related with   higher Coleman theory,  will be explored in a subsequent paper in this series.
\end{rem}

\section{The Eichler-Shimura map over the eigenvariety}
\label{section6}
Previously we have constructed overconvergent Eichler-Shimura map. Indeed it is functorial respect to weights and in this section we will glue it over the eigenvariety.

\subsection{Construction of the eigenvariety}
In this section we  quickly recall the construction of the eigenvariety from the overconvergent cohomology, following the standard methods in \cite{hansen2017universal} and \cite{chj}.

\begin{defn}
(1) Let $(R_{\U},\ka_{\U})$ be a \textbf{small weight}, we call it is \textbf{open} if the natural map  \[\U^{rig}=Spa(R_{\U},R_{\U})\rightarrow \W \]is an open immersion.

(2) Let $(R_{\U},\ka_{\U})$ be an \textbf{affinoid weight}, we call it is \textbf{open} if the natural map  \[\U^{rig}=Spa(R_{\U},R_{\U}^{\circ})\rightarrow \W \]is an open immersion.
\end{defn}

  Let $(R_{\U},\ka_{\U})$ be an open weight with an integer $r>1+r_{\U}$. we can consider the \textit{Borel-Serre chain complex } $C_{\bullet}(Iw^+,A_{\ka_{\U}}^{r}(\tt,R_{\U}))$ (resp. \textit{Borel-Serre cochain complex} $C^{\bullet}(Iw^+,D_{\ka_{\U}}^{r}(\tt,R_{\U}))$) which is constructed from the Borel-Serre compactification of the locally symmetric space $X_{Iw^+}(\C)$ and computes  Betti \textit{homology} $H_*(X_{Iw^+}(\C),A_{\ka_{\U}}^{r}(\tt,R_{\U}))$ (resp. Betti \textit{cohomology} $H^*(X_{Iw^+}(\C),D_{\ka_{\U}}^{r}(\tt,R_{\U}))$). See \cite{hansen2017universal} section 2 for more details. We define \[C^{\ka_{\U},r}_{tot}:=\bigoplus_{i}C_i(Iw^+,A_{\ka_{\U}}^{r}(\tt,R_{\U})),\] \[C^{tot}_{\ka_{\U},r}:=\bigoplus_{i}C^i(Iw^+,D_{\ka_{\U}}^{r}(\tt,R_{\U})).\]

Due to \cite{hansen2017universal} section 2.2, $A_{\ka_{\U}}^{r}(\tt,R_{\U})$ is \textit{orthonormalizable} thus $C^{\ka_{\U},r}_{tot}$ is also orthonormalizable $R_{\U}[\frac{1}{p}]$-module. And there are natural Hecke operators on it and the action of $U_p$ (Hecke operator at $p$) is compact. Let $F_{\ka_{\U},r}^{oc}\in R_{\U}[\frac{1}{p}][[T]]$ be the resulting Fredholm determinant of $U_p$ acting on it. Then for any $h \in \Q_{\geq0}$, the existence of a slope-$\leq h$ decomposition of $C^{\ka_{\U},r}_{tot}$ is equivalent to the existence of  a slope-$\leq h$ factorization of $F_{\ka_{\U},r}^{oc}$. Moreover, such slope truncation has certain functorial property. If $C^{\ka_{\U},r}_{tot,\leq h}$ is the slope-$\leq h$ submodule of $C^{\ka_{\U},r}_{tot}$ and let $\U_1=(R_{\U_1},\ka_{\U_1})$ be another open weight with $\U_1^{rig}\subset \U^{rig}$, then there is a canonical isomorphism \[ C^{\ka_{\U},r}_{tot,\leq h} \otimes_{R_{\U}[\frac{1}{p}]} R_{\U_1}[\frac{1}{p}] \cong C^{\ka_{\U_1},r}_{tot,\leq h}.\]

\begin{defn}
Let $\U=(R_{\U},\ka_{\U})$ be an open weight and let $h \in \Q_{\geq 0}$. Then pair $(\U,h)$ is called a \textbf{ slope datum }if $F_{\ka_{\U},r}^{oc}$ admits a slope-$\leq h$ factorization.
\end{defn}

According to \cite{chj} proposition 3.3 and 3.4, we have the following result concerning slope truncation of  Betti homology (cohomology) for the Shimura variety:

\begin{prop}
\label{slope truncation functorial}
Let $(\U,h)$ be a slope datum and $(R_{\U_1},\ka_{\U_1})$ be another open weight with $\U_1^{rig} \subset \U^{rig}$.

(1) There is a canonical isomorphism \[H_*(X_{Iw^+}(\C),A_{\ka_{\U}}^{r}(\tt,R_{\U}))_{\leq h}\otimes_{R_{\U}[\frac{1}{p}]} R_{\U_1}[\frac{1}{p}]\cong H_*(X_{Iw^+}(\C),A_{\ka_{\U_1}}^{r}(\tt,R_{\U_1}))_{\leq h}.\]

(2) The dual side, $C^{tot}_{\ka_{\U},r}$ and $H^*(X_{Iw^+}(\C),D_{\ka_{\U}}^{r}(\tt,R_{\U}))$, also admit slope-$\leq h$ decomposition, and such slope truncation have similar functorial property as above.
\end{prop}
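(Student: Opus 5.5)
The plan is to reduce everything to the chain-level statement already available for $C^{\ka_{\U},r}_{tot}$ and then pass to (co)homology. The key input is that the slope-$\leq h$ decomposition is functorial and Hecke-stable. First I would check that the Borel--Serre complexes are compatible with base change along $R_{\U}\to R_{\U_1}$. Since $\U_1^{rig}\subset\U^{rig}$ are both open weights, $\ka_{\U_1}$ is the composite of $\ka_{\U}$ with $R_{\U}\to R_{\U_1}$. Via the identification $A^{r}_{\ka_{\U}}(\tt,R_{\U})\cong A^{r}(\tt_0,R_{\U})$, which does not involve the weight, we then obtain a canonical $Iw^+$- and $u_{p,i}$-equivariant isomorphism
\[A^{r}_{\ka_{\U}}(\tt,R_{\U})\widehat{\otimes}_{R_{\U}[\frac1p]}R_{\U_1}[\tfrac1p]\cong A^{r}_{\ka_{\U_1}}(\tt,R_{\U_1}).\]
Because each $C_i$ is a finite direct sum of copies of the coefficient module (finitely many cells of the Borel--Serre compactification), this gives $C^{\ka_{\U},r}_{tot}\widehat{\otimes}R_{\U_1}[\frac1p]\cong C^{\ka_{\U_1},r}_{tot}$ compatibly with $U_p$. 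Consequently the Fredholm determinants agree, $F^{oc}_{\ka_{\U_1},r}$ being the image of $F^{oc}_{\ka_{\U},r}$, and the slope-$\leq h$ factorization of the latter specializes to one of the former.

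For part (1), I would use that the slope-$\leq h$ part $C^{\ka_{\U},r}_{tot,\leq h}$ is a finite projective $R_{\U}[\frac1p]$-module. It is a direct summand of the complex, cut out by a projector that is a polynomial (entire series) in $U_p$ coming from the Riesz theory attached to the factorization. Since $U_p$ commutes with the differentials (it is given by a Hecke correspondence realized at chain level), the slope decomposition is a decomposition of complexes. Hence $H_*(\cdot)_{\leq h}=H_*(C_{\bullet,\leq h})$. The base change isomorphism on $C_{\bullet,\leq h}$, recalled just before the statement, then reduces the claim to the compatibility of homology with $\otimes_{R_{\U}[\frac1p]}R_{\U_1}[\frac1p]$ for a bounded complex of finite projective modules.

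This flatness issue is the main obstacle. Homology does not commute with base change in general; it does here because $R_{\U}[\frac1p]\to R_{\U_1}[\frac1p]$ is flat, being an open immersion of rigid (or adic) spaces. I would verify flatness first, citing the standard fact that rational localizations and open immersions of affinoid/small weights are flat (as in \cite{hansen2017universal}, \cite{chj} Prop.~3.3). Once flatness holds, $H_*(C_{\leq h})\otimes R_{\U_1}[\frac1p]\cong H_*(C_{\leq h}\otimes R_{\U_1}[\frac1p])$ and (1) follows.

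For part (2), I would dualize. The distribution module $D^{r}_{\ka_{\U}}$ is the continuous dual of $A^{r}_{\ka_{\U}}$, and $C^{tot}_{\ka_{\U},r}$ is the continuous dual of $C^{\ka_{\U},r}_{tot}$ with the transposed $U_p$. So $C^{tot}_{\ka_{\U},r}$ inherits a slope decomposition whose slope-$\leq h$ part is $\mathrm{Hom}_{R_{\U}[\frac1p]}(C^{\ka_{\U},r}_{tot,\leq h},R_{\U}[\frac1p])$, which is finite projective. The same flat-base-change argument then gives the functoriality for $H^*(X_{Iw^+}(\C),D^{r}_{\ka_{\U}}(\tt,R_{\U}))_{\leq h}$, using that $\mathrm{Hom}$ out of a finite projective module commutes with base change.
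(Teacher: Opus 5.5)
Your proposal is correct and is essentially the standard argument behind the results the paper simply cites (\cite{chj}, Propositions 3.3--3.4, going back to \cite{hansen2017universal}). Its steps are: base change of the Borel--Serre complexes and of the Fredholm determinants; the slope-$\leq h$ part as a Hecke-stable, finite projective direct summand of complexes; flatness of $R_{\U}[\frac{1}{p}]\to R_{\U_1}[\frac{1}{p}]$ so that (co)homology commutes with base change; and dualizing the finite projective slope-$\leq h$ part to handle the distribution side, which is how the paper's cited proof goes, the paper giving no argument of its own.
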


What's more, we can vary the open weight $\U$ and glue the Fredholm determinant  into an entire power series $F_{W}^{oc}\in \mathscr{O}_{\W}(\W)\{\{T\}\}$. It is independent of radius $r$ (see \cite{hansen2017universal} proposition 3.1.1).

On the other hand, we can also consider the space of overconvergent automorphic forms $M_{Iw^+,w}^{\ka_{\U}}=H^0(\X_{Iw^+,w},\underline{\omega}^{\ka_{\U}}_{w})$. It has property \textit{(Pr)} in the sense of \cite{buzzard2007} and also has Hecke actions. The $U_p$ operator is also compact. Then   we obtain the Fredholm determinant $F^{mf}_{\ka_{\U},w}$. Similarly we can glue them together when vary affinoid weights. Further taking limit over $w$ we get the entire power series $F^{mf}_{\W}\in\mathscr{O}_{\W}(\W)\{\{T\}\}\widehat{\otimes}\C_p$.

Now we proceed to construct \textbf{spectral variety}, which is the first step to construct eigenvariety. See \cite{ludwig2024spectral} for more details about the general machine of eigenvariety.

Define \[F_{\W}:=F_{\W}^{oc}F_{\W}^{mf}\in \mathscr{O}_{\W}(\W)\{\{T\}\}\widehat{\otimes}\C_p,\] which is still a Fredholm series. Let $\A_{\W}^1:=\W \times_{Spa(\Q_p,\Z_p)}\A_{\C_p}^1$. The \textbf{spectral variety} $\mathcal{S}$ is the zero locus of $F_{\W}$ inside $\A_{\W}^1$.

\begin{defn}
Let $\U$ be an open weight and $\U_{\C_p}^{rig}$ denote the base change to $Spa(\C_p,\mathscr{O}_{\C_p})$. Define the adic line $\A_{\U}^1:=\U^{rig}\times \A^1_{\C_p}$. For any $h \in \Q_{>0}$, consider the closed ball $\textbf{B}(0,p^{h})$ of radius $p^{h}$ over $\C_p$ and define $\textbf{B}_{\U,h}:=\U^{rig}\times \textbf{B}(0,p^h)$. Set $\mathcal{S}_{\U,h}:=\mathcal{S} \cap \textbf{B}_{\U,h}$. We call the pair $(\U,h)$ is \textbf{slope-adapted} if the natural map $\mathcal{S}\rightarrow \U_{\C_p}^{rig}$ is finite flat.
\end{defn}

Consider the collection \[Cov(\mathcal{S})=\{\mathcal{S}_{\U,h}: (\U,h)\text{ is slope-adapted} \}.\] Let $Cov_{aff}(\mathcal{S})$ be a sub-collection, consisting of $\mathcal{S}_{\U,h}$ with $\U$ being an affinoid weight. By \cite{hansen2017universal} proposition 4.14, $Cov_{aff}(\mathcal{S})$ forms an open cover for $\mathcal{S}$. Use this cover, we define the following coherent sheaves on $\mathcal{S}$:

\begin{defn}
(1) Recall the limit $D^{\dag}_{\ka_{\U}}(\tt,R_{\U})$, the coherent sheaf $\mathscr{H}^{tot}$ on $\mathcal{S}$ is defined as follow:

\[\mathscr{H}^{tot}(\mathcal{S}_{\U,h}):=(\bigoplus_{i}H^i(X_{Iw^+}(\C),D^{\dag}_{\ka_{\U}}(\tt,R_{\U}))^{\leq h})\widehat{\otimes} \C_p.\]

(2) On the other hand, the coherent sheaf $\mathscr{F}^{\dag}_{Iw^+}$ on $\mathcal{S}$ is defined by \[\mathscr{F}^{\dag}_{Iw^+}(\mathcal{S}_{\U,h}):=M_{Iw^+}^{\ka_{\U}+n+1,\leq h}.\]
\end{defn}

The Hecke algebra $\mathbb{T}$ (see section \ref{hecke operator}) acts on both of the coherent sheaves and we get two eigenvarieties:

\begin{defn}
(1) Let $\mathbb{T}^{oc}_{\U,h}$ be the reduced $\mathscr{O}_{\mathcal{S}_{\U,h}}(\mathcal{S}_{\U,h})$-algebra generated by image of $\mathbb{T}\rightarrow End(\mathscr{H}^{tot}(\mathcal{S}_{\U,h}))$, and the resulting coherent sheaf on $\mathcal{S}$ denoted by $\mathscr{S}_{oc}$. Let $\mathbb{T}^{oc,\circ}_{\U,h}$ be the integral closure of $\mathscr{O}_{\mathcal{S}_{\U,h}}(\mathcal{S}_{\U,h})^{\circ}$ inside $\mathbb{T}^{oc}_{\U,h}$, and the resulting coherent sheaf is denoted by $\mathscr{S}_{oc}^{\circ}$.

(2) \textbf{The reduced eigenvariety} $\mathcal{E}^{oc}$ is defined to be the relative adic space $Spa_{\mathcal{S}}(\mathscr{S}_{oc},\mathscr{S}_{oc}^{\circ})$.

\end{defn}

\begin{defn}
(1) Let $\mathbb{T}^{mf}_{\U,h}$ be the reduced $\mathscr{O}_{\mathcal{S}_{\U,h}}(\mathcal{S}_{\U,h})$-algebra generated by image of $\mathbb{T}\rightarrow End(\mathscr{F}^{\dag}_{Iw^+}(\mathcal{S}_{\U,h}))$, and the resulting coherent sheaf on $\mathcal{S}$ denoted by $\mathscr{S}_{mf}$. Let $\mathbb{T}^{oc,\circ}_{\U,h}$ be the integral closure of $\mathscr{O}_{\mathcal{S}_{\U,h}}(\mathcal{S}_{\U,h})^{\circ}$ inside $\mathbb{T}^{mf}_{\U,h}$, and the resulting coherent sheaf is denoted by $\mathscr{S}_{mf}^{\circ}$.

(2) \textbf{The equidimensional eigenvariety }$\mathcal{E}^{mf}$ is defined to be the equidimensional locus inside the relative adic space $Spa_{\mathcal{S}}(\mathscr{S}_{mf},\mathscr{S}_{mf}^{\circ})$.

\end{defn}

\begin{rem}

Previously, through the methods of \cite{AIP2015}, Xu Shen constructed the eigenvariety in \cite{shen2016}, which is equidimensional. Our $\mathcal{E}_{0}^{mf}$ is the base change of his construction to $\C_p$ (with reduced weights, i.e. $n$-dimensional).

\end{rem}

The later eigenvariety is  "smaller" than the previous one:

\begin{prop}
There is a natural closed embedding $\mathcal{E}^{mf}\hookrightarrow \mathcal{E}^{oc}$.
\end{prop}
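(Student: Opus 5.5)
The plan is to build the closed embedding locally on the spectral variety $\mathcal{S}$ and then glue. Both eigenvarieties are relative adic spectra over $\mathcal{S}$: $\mathcal{E}^{oc}=Spa_{\mathcal{S}}(\mathscr{S}_{oc},\mathscr{S}_{oc}^{\circ})$, and $\mathcal{E}^{mf}$ is the equidimensional locus of $Spa_{\mathcal{S}}(\mathscr{S}_{mf},\mathscr{S}_{mf}^{\circ})$. A closed immersion over $\mathcal{S}$ is therefore the same as a surjection of coherent $\mathscr{O}_{\mathcal{S}}$-algebras $\mathscr{S}_{oc}\twoheadrightarrow\mathscr{S}'_{mf}$ that is compatible with the integral structures. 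Here $\mathscr{S}'_{mf}$ denotes the quotient of $\mathscr{S}_{mf}$ cutting out the equidimensional part. It is enough to construct, for each slope-adapted affinoid pair $(\U,h)$, a surjection $\mathbb{T}^{oc}_{\U,h}\twoheadrightarrow\mathbb{T}^{mf}_{\U,h}$ of $\mathscr{O}(\mathcal{S}_{\U,h})$-algebras sending $T\mapsto T$ for $T\in\mathbb{T}$, and to check compatibility with restriction to smaller $\mathcal{S}_{\U_1,h_1}$. Compatibility with restriction should follow from the functoriality of slope truncation (proposition \ref{slope truncation functorial}) and its analogue for $M^{\ka_{\U}}_{Iw^+,w}$.

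To get the surjection, I would use the Eichler-Shimura map of theorem \ref{overconvergent es}, glued over the weight (this is the functoriality in $\ka_{\U}$). Restricted to slope $\leq h$ parts, it gives a Hecke-equivariant map
\[ES_{\U,h}:\mathscr{H}^{tot}(\mathcal{S}_{\U,h})\rightarrow \mathscr{F}^{\dag}_{Iw^+}(\mathcal{S}_{\U,h})(-n),\]
which is nonzero only on degree $n$. If this map were surjective, every element of $\mathbb{T}$ acting as zero on the source would act as zero on the target. That would give $\mathbb{T}^{oc}_{\U,h}\twoheadrightarrow\mathbb{T}^{mf}_{\U,h}$, since both algebras are generated by the image of $\mathbb{T}$. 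The Tate twist does not affect Hecke actions.

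Surjectivity is the main obstacle, because the paper does not claim it in families. My fallback is to argue only on supports, which suffices for a closed embedding of reduced spaces. Since $\mathbb{T}^{mf}_{\U,h}$ is reduced and, by restricting to the equidimensional locus, has no embedded components, it is enough to show two things. First, every point $x$ of $\mathcal{E}^{mf}$ lies in $\mathcal{E}^{oc}$. Second, the kernel of $\mathbb{T}\to\mathbb{T}^{oc}_{\U,h}$ vanishes at all points of a Zariski-dense subset of $\mathcal{E}^{mf}$.

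For the second point, I would use the classical points in $\mathcal{E}^{mf}$ of dominant algebraic weight $\ka$ with small slope relative to $\ka$. By a classicality theorem (Shen's, in the comparison of section \ref{compare forms}), these are Zariski dense in the equidimensional $\mathcal{E}^{mf}$. At such a point, theorem \ref{thm p adic and classical es} factors $ES_{\ka}$ through the classical map $ES^{alg}_{\ka}$, and $ES^{alg}_{\ka}$ is surjective. A classicality theorem for overconvergent cohomology in small slope (in the style of Stevens and Hansen) identifies the slope-$\leq h$ part of $H^n(D^r_{\ka})$ with $H^n(\Vc_{\ka})^{\leq h}$. Together these make the Hecke eigensystem of every small-slope classical form appear in $\mathscr{H}^{tot}$. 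Its kernel ideal therefore contains $\ker(\mathbb{T}\to\mathbb{T}^{oc}_{\U,h})$.

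By density and reducedness, the kernel of $\mathbb{T}\to\mathbb{T}^{oc}_{\U,h}$ then maps to zero in $\mathbb{T}^{mf}_{\U,h}$. This is exactly the required surjection, and gluing finishes the proof. The step I expect to require the most care is checking the small-slope bounds: the $U_p$-normalisations must be matched against the weight shift $\ka\mapsto\ka+n+1$.
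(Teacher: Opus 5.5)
Your argument is correct and follows essentially the paper's route, which defers to Proposition 6.2.6 of \cite{drw}. That route uses small-slope classicality on both the coherent and the cohomological side, the surjective classical Eichler--Shimura map to move eigensystems from forms to cohomology, and Hansen's interpolation theorem (Thm.~5.1.2 of \cite{hansen2017universal}); your density-and-reducedness argument over each $\mathcal{S}_{\U,h}$ reproves that theorem by hand. Citing it would also cover two points you pass over quickly: the classical points must be Zariski dense in every affinoid piece $\mathcal{E}^{mf}_{\U,h}$ (``very Zariski dense'', which needs equidimensionality and finiteness over $\U$, not just global density), and an eigensystem of $H^n(D^r_{\ka})^{\leq h}$ gives a point of the family sheaf $\mathscr{H}^{tot}$ only via Hansen's Tor spectral sequence, which is why total cohomology is used.
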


The proof is the same as proposition 6.2.6 in \cite{drw}. The key ideas is to apply classicality results (small slope implies classicality) and $p$-adic functoriality (theorem 5.1.2 in \cite{hansen2017universal}).

\begin{rem}
In next paper of this series,  we will further consider the higher Coleman theory  (see \cite{bp2021higher}) and put all degree together, then the resulting eigenvariety should be the same as the eigenvariety produced from overconvergent cohomology.
\end{rem}

We identify $\mathcal{E}^{mf}$ with its image inside $\mathcal{E}^{oc}$ and denote it by $\mathcal{E}$. It is the desired \textbf{eigenvariety} and is equipped with the \textbf{weight map} $wt:\mathcal{E} \rightarrow \mathcal{W}$ and the natural map $\pi:\mathcal{E} \rightarrow \mathcal{S}$.

In summary, we have the following commutative diagram: \[\xymatrix{
\mathcal{E} \ar[r]^{\pi} \ar[dr]^{wt} & \mathcal{S} \ar[d]^{wt_{\mathcal{S}}}\\
& \W}.\]

\subsection{Comparison map between sheaves on the eigenvariety}
In this section we glue the $p$-adic Eichler-Shimura map into a comparison map between corresponding coherent sheaves on the eigenvariety $\mathcal{E}$.

Recall the overconvergent cohomology group \[\mathfrak{OC}^{r}_{\ka_{\U},\C_p}=H^n(X_{Iw^+}(\C),D_{\kappa_{\mathcal{U}},j}^{r}(\tt, R_{\mathcal{U}}))\widehat{\otimes}\C_p,\] we define the limit \[\mathfrak{OC}^{\dag}_{\ka_{\U},\C_p}:=\varprojlim_{r}\mathfrak{OC}^{r}_{\ka_{\U},\C_p}.\]

Let $(R_{\U},\ka_{\U})$ be a small open weight and recall the overconvergent Eichler-Shimura map \[ES_{\ka_{\U}}:\mathfrak{OC}^{r}_{\ka_{\U},\C_p} \rightarrow  M^{\ka_{\U}+n+1}_{Iw^+,w}(-n).\] If $(\U,h)$ is slope-adapted, then the Hecke equivariant property of $ES_{\ka_{\U}}$ induces a $\C_p\widehat{\otimes} R_{\U}$-linear map on the slope truncation: \[ES_{\ka_{\U}}^{\leq h}: \mathfrak{OC}^{r,\leq h}_{\ka_{\U},\C_p} \rightarrow  M^{\ka_{\U}+n+1,\leq h}_{Iw^+,w}(-n).\] Because the slope truncation is functorial respect to weight (see proposition \ref{slope truncation functorial}), the comparison map $ES_{\ka_{\U}}^{\leq h}$ is also functorial for weights.

For each $\mathcal{S}_{\U,h}$, let $\mathcal{E}_{\U,h}$ be the preimage under $\pi:\mathcal{E} \rightarrow \mathcal{S}$. On the eigenvariety $\mathcal{E}$, we consider the following two coherent sheaves: \[\mathscr{OC}^{\dag}(\mathcal{E}_{\U,h}):= \mathfrak{OC}^{r,\leq h}_{\ka_{\U},\C_p}\] and \[\mathscr{F}^{\dag}_{Iw^+}(-n)(\mathcal{E}_{\U,h}):=M^{\ka_{\U}+n+1,\leq h}_{Iw^+,w}(-n).\]

Because $ES_{\ka_{\U}}^{\leq h}$ is  functorial for weights and we can glue them. In conclusion we obtain the following result:

\begin{thm}
\label{es over eigenvariety}
There is a comparison map \[\mathcal{ES}:\mathscr{OC}^{\dag} \rightarrow \mathscr{F}^{\dag}_{Iw^+}(-n)\] between coherent sheaves over $\mathcal{E}$ such that for each slope-adapted pair $(\U,h)$, the map $\mathcal{ES}(\mathcal{E}_{\U,h})$ is the overconvergent Eichler-Shimura map $ES_{\ka_{\U}}^{\leq h}$.

\end{thm}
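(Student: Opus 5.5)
The plan is to build $\mathcal{ES}$ locally on the cover $\{\mathcal{E}_{\U,h}\}$, where $\mathcal{S}_{\U,h}$ runs over $Cov_{aff}(\mathcal{S})$ (or $Cov(\mathcal{S})$), and then glue. The locally defined maps are the truncated maps $ES_{\ka_{\U}}^{\leq h}$.

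\textbf{Step 1: the local maps.} Fix a slope-adapted pair $(\U,h)$ and choose $r>1+r_{\U}$ and $w>1+r_{\U}$. The map $ES_{\ka_{\U}}$ of Theorem \ref{overconvergent es} commutes with $U_p=\prod_i U_{p,i}$. Both sides carry slope-$\leq h$ decompositions: on the source by Proposition \ref{slope truncation functorial}, and on the target from the Fredholm factor $F^{mf}$ of $F_{\W}$, which divides the polynomial cutting out $\mathcal{S}_{\U,h}$. Writing $Q_h$ for the slope-$\leq h$ factor, the projector onto slope $\leq h$ is a norm-limit of polynomials in $U_p$. Hence any $U_p$-equivariant continuous map preserves the decompositions and restricts to
\[ES_{\ka_{\U}}^{\leq h}: \mathfrak{OC}^{r,\leq h}_{\ka_{\U},\C_p} \rightarrow M^{\ka_{\U}+n+1,\leq h}_{Iw^+,w}(-n).\]
Tame equivariance makes this map $\mathbb{T}$-linear. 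It is therefore a map of $\mathbb{T}_{\U,h}\otimes\mathscr{O}(\mathcal{S}_{\U,h})$-modules, where $\mathbb{T}_{\U,h}$ acts through the algebras defining $\mathcal{E}^{oc}$ and $\mathcal{E}^{mf}$.

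This means that, after restricting along $\mathcal{E}=\mathcal{E}^{mf}\hookrightarrow\mathcal{E}^{oc}$, the map gives a morphism of coherent sheaves on the affinoid $\mathcal{E}_{\U,h}$. One has to check that both sides are modules over $\mathscr{O}(\mathcal{E}_{\U,h})$. For the target this holds by definition. For the source, I would take $\mathscr{OC}^\dag$ to mean the pullback to $\mathcal{E}$, which is legitimate since $\mathcal{E}$ is closed in $\mathcal{E}^{oc}$.

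\textbf{Step 2: independence of auxiliary choices.} For $r'\geq r$ and $w'\geq w$, the maps $ES$ are compatible with the transition maps $D^{r}\hookrightarrow D^{r'}$ and with restriction $\X_{Iw^+,w}\supset\X_{Iw^+,w'}$. This is visible from the defining formula $\mu\otimes\delta\mapsto\delta f_{\mu,\z}$, since the function $e_{hwt,\ka_{\U}}(w_0\gamma^{-1}J(\z,t)w_0)$ does not depend on $r$ or $w$. On slope-$\leq h$ parts these transition maps are isomorphisms, by the standard argument using compactness of $U_p$ and the factorization of $U_p$ through restriction. So $ES^{\leq h}_{\ka_{\U}}$ is independent of $(r,w)$.

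\textbf{Step 3: functoriality in the weight and gluing.} This is the main obstacle. For $\U_1^{rig}\subset\U^{rig}$ with both $(\U,h)$ and $(\U_1,h)$ slope-adapted, one must show that the square formed by $ES_{\ka_{\U}}^{\leq h}\otimes R_{\U_1}$ and $ES_{\ka_{\U_1}}^{\leq h}$ commutes under the base-change isomorphisms. On the source these isomorphisms come from Proposition \ref{slope truncation functorial}; on the target they come from the analogous base change for perfectoid forms.

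I would argue at the level of sheaves before taking cohomology. Specialisation $R_{\U}\to R_{\U_1}$ maps $\ka_{\U}$ to $\ka_{\U_1}$. This induces compatible maps $\mathscr{OD}^r_{\ka_{\U}}\to\mathscr{OD}^r_{\ka_{\U_1}}$ and $\widehat{\underline\omega}^{\ka_{\U}}_w\to\widehat{\underline\omega}^{\ka_{\U_1}}_w$. The map $\eta$ commutes with them because $e_{hwt,\ka}$ depends on $\ka$ only through composition with the characters $\ka_{\U,i}$. The projection formula and Kodaira--Spencer step of Proposition \ref{proposition proetale etale} is also natural in the coefficient ring. One then needs the target base-change map to be an isomorphism on slope $\leq h$ parts. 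I would deduce this from the fact that both sides are finite projective over $\mathscr{O}(\mathcal{S}_{\U,h})$ and the restriction of $\mathcal{S}_{\U,h}$ to $\U_1$ is $\mathcal{S}_{\U_1,h}$, so the Riesz projectors are compatible; this mirrors \cite{chj} and \cite{drw}.

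Overlaps $\mathcal{S}_{\U,h}\cap\mathcal{S}_{\U',h'}$ are covered by members of the form $\mathcal{S}_{\U'',h''}$ with $\U''\subset\U\cap\U'$. Also, changing $h$ to a smaller slope-adapted $h''$ only passes to a direct summand, compatibly with $ES$. Together these give the cocycle condition. The local maps therefore glue to $\mathcal{ES}:\mathscr{OC}^\dag\to\mathscr{F}^\dag_{Iw^+}(-n)$, and by construction its sections over $\mathcal{E}_{\U,h}$ are $ES_{\ka_{\U}}^{\leq h}$.
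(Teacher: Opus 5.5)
Your proposal is correct and takes the paper's route: Hecke-equivariance of $ES_{\ka_{\U}}$ gives the map on slope-$\leq h$ parts, functoriality of slope truncation in the weight (Proposition \ref{slope truncation functorial}) makes these local maps compatible, and they glue over the slope-adapted cover. You add more detail than the paper does — independence of $r$ and $w$, compatibility of $\eta_{\ka_{\U}}$ with specialisation of the weight, how to read $\mathscr{OC}^{\dag}$ on the closed subspace $\mathcal{E}\subset\mathcal{E}^{oc}$, and the cocycle check on overlaps — but the argument uses no idea beyond the paper's.
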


We can say a little more about this map $\mathcal{ES}$. Denote by $\mathscr{I}m$ and $\mathscr{K}er$ the image and the kernel of $\mathcal{ES}$. We get the following exact sequence over $\mathcal{E}$: \[0\rightarrow \mathscr{K}er \rightarrow \mathscr{OC}^{\dag} \rightarrow  \mathscr{I}m \rightarrow 0,\] and the following result: 

\begin{prop}
Let $E^{lof}$ be the locus of $E$ where $\mathscr{K}er$, $\mathscr{OC}^{\dag}$ and $\mathscr{I}m$ are locally free. Then the above exact sequence splits locally over $E^{lof}$.
\end{prop}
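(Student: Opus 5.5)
The statement is local on $\mathcal{E}^{lof}$, so I will fix a point $x \in \mathcal{E}^{lof}$ and an affinoid open neighbourhood $\mathcal{V} = Spa(A,A^{\circ}) \subset \mathcal{E}^{lof}$ of $x$. I choose $\mathcal{V}$ inside some $\mathcal{E}_{\U,h}$ with $(\U,h)$ slope-adapted, and small enough that $\mathscr{K}er$, $\mathscr{OC}^{\dag}$ and $\mathscr{I}m$ are all free of finite rank over it. This is possible because a locally free coherent sheaf on an affinoid is given by a finitely generated projective module, which becomes free after shrinking to a smaller affinoid neighbourhood (for instance a rational subdomain).

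Over $\mathcal{V}$, Theorem \ref{es over eigenvariety} identifies $\mathcal{ES}$ with the restriction of $ES_{\ka_{\U}}^{\leq h}$, base changed along $\mathscr{O}(\mathcal{E}_{\U,h}) \to A$. Coherent sheaves on affinoids correspond to finite modules via Kiehl's theorem, and taking sections over an affinoid is exact on coherent sheaves. Hence the exact sequence of sheaves gives an exact sequence of finite $A$-modules
\[0 \rightarrow \mathscr{K}er(\mathcal{V}) \rightarrow \mathscr{OC}^{\dag}(\mathcal{V}) \rightarrow \mathscr{I}m(\mathcal{V}) \rightarrow 0.\]
Since $\mathscr{I}m(\mathcal{V})$ is projective (indeed free), the surjection onto it admits an $A$-linear section $s$. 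The section $s$ then determines a splitting $\mathscr{OC}^{\dag}(\mathcal{V}) \cong \mathscr{K}er(\mathcal{V}) \oplus \mathscr{I}m(\mathcal{V})$. Passing back to coherent sheaves via $\widetilde{(\,\cdot\,)}$ on $\mathcal{V}$, we obtain the required splitting of sheaves over $\mathcal{V}$. As the $\mathcal{V}$ cover $\mathcal{E}^{lof}$, this gives the local splitting asserted in the statement.

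The argument mostly needs care in two places.

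1. The image sheaf $\mathscr{I}m$ must be coherent, and its sections over $\mathcal{V}$ must be the image of the map on sections. This holds because $\mathscr{OC}^{\dag}$ and $\mathscr{F}^{\dag}_{Iw^+}(-n)$ are coherent, and the image of a morphism of coherent sheaves on an affinoid is computed on global sections.
2. I must make sure that shrinking $\mathcal{V}$ keeps it affinoid, so that Kiehl's equivalence applies. This is why I work with rational subdomains.

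One could try to assert only that $\mathscr{I}m$ is locally free. That is in fact the only hypothesis the argument uses, and the local freeness of $\mathscr{K}er$ then follows from the splitting. The step I regard as the main technical point is item 1: identifying the sheaf-theoretic exact sequence with the module-theoretic one on each $\mathcal{E}_{\U,h}$. Once that is settled, the splitting is just projectivity.
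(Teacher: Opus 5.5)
Your module-theoretic argument is correct, but it only proves that the sequence splits locally as a sequence of $\mathscr{O}_{\mathcal{E}}$-modules. That is a triviality: as you noticed yourself, it uses nothing beyond local freeness of $\mathscr{I}m$. It is not what the proposition is about. The three sheaves carry $Gal_{\Q_p}$-actions (and Hecke actions), and $\mathcal{ES}$ is $Gal_{\Q_p}$-equivariant by Theorem \ref{overconvergent es}. The intended splitting is the one of Theorem 6.3.2 of \cite{drw}, which the paper cites, and which is a family version of the Hodge--Tate (Faltings) decomposition. It is a splitting by a Galois-stable, and hence also Hecke-stable, complement of $\mathscr{K}er$ in $\mathscr{OC}^{\dag}$.

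The $A$-linear section $s$ you choose has no reason to commute with $Gal_{\Q_p}$, and projectivity cannot repair this. For example, take $\C_p e_1\oplus \C_p e_2$ with the semilinear action $g(e_1)=e_1$ and $g(e_2)=e_2+\log\chi(g)\,e_1$, where $\chi$ is the cyclotomic character. This is a non-split extension of $\C_p$ by $\C_p$ as Galois modules, although all the modules involved are free. This is also why the hypotheses that $\mathscr{K}er$ and $\mathscr{OC}^{\dag}$ are locally free are in the statement, even though your argument never uses them.

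The missing ingredient is Sen theory in families, which is exactly what the paper invokes (Proposition 2.3 of \cite{Kisin2003}). Local freeness of all three sheaves lets you attach, over a small affinoid in $\mathcal{E}^{lof}$, Sen operators $\Theta$ to $\mathscr{K}er$, $\mathscr{OC}^{\dag}$ and $\mathscr{I}m$. These are compatible with the maps in the sequence and commute with the Galois and Hecke actions.

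On $\mathscr{I}m\subset \mathscr{F}^{\dag}_{Iw^+}(-n)$, $\Theta$ acts by the scalar $c$ determined by the Tate twist $(-n)$. Hence $\Theta-c$ maps $\mathscr{OC}^{\dag}$ into $\mathscr{K}er$. The argument then separates $\mathscr{K}er$ from $\mathscr{I}m$ by Sen weights. When the Sen polynomial of $\mathscr{K}er$ is prime to $T-c$, i.e.\ $\Theta-c$ is invertible on $\mathscr{K}er$, the submodule $\ker(\Theta-c)\subset \mathscr{OC}^{\dag}$ is a canonical Galois- and Hecke-stable complement to $\mathscr{K}er$ that maps isomorphically onto $\mathscr{I}m$.

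Your preliminary steps (Kiehl's theorem, exactness of sections over affinoids, shrinking so the modules become free) are fine as bookkeeping. The equivariant splitting is where the actual proof lies, and it is absent from your proposal.
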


See theorem 6.3.2 in \cite{drw} for the proof. The key technique is to apply Sen operators in $p$-adic Galois representations to split this sequence (due to proposition 2.3 in \cite{Kisin2003}).

\begin{rem}
In next paper of this series, I will further study $\mathscr{K}er$. It is related with higher Coleman theory, just like \cite{diao2025overconvergent} in $GSp_4$ case. And such $p$-adic Eichler-Shimura theory has applications to ramification locus of the weight map.
\end{rem}

\section{Further development}
\label{section7}

\subsection{General unitary Shimura varieties}
The results of this paper admit several natural generalizations, which we plan to pursue in future papers of this series.

One direction is to establish a refined $p$-adic Eicher-Shimura comparison theory. As mentioned earlier, the kernel of the overconvergent Eichler-Shimura map $ES_{\ka_U}$ is "large". In the envisioned "full" $p$-adic Eichler-Shimura theory, the overconvergent cohomology should admit a filtration whose graded pieces are related to higher Coleman theory introduced in \cite{bp2021higher}.  In a subsequent paper, we aim to develop this refined theory and generalize the results of \cite{diao2025overconvergent} (on $GSp_4$) to our unitary setting.

Another natural direction is to extend our results to more general unitary Shimura varieties, including those of signature $(a,b)$  and allowing for non-compact situations. The main ideas of this paper carry over, but they require substantial technical modifications. The case of general signature is significantly more subtle and involves heavier notation; in particular, a delicate "\textit{switch}" process (see section \ref{classical es})  must be handled carefully. Regarding compactifications, we can draw on the general results of \cite{drw}. These technical challenges, while non-trivial, can be overcome with additional work. We plan to address these details in  later papers of this series.

\subsection{Other arithmetic applications}

As mentioned in the introduction, our work is motivated by the study of eigenvarieties and $p$-adic $L$-functions.  

One important application is to the ramification locus of the weight map on the eigenvariety-a fundamental question concerning the geometry of eigenvarieties  (see, e.g., \textit{open problem} 1 in section 8.3 of \cite{AIP2015}). Through classical Eichler-Shimura theory, in  \cite{Bellaiche2021} Bellaiche related such ramification locus of eigencurve to $p$-adic adjoint $L$-function for modular forms. More recently, Diao, Rosso and Wu used their $p$-adic Eichler-Shimura theory for $GSp_4$ to obtain certain etale points for the eigenvariety (see corollary 1.2.5 of \cite{diao2025overconvergent}).  In a subsequent  paper, we aim to establish analogous results in our unitary setting.

On the other hand, $p$-adic Eichler-Shimura theory also has significant arithmetic applications to $p$-adic $L$-functions and related topics such as the Bloch-Kato conjectures and Iwasawa theory.

For example, in his ICM 2022 report \cite{loeffler2022icm} (see in particular Section 7), Loeffler outlines an approach to the Bloch-Kato conjectures for $GSp_4$ that involves two types of $p$-adic $L$-functions: one arising from the etale cohomology of Shimura varieties, and the other from their coherent cohomology. A key ingredient is the comparison between these two $p$-adic $L$-functions, which requires a suitable $p$-adic Eichler-Shimura  map and is often beyond the natural scope of classical comparison theory.   Another illustrative example is the work on $p$-adic Artin formalism \cite{Buyuk2025} (see in particular section 3.7), whereas $p$-adic Eichler-Shimura maps  again plays an essential role by interpolating classical comparison maps into a $p$-adic family version.

 These examples suggest that our results may open the door to analogous arithmetic applications for unitary groups, particularly in the context of $p$-adic $L$-functions.  We plan to explore these directions in the future.

\bibliographystyle{plain}
\bibliography{reference}

\end{document}